\newtheorem{theorem}{Theorem}
\newtheorem{remark}[theorem]{Remark}
\newtheorem{assumption}[theorem]{Assumption}
\newtheorem{proposition}[theorem]{Proposition}
\newtheorem{lemma}[theorem]{Lemma}
\newtheorem{claim}[theorem]{Claim}
\newtheorem{corollary}[theorem]{Corollary}
\newcommand{\step}[1]{\medskip\noindent\textbf{Step #1. }}
\newcommand{\substep}[1]{\medskip\noindent\textit{Substep #1. }}
\title[Sparsity and uniform regularity for ROT]{Sparsity and uniform regularity for regularised optimal transport}
\date{November 2022}
\author[R. S. Gvalani]{Rishabh S. Gvalani$^\ast$}
\address{D-MATH, ETH Z\'Urich, R\"amistraße 101, 8092 Z\"urich, Switzerland}
\email{rgvalani@ethz.ch}
\thanks{$^\ast$Corresponding author}
\author[L. Koch]{Lukas Koch}
\address{Mathematics Department, University of Sussex, Falmer Campus, BN1 9QH Brighton, United Kingdom}
\email{lukas.koch@sussex.ac.uk}
\newcommand{\e}{\varepsilon}
\newcommand\restr[2]{{
  \left.\kern-\nulldelimiterspace 
  #1 
  \vphantom{|} 
  \right|_{#2} 
  }}
\newcommand{\dd}{\mathrm{d}}
\newcommand{\supp}{\mathrm{supp}}
\newcommand{\R}{\mathbb R}
\newcommand{\N}{\mathbb N}
\NewDocumentCommand{\esssup}{o}{%
  \operatorname*{ess\,sup}_{\IfValueT{#1}{#1}}%
}
\begin{document}
\begin{abstract}
We consider regularised quadratic optimal transport with subquadratic polynomial or entropic regularisation. In both cases, we prove interior  Lipschitz-estimates on a transport-like map and interior gradient Lipschitz-estimates on the potentials, under the assumption that the transport map solving the unregularised problem is bi-$C^{1,\alpha}$-regular.  For strictly subquadratic and entropic regularisation, the estimates improve to interior $C^1$ and $C^2$ estimates for the transport-like map and the potentials, respectively. Our estimates are uniform in the regularisation parameter. As a consequence of this, we obtain convergence of the transport-like map (resp. the potentials) to the unregularised transport map (resp. Kantorovich potentials) in $C^{0,1-}_{\mathrm{loc}}$ (resp. $C^{1,1-}_{\mathrm{loc}}$).

Central to our approach are sharp local bounds on the size of the support for regularised optimal transport, which we derive for a general convex, superlinear regularisation term. These bounds are of independent interest and imply global bias bounds for the regularised transport plans. Our global bounds, while not necessarily sharp, improve on the best known results in the literature for quadratic regularisation.
\end{abstract}

\maketitle

\section{Introduction} 
\label{sec:introduction}
In this paper, we study the properties of the minimisers of the following regularised optimal transport (ROT) problem:
\begin{align}\label{problem}
OT_h(\lambda,\mu)\coloneqq\min_{\pi\in \Pi(\lambda,\mu)} \int_{\R^d\times \R^d} |x-y|^2 \, \dd\pi + \e^2 \int_{\R^d \times \R^d} h\left(\frac{\dd\pi}{\dd P}\right) \,\dd P \, ,
\tag{ROT}
\end{align}
where $\lambda, \mu \in \mathcal{M}(\R^d)$ have the same mass, $\Pi(\lambda,\mu)$ is the set of all couplings between $\lambda,\mu$, $\mathcal{M}(\R^d \times \R^d) \ni P \ll \lambda \otimes \mu$ is a reference measure, and $h:[0,\infty) \to \R$ is a sufficiently nice function (see~\cref{assumptionh}), which regularises the quadratic optimal transport problem. $\e$ is the regularisation parameter and we always assume $\e\in (0,1]$. For the purposes of this paper, we fix $P=\lambda \otimes \mu$. Since we are mainly interested in the continuous setting, we will also assume throughout the paper that $\lambda,\mu$ are absolutely continuous with respect to the Lebesgue measure. We refer the reader to \cref{sec:notation} for an overview of our notation.

It is well-known that the optimal transport problem is a convex minimisation problem, but not strictly convex. The idea behind using a regularisation such as in \eqref{problem} is to obtain a strictly convex problem. Amongst other aspects, this may aid numerical computations \cite{Cuturi2013}, improve sample complexity \cite{Genevay2019,Mena2019}, and improve the stability of couplings \cite{Ghosal2022,Bayraktar2025}.

Regularised optimal transport with the choice $h(z)=z\log(z)$ is known as entropic optimal transport and has received a lot of attention in recent years. This stems from the fact that solutions to entropic optimal transport can be easily computed using robust algorithms such as Sinkhorn's algorithm \cite{Cuturi2013,Benamou2015}. Moreover, minimisers of entropic optimal transport have strong smoothness properties: in our setting, the transport plan is a smooth measure in $\R^d\times \R^d$ \cite{Nutz2022}. There is by now a vast literature on entropic optimal transport and we refer the reader to \cite{Peyre2019} for an introduction to computational aspects and \cite{Leonard2014,Nutz2022} for an introduction to the available analytical results. We will focus here on literature that is directly relevant to our results.

A draw-back of entropic regularisation is that minimisers have full support which can make it difficult to compute them efficiently. In particular, this can lead to numerical instabilities \cite{Blondel2018,Zhang2023}. This drawback of the entropic regularisation has lead to other regularisations being suggested, in particular quadratic regularisation with $h(z)=|z|^2$ \cite{Blondel2018,Montacer2018,Lingxiao2020,Zhang2023}. In this case, minimisers have numerically been observed to have sparse support. Analytical results confirming these observations have been obtained only recently in \cite{Wiesel} and in \cite{Nutz} in the scalar setting $d=1$.

This motivates us to focus on the  following choice of regularisation:
\begin{equation}
  \label{eq:lpregularisation}
  h_p(z)\coloneqq \begin{cases}\frac{|z|^p-1}{p-1} \quad& \text{ for } p\in (1,2]\\
  z \log(z) \quad& \text{ for } p=1.
  \end{cases}
\end{equation}
We remark that as $z\to 1$, $h_p(z) \to h_1(z)$. 

Some of our results hold for more general regularisations satisfying the following set of assumptions:
\begin{assumption}\label{assumptionh}
$h\colon \R\to \R$ is continuous, twice differentiable on $(0,\infty)$, strictly convex, bounded below,  and superlinear at $+\infty$.
\end{assumption}
We will see that sparsity of the support is essentially equivalent to the following additional assumption, see the discussion in \cref{rem:support} below.
\begin{assumption}\label{assumptionh2}
$h\colon \R_+\to \R$ satisfies $h'(0)=0$.
\end{assumption}

Under \cref{assumptionh}, \eqref{problem} admits a dual problem of the form
\begin{align}\label{eq:dualFormulaIntro}
\sup_{f\in L^1(\lambda),g\in L^1(\mu)} \int_{\R^d} f\dd\lambda+\int_{\R^d} g\dd\mu- \int_{\R^d\times \R^d} h^\ast\left(\frac{f(x)+g(y)-|x-y|^2}{\e^2}\right)\dd P,
\end{align}
where $h^\ast$ is the convex conjugate of $h$. It turns out that the supremum is assumed and that the minimal choice of potentials $(f_\e,g_\e)$ is unique up to constants. The connection to the minimiser $\pi_\e$ of \eqref{problem} is given through the formula
\begin{align}\label{eq:formulapif}
\frac{\dd\pi_\e}{\dd P} = (h')^{-1}\left(\frac{f_\e(x)+g_\e(y)-|x-y|^2}{\e^2}\right).
\end{align}
Our main object of interest will be the following weighted mean:
\begin{align}\label{eq:defTe}
T_\e(x)\coloneqq \left(\int h''\left(\frac{\dd\pi_\e}{\dd P}\right)^{-1}\dd\mu(y)\right)^{-1}\int y h''\left(\frac{\dd\pi_\e}{\dd P}\right)^{-1}\dd\mu(y).
\end{align}
$T_\e$ is an analogue of the transportation map, since it satisfies a Brenier-type formula:
\begin{align}\label{eq:BenamouBrenier}
T_\e(x) = x - \frac 1 2\nabla f_\e(x).
\end{align}
We refer the reader to \cref{sec:qualitative,sec:regularised_optimal_transport} for precise statements of these facts.

$T_\e$ has emerged as a natural quantity in the context of entropic regularisation \cite{Pooladian2021}, as well as for quadratic regularisation \cite{Nutz,Wiesel}.

While the smoothness of potentials in the entropic setting $h=h_1$ is well-known, available regularity results usually depend exponentially on the regularisation parameter $\e$, c.f. \cite{Nutz2022}. To our knowledge the only regularity results in the context of regularised optimal transport that are independent of $\e$ are in the setting of entropic transport. A uniform $C^{1,\alpha}$-regularity for the potentials is obtained in \cite{Baasandorj2025} for some $\alpha < 1$. Furthermore, in \cite{Gvalani2025}  a uniform large-scale Lipschitz-regularity for the optimal transportation plan (formally equivalent to large-scale gradient Lipschitz-regularity at the level of the potentials) is developed. Finally, $\e$-independent estimates on the modulus of continuity of the entropic potentials played a key role in \cite{Gozlan2025}, where they were used to deduce a generalisation of Caffarelli's contraction theorem for optimal transport.

The main result of this paper is a local Lipschitz bound on $T_\e$, independent of $\e>0$. It applies both in the setting of entropic regularisation and of subquadratic polynomial regularisation.
\begin{theorem}\label{thm:main}
Let $p\in[1,2]$ and $h(z)=h_p(z)$. Fix $\alpha\in (0,1]$. Assume $\lambda,\mu$ are compactly supported probability measures, bounded and bounded away from $0$ on their support, and with $C^{0,\alpha}$-densities. Denote $\Omega = \supp\;\lambda$. Let $T$ be the minimiser of $OT(\lambda,\mu)$ and $T_\e$ be obtained via \eqref{eq:defTe} from a minimiser of $OT_h(\lambda,\mu)$. Assume $T$ is bi-$C^{1,\alpha}$-regular on $\Omega$. Then, for any $\omega\Subset \Omega$, there is $C>0$ depending only on $d,p, \omega$, $\Omega$, $\|\lambda\|_{C^{0,\alpha}(\Omega)}$, $\|\mu\|_{C^{0,\alpha}(T(\Omega)}$ and $\|T\|_{C^{1,\alpha}(\Omega)}$ as well as $\|T^{-1}\|_{C^{1,\alpha}(T(\Omega))}$, such that
\begin{align}
\esssup[x\in \omega] |\nabla T_\e(x)|\leq C.
\end{align}
If $p<2$, $T_\e \in C^1(\omega)$ and the essential supremum may be replaced by a supremum.
\end{theorem}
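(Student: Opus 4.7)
I would derive an exact integral representation of $\nabla T_\e(x)$ as a weighted second moment of the $\mu$-slice of the regularised coupling, and then combine the sharp support bounds on $\pi_\e$ established earlier in the paper with the bi-$C^{1,\alpha}$ structure of $T$ to control this representation uniformly in $\e$.

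\textbf{Step 1: a second-moment identity.} Differentiating the marginal constraint $\int \rho_\e(x,y)\,\dd\mu(y) = 1$ once in $x$ via \eqref{eq:formulapif} recovers \eqref{eq:BenamouBrenier} in the equivalent form $\nabla f_\e(x) = 2(x - T_\e(x))$. A second $x$-differentiation, together with the identity $\nabla_x \rho_\e = 2(y - T_\e)/(\e^{2} h''(\rho_\e))$, would yield for $p\in[1,2)$ the formula
\begin{align*}
\nabla T_\e(x) = \frac{2}{\e^{2} W_\e(x)} \int_{\R^{d}} \frac{-h'''(\rho_\e)}{(h''(\rho_\e))^{3}} (y - T_\e(x))\otimes(y - T_\e(x))\, \dd\mu(y),
\end{align*}
with $W_\e(x) = \int (h'')^{-1}(\rho_\e)\,\dd\mu$. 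Since $-h_p''' \geq 0$ on $[1,2]$, positive semidefiniteness of $\nabla T_\e$ is already visible from this expression.

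\textbf{Step 2: localisation and change of variables.} Next I would apply the sharp sparsity bound developed earlier in the paper, which gives $\supp\rho_\e(x,\cdot)\subset B(T(x), r_\e)$ with $r_\e = O(\e^{\beta(p)})$ for an explicit exponent, together with the qualitative convergence $T_\e\to T$, so that $|y-T_\e(x)|\leq Cr_\e$ on the support. To convert this into an $\e$-independent bound, I would change variables $y=T(z)$: the Jacobian is bounded above and below by the bi-Lipschitz hypothesis, and the Taylor expansion $y - T(x) = \nabla T(x)(z-x) + O(|z-x|^{1+\alpha})$ recasts the weighted second moment as $\nabla T(x) \Sigma_\e(x)\nabla T(x)^{T} + O(r_\e^{2+\alpha})$ for an analogous second moment $\Sigma_\e$ in $z$-space. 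A symmetric computation obtained by differentiating the $\mu$-marginal constraint bounds $\Sigma_\e \leq C\e^{2} I$, and balancing against the $\e^{-2}$ prefactor in Step 1 gives the uniform bound $|\nabla T_\e(x)| \leq C$ for a.e.\ $x\in \omega$, with $C$ depending only on the data listed in the statement.

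\textbf{Step 3: continuity for $p<2$ and the case $p=2$.} For $p\in[1,2)$, the integrand in the identity of Step 1 is continuous in $(x,y)$, with its behaviour near the moving boundary $\{\rho_\e = 0\}$ controlled by the explicit form of $h_p$ (the prefactor $\rho_\e^{3-2p}$ compensates the vanishing of $\rho_\e$ where it is positive). Dominated convergence then gives continuity of $\nabla T_\e$, so $\esssup$ upgrades to $\sup$ and $T_\e\in C^{1}(\omega)$. For $p = 2$, $h''' \equiv 0$ and the formal derivation in Step 1 misses a distributional boundary contribution coming from the free boundary $\{f_\e + g_\e = |x-y|^{2}\}$; one must instead argue directly from the first variation of the free-boundary formulation, and the sparsity bound again controls the resulting surface term. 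This yields only the essential-supremum bound, consistent with the statement.

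\textbf{Main obstacle.} The crux is the cancellation in Step 2: both the $\e^{-2}$ prefactor and the formal limit of the integrand are singular as $\e\to 0$, and only the precise balance between $r_\e$ (from the sharp sparsity bound) and the growth of the kernel $-h'''/(h'')^{3}$ produces an $\e$-independent bound. Transferring this balance from the curved geometry of $\supp\rho_\e$ (which lives near the graph of $T$) into flat estimates on $\nabla T_\e$ forces the change of variables through $T$, and it is here that the bi-$C^{1,\alpha}$ hypothesis on both $T$ and $T^{-1}$ enters essentially. The case $p=2$ is additionally complicated by the free boundary.
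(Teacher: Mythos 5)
Your plan has two genuine gaps, the first of which is structural.

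\textbf{Missing large-scale iteration.} You invoke ``the sharp sparsity bound developed earlier in the paper'' to get $\supp\rho_\e(x,\cdot)\subset B(T(x),r_\e)$ with $r_\e=O(\e^{\beta(p)})$ and then plug this directly into a one-scale computation. But the sharp local bound (\cref{prop:Linfty}) is conditional: it controls the spread by $R\max\bigl(E(\pi,2R,b)^{1/2},E(\pi,2R,b)^{1/(d+2)},\tau(\e)/R\bigr)$ with $b(x)=T(x)-x$, so to conclude a spread of order $\tau(\e)\sim\e^{2/(d(p-1)+2)}$ one must already know that the excess energy $E(\pi_\e,2R,b)$ is small at scales $R$ down to the critical scale $R_c$. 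That smallness is precisely what the large-scale Campanato iteration of \cref{thm:largescale} delivers; it is the bulk of \cref{sec:largescale} and does not appear in your plan. The unconditional spread estimate you could use without the iteration is the global bias bound of \cref{cor:wieselcomparison}, but its exponent $\e^{4/((d(p-1)+2)(d+2))}$ is strictly worse than the critical exponent $\e^{2/(d(p-1)+2)}$ for every $d\geq 1$, and the small-scale computation does not close with that weaker spread. So the proposal cannot dispense with \cref{thm:largescale}.

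\textbf{The covariance step does not work for $p>1$.} You claim the bound $\Sigma_\e\leq C\e^2 I$ on the second moment of the conditional coupling slice ``by a symmetric computation obtained by differentiating the $\mu$-marginal constraint'', and then balance against a factor $\e^{-2}$. This is circular: the symmetric formula just expresses (roughly) $\nabla^2 g_\e$ in terms of the second moment of the $x$-slice, so using it to bound $\Sigma_\e$ presupposes the very gradient-Lipschitz bound you are trying to establish. The naive second moment of the slice scales like $r_\e^2$, not $\e^2$; for $p=1$ these coincide (critical scale $\sim\e$), but for $p>1$ one has $r_\e^2\sim\e^{4/(d(p-1)+2)}\gg\e^2$, so the claimed balance fails. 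Moreover, for $p\neq 1$ the differentiated formula is not a clean second moment: the integrand carries the weight $-h'''(\rho_\e)/(h''(\rho_\e))^3$, whose behaviour near the free boundary $\{\rho_\e=0\}$ is the whole difficulty. The paper's \cref{prop:smallScale} handles this by a different mechanism: it splits $\nabla T_\e$ into two moment ratios, uses the concavity of $\psi$ (from \cref{lem:concavity}) to prove a lower bound $\int\psi_+^{q-2}\dd\mu\gtrsim\psi_m^{q-2+d}/L_\infty^d$, and combines this with the Schr\"odinger identity $\e^{2(q-1)}\lesssim\psi_m^{q-1}L_\infty^d$ to balance the exponents. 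No change of variables $y=T(z)$ and no Taylor expansion of $T$ is used at the small scale; the bi-$C^{1,\alpha}$ hypothesis enters only in the large-scale iteration, to generate good affine competitors. Your Step 2 replaces this genuine mechanism with an unproved covariance bound.

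The remaining points (the representation formula in Step 1, and the observation that $p=2$ requires a separate boundary-term treatment) are broadly in the spirit of \cref{prop:smallScale}, but they do not rescue the two gaps above.
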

We remark that due to \cite{Caf92}, if $\supp\;\lambda$ and $\supp\;\mu$ are convex, then $T$ is bi-$C^{1,\alpha}$ for some $\alpha\in (0,1]$.

As an immediate consequence of the above result and \eqref{eq:BenamouBrenier} (in the rigorous form of \cref{lem:concavity}), we have the following corollary.
\begin{corollary}\label{cor:potentials}
  Under the assumptions of~\cref{thm:main}, we have that for any $\omega\Subset \Omega$, there is $C>0$ depending only on $d,p$ and $\omega$, $\|\lambda\|_{C^{0,\alpha}(\Omega)}$, $\|\mu\|_{C^{0,\alpha}(T(\Omega))}$ and $\|T\|_{C^{1,\alpha}(\Omega)}$ as well as $\|T^{-1}\|_{C^{1,\alpha}(T(\Omega))}$ such that
  \begin{equation}
    \label{eq:potentialbound}
    \esssup[x \in \omega]|\nabla^2 f_\e| \leq C \, .
  \end{equation}
  If $p<2$, then $f_\e \in C^2(\omega)$ and the essential supremum may be replaced by a supremum.
\end{corollary}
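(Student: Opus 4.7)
The proof plan is essentially to differentiate the Brenier-type identity~\eqref{eq:BenamouBrenier}, read in its rigorous form provided by \cref{lem:concavity}, and then invoke \cref{thm:main}. Since the assumptions of the corollary coincide with those of the theorem, we immediately obtain a constant $C>0$ (with the stated dependencies) such that $\esssup_{x\in\omega}|\nabla T_\e(x)|\le C$. Our only task is to translate this into a second-derivative bound on $f_\e$.

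Starting from $T_\e(x)=x-\nabla f_\e(x)$, valid for almost every $x\in\Omega$ (and in a rigorous sense coming from the concavity/dual representation recorded in \cref{lem:concavity}), I would first upgrade this to a weak identity: since $T_\e$ is locally Lipschitz on $\omega$ by \cref{thm:main}, $\nabla f_\e$ is locally Lipschitz on $\omega$ as well, hence $f_\e\in W^{2,\infty}_{\tp{loc}}(\omega)$, and we may differentiate to obtain
\begin{equation*}
\nabla^2 f_\e(x) = I-\nabla T_\e(x) \qquad \text{for a.e. } x\in\omega.
\end{equation*}
Taking essential suprema and using the triangle inequality yields
\begin{equation*}
\esssup_{x\in\omega}|\nabla^2 f_\e(x)| \le 1 + \esssup_{x\in\omega}|\nabla T_\e(x)| \le 1+C,
\end{equation*}
which proves~\eqref{eq:potentialbound} after relabelling the constant.

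For the improvement when $p<2$, I would use the second conclusion of \cref{thm:main}, namely $T_\e\in C^1(\omega)$. Then $\nabla f_\e = \tp{id} - T_\e \in C^1(\omega)$, so $f_\e\in C^2(\omega)$, and the a.e.\ identity above holds pointwise on $\omega$. In particular, the essential supremum in the displayed bound may be replaced by the supremum.

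The only genuinely non-trivial ingredient is the Brenier-type relation~\eqref{eq:BenamouBrenier}; once this is available with the appropriate weak interpretation from \cref{lem:concavity}, the corollary is a direct algebraic consequence of \cref{thm:main} and no further analysis is required. I expect no real obstacle beyond confirming that \cref{lem:concavity} genuinely gives \eqref{eq:BenamouBrenier} in the a.e.\ sense on $\Omega$ (rather than, say, only in a distributional form that would need separate justification before differentiating).
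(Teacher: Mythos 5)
Your proof is correct and follows exactly the approach the paper intends: the paper gives no separate proof, simply stating that the corollary is an immediate consequence of~\cref{thm:main} and \eqref{eq:BenamouBrenier} via \cref{lem:concavity}, and your argument supplies precisely those details. One small remark: the paper's formula \eqref{eq:BenamouBrenier} is stated as $T_\e(x)=x-\nabla f_\e(x)$, while the gradient formula actually derived in \cref{lem:concavity} reads $\nabla f_\e(x)=2x-2T_\e(x)$, so there is a factor-of-two discrepancy in the paper's own notation; this is immaterial here, since either affine relation gives $\nabla^2 f_\e = c_1 I - c_2 \nabla T_\e$ with universal constants and the conclusion follows identically.
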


\eqref{eq:dualFormulaIntro} suggests that the regularity of the potentials is determined by the regularity of the map $(h')^{-1}$. Note that
\begin{align}
(h'_p)^{-1}(\xi) = \begin{cases}
	\exp(\xi-1) \quad& \text{ if } p=1\\
	\frac p {p-1}\max(0,\xi)^\frac 1 {p-1}\quad & \text{ if } p>1.
\end{cases}
\end{align}
In particular, potentials are smooth when $p=1$ and their regularity decreases as $p$ increases. For $p>2$, the potentials are not expected to be twice differentiable. Hence, we expect the range of $p$ in  our theorem to be optimal. 

There are a variety of qualitative convergence results of regularised optimal transport to optimal transport. In the entropic setting, convergence of the transport plans as well as the potentials is well-established, c.f. \cite{Nutz2022}. For more general regularisations, $\Gamma$-convergence of the plans was studied in \cite{Lorenz2022}. In the case of quadratic regularisation, convergence of potentials is also known \cite{Nutz2025}. 
\cref{thm:main} immediately allows to strengthen such quantitative convergence results.
\begin{corollary}\label{cor:convergence}
  Under the assumptions of~\cref{thm:main}, we have that 
  \begin{equation}
    \label{eq:convergenceT}
    T_\e \stackrel{\e \to 0}{\to} T \, ,\quad \textrm{in $C^{0,1-}_{\rm loc}(\Omega)$} \, ,
  \end{equation}
  and 
  \begin{equation}
    \label{eq:convergencepotentials}
    f_\e \stackrel{\e \to 0}{\to} f, \, g_\e \stackrel{\e \to 0}{\to} g \, ,\quad \textrm{in $C^{1,1-}_{\rm loc}(\Omega)$} \, ,
  \end{equation}
  where $f,g$ are Kantorovich potentials of the quadratic transport problem.
\end{corollary}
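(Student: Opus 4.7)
The plan is to combine the $\e$-uniform regularity from \cref{thm:main} and \cref{cor:potentials} with an Arzel\`a--Ascoli argument to obtain subsequential convergence in the claimed spaces, and then to identify the limit using qualitative convergence results for regularised optimal transport already available in the literature.

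For \eqref{eq:convergenceT}, I would first fix $\omega \Subset \Omega$ and observe that $\|T_\e\|_{W^{1,\infty}(\omega)}$ is bounded uniformly in $\e$: the Lipschitz bound is exactly \cref{thm:main}, and the $L^\infty$ bound is immediate from \eqref{eq:defTe}, since $T_\e(x)$ is a $\mu$-weighted average and therefore lies in the convex hull of $\supp\;\mu$. By the compact embedding $W^{1,\infty}(\omega) \hookrightarrow\hookrightarrow C^{0,\beta}(\omega)$ for every $\beta \in (0,1)$, the family $\{T_\e\}$ is precompact in $C^{0,\beta}(\omega)$. Exhausting $\Omega$ by compactly contained open sets and diagonalising, I would extract a subsequence $T_{\e_k} \to \tilde T$ in $C^{0,\beta}_{\mathrm{loc}}(\Omega)$ for every $\beta < 1$. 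To identify $\tilde T = T$, I would appeal to the narrow convergence $\pi_\e \rightharpoonup \pi = (\mathrm{Id}\times T)_\#\lambda$ known from \cite{Lorenz2022,Nutz2022,Nutz2025}: since $T_\e$ is a weighted barycentric projection of $\pi_\e$, combining narrow convergence of the plans with the uniform local regularity of $T_\e$ forces $\tilde T$ to agree $\lambda$-a.e.\ with the barycentric projection of $\pi$, which is $T$; continuity then promotes this to pointwise equality. Uniqueness of the limit upgrades subsequential to full convergence, establishing \eqref{eq:convergenceT}.

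For \eqref{eq:convergencepotentials} I would proceed analogously. Normalise the pair $(f_\e,g_\e)$ so that $f_\e(x_0) = 0$ at a fixed reference point $x_0 \in \Omega$, which removes the additive-constant ambiguity. By \cref{cor:potentials} together with \eqref{eq:BenamouBrenier} and the normalisation, $f_\e$ is uniformly bounded in $W^{2,\infty}_{\mathrm{loc}}(\Omega)$, so Arzel\`a--Ascoli yields precompactness in $C^{1,\beta}_{\mathrm{loc}}(\Omega)$ for every $\beta < 1$. The qualitative convergence of potentials for the entropic case from \cite{Nutz2022} and for the quadratic case from \cite{Nutz2025} (and $\Gamma$-convergence of the dual \eqref{eq:dualFormulaIntro} for subquadratic $h_p$ more generally) identifies any subsequential limit as the uniquely normalised Kantorovich potential $f$; the statement for $g_\e$ follows by symmetry, or alternatively from the identity $g_\e = |y|^2/2 - \cdot$ expressed via $T_\e^{-1}$ in the obvious way.

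The principal obstacle in executing this plan is the identification step: compactness is essentially immediate once the uniform regularity is in hand, but one must check that every subsequential limit genuinely coincides with the unregularised transport map, respectively the Kantorovich pair. Here the explicit barycentric representation \eqref{eq:defTe}, the Brenier-type identity \eqref{eq:BenamouBrenier}, and the choice of normalisation interact with the cited qualitative convergence results to pin down the limit; these are the pieces that need to be put together carefully rather than any new estimate.
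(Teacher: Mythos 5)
Your compactness step is fine, but the identification step for $T_\e$ contains a genuine gap, and your logical ordering is the reverse of the paper's and creates unnecessary trouble. You propose to identify the limit of $T_\e$ by appealing to narrow convergence of $\pi_\e$ and viewing $T_\e$ as ``a weighted barycentric projection of $\pi_\e$.'' But for $p>1$ the weights in \eqref{eq:defTe} are $(h'')^{-1}(\tfrac{\dd\pi_\e}{\dd P}) \sim (\tfrac{\dd\pi_\e}{\dd P})^{2-p}$, \emph{not} the conditional density $\tfrac{\dd\pi_\e}{\dd P}$ itself, so $T_\e$ is not the barycentric projection of $\pi_\e$ and narrow convergence of plans gives no control over this nonlinear functional of the densities. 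Even in the entropic case $p=1$, where $T_\e$ genuinely is the barycentric projection, passing from narrow convergence of $\pi_\e$ to pointwise convergence of the barycentric projection requires additional argument. Likewise, for the potentials you identify the subsequential limit by citing qualitative convergence results from the literature, but no single cited result covers the full range $p\in[1,2]$, and the $\Gamma$-convergence of the dual for general $h_p$ that you invoke is not actually established in any of the given references.

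The paper avoids both difficulties by reversing the order and keeping the argument self-contained. It first obtains an $\e$-uniform Lipschitz bound on $f_\e,g_\e$ directly from \cref{lem:LipschitzRegularityPotential} applied after a rescaling (the pair $(f_\e/\e^2,g_\e/\e^2)$ solves the dual problem for the rescaled cost $|x-y|^2/\e^2$), and extracts a uniformly convergent subsequence $(f_\e,g_\e)\to(f_\ast,g_\ast)$ by Arzel\`a--Ascoli. It then identifies $(f_\ast,g_\ast)$ with the Kantorovich pair by a direct argument: admissibility of the limit for the unregularised dual follows from the normalisation $\int Z_\e\,\dd(\lambda\otimes\mu)=1$ together with the explicit form \eqref{eq:dualForm}, which would force $Z_\e\to+\infty$ on a set of positive measure if the constraint $f_\ast+g_\ast\leq|x-y|^2$ were violated; optimality follows from the elementary chain $\mathcal D\leq\mathcal D_\e\leq\int f_\e\,\dd\lambda+\int g_\e\,\dd\mu$ and passing to the limit. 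Uniqueness of the normalised Kantorovich potentials then pins down the full limit, and interpolation with the $C^{1,1}_{\rm loc}$ bound from \cref{cor:potentials} upgrades uniform convergence to $C^{1,1-}_{\rm loc}$. Finally, the convergence of $T_\e$ is \emph{deduced} from this via the Brenier-type identity $T_\e(x)=x-\nabla f_\e(x)$ of \cref{lem:concavity} together with $T(x)=x-\nabla f(x)$. You should adopt this ordering: prove convergence of the potentials first, and then read off convergence of $T_\e$; attempting to identify the limit of $T_\e$ directly from the plan-level representation \eqref{eq:defTe} does not work for $p>1$.
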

We expect the above convergence results to be particularly relevant for applications.

The starting point of our proof of \cref{thm:main} is the fact that $OT_{h_p}(\lambda,\mu)$ and $OT(\lambda,\mu)\coloneqq OT_0(\lambda,\mu)$ are close in energy. Roughly speaking, 
\begin{align}\label{eq:energyCloseRough}
OT_{h_p}(\lambda,\mu)-OT(\lambda,\mu)\lesssim \e^\frac 4{d(p-1)+2}.
\end{align}
In the setting of the Schr\"odinger bridge problem, which is closely related to the entropic transportation problem, c.f. \cite{Nutz2022}, such results go back to \cite{Mikami2002,Mikami2004}. In fact, in this setting a Taylor expansion of $OT_{h_1}$ in terms of $\e$ can be obtained up to second order under strong regularity assumptions on the potentials \cite{Conforti2021}. In the entropic setting, a second-order Taylor expansion was obtained in \cite{Chizat2020}. Under weaker assumptions on the marginals, expansions to first order or estimates as stated above may still hold. We refer the reader to \cite{Malamut} for a discussion of the available results and an analysis of the separate energy contribution of the transport term and the regularisation term. In the case of polynomial regularisation the stated rates are due to \cite{Eckstein}.

Interpreting this closeness in energy through the lens of viewing minimisers of regularised optimal transport as quasi-minimisers of quadratic optimal transport, we implement a Campanato iteration to propagate this information to smaller scales. This part of our approach is based on the variational approach to optimal transportation first implemented in \cite{Goldman2020}. In \cite{Otto2017}, this approach was generalised to quasi-minimisers at all scales. However, for regularised optimal transport the quasi-minimality information is not available at all scales and in fact at small scales it is not expected to hold. In \cite{Gvalani2025} we implemented a Campanato iteration down to a critical scale for quasi-minimisers for which the energy contribution of long trajectories is controlled and obtained a large-scale $\e$-regularity result for entropic optimal transport down to scale $\e$. Here, we use the machinery of \cite{Gvalani2025} to propagate regularity down to a critical scale $R_c$ for minimisers of $h_p$-regularised transport. Finally, we analyse the structure of $T_\e$ in \eqref{eq:defTe} in order to obtain a regularity result on small scales $R<R_c$.

A scaling analysis of \eqref{problem} suggests that the regularisation becomes dominant at the critical length-scale $R_c\approx \e^\frac 2 {d(p-1)+2}$. Heuristically, minimisers of regularised optimal transport are related to the heat equation with a diffusion term of order $\e$ and Gaussians with variance $\e$, the fundamental solution of the heat equation, see e.g. \cite{Pal2024}. For polynomial regularisation, minimisers are related to the Barenblatt solutions of the porous medium equation with diffusion term $\e \nabla \rho^p$ \cite{Garriz2024}. The expansion rate of the support of the corresponding Barenblatt solutions agrees with the rate in \eqref{eq:energyCloseRough}.

In both the large-scale regime $R>R_c$ and the small-scale regime $R<R_c$ it is crucial for us to have good control over long trajectories. The following bound on the support of regularised minimisers will be crucial.

\begin{theorem}\label{prop:Linfty}
Assume $h\colon \R+\to \R$ satisfies \cref{assumptionh} and \cref{assumptionh2} and fix $L\in(0,\infty)$. Then, there exists an $R_c=R_c(\e,d,L)>0$ such that for all $R> R_c$ the following holds: 

\noindent If 
\begin{equation}
  \label{eq:locallb}
  \inf_{B_r(z)\subset B_{2R}}\frac{\lambda(B_r(z))}{r^d}\eqqcolon D >0 \, ,
\end{equation}
$\pi \in \Pi(\lambda,\mu)$ is a minimiser of \eqref{problem}, and  $b\colon B_{2R}\times \R^d$ is an $L$-Lipschitz function, then there exists a continuous function $\tau(0,\infty)\to (0,\infty)$ with $\tau(t)\to 0$ as $t\to 0$ such that 
\begin{align}
\sup_{(x,y)\in \{B_R \times \R^d \cap \supp\; \pi \}}\frac{|y-x-b(x)|}{R}\lesssim  \max\left(E(\pi,2R,b)^\frac 1 2,E(\pi,2R,b)^\frac 1 {d+2},\tau(\e)/R\right).
\end{align}
The implicit constant depends on $d,D$ and $L$.
If $h(z)=h_p(z)$ for some $p>1$, then the statement holds with $\tau(t)\sim t^\frac 2 {d(p-1)+2}$ and $R_c\sim \tau(\e)$, where the implicit constants depend on $p,d$ and $L$. 
\end{theorem}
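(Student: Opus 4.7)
The plan is to argue by contradiction via the competitor-based variational approach of \cite{Goldman2020, Otto2017}, adapted to regularised transport in \cite{Gvalani2025}. Suppose there exists $(x^\ast, y^\ast) \in \supp\,\pi \cap (B_R \times \R^d)$ with $\ell \coloneqq |y^\ast - x^\ast - b(x^\ast)|$ violating the claimed bound. I would then construct a competitor $\tilde \pi \in \Pi(\lambda,\mu)$ strictly decreasing the regularised functional, contradicting minimality of $\pi$. The critical scale $R_c$ emerges as the threshold below which the regularisation term blocks any such small-scale rearrangement.

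The competitor is built in the spirit of a Goldman--Otto block rearrangement. First I would pick a radius $r \lesssim R$ and use \eqref{eq:locallb} together with a Chebyshev argument against $E(\pi, 2R, b)$ to locate a ball $B_r(z) \subset B_{2R}$ on which simultaneously $\lambda(B_r(z)) \gtrsim D r^d$ and the local mean deviation is small, i.e.\ $\int_{B_r(z) \times \R^d} |y - x - b(x)|^2\, \dd\pi \lesssim E(\pi, 2R, b)\, r^{d+2}$. Next, I would redistribute a mass $m \sim D r^d$ by swapping the long trajectory through $(x^\ast, y^\ast)$ with short trajectories emanating from $B_r(z)$, smoothed by convolution at scale $r$ so that $\dd\tilde\pi/\dd P$ differs from $\dd\pi/\dd P$ only by a perturbation of size $\lesssim m/r^{2d}$ on a product set of volume $r^{2d}$.

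Comparing costs, the quadratic part decreases by at least $c\,\ell^2 m - C r^2 m - C E(\pi, 2R, b)\, r^{d+2}$, while for $h = h_p$ the regularisation cost increases by at most $\e^2 r^{2d} h(m/r^{2d}) \lesssim \e^2 m^p r^{-2d(p-1)}$ (with a logarithmic correction when $p=1$). Substituting $m \sim r^d$ and using minimality of $\pi$ yields
\begin{align*}
\ell^2 r^d \lesssim r^{d+2} + E(\pi, 2R, b)\, r^{d+2} + \e^2 r^{d(2-p)}.
\end{align*}
Optimising $r$ subject to $r \leq R$ and $r \lesssim \ell$ produces the three terms in the statement: $\ell \lesssim R E^{1/2}$ arises from saturating $r \leq R$, $\ell \lesssim R E^{1/(d+2)}$ from the classical Goldman--Otto balance with $r \lesssim \ell$, and $\ell \lesssim \e^{2/(d(p-1)+2)} = \tau(\e)$ from balancing the quadratic gain against the regularisation cost. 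The requirement $R > R_c \sim \tau(\e)$ is precisely what ensures that an admissible radius $r$ exists in the third case.

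The main obstacle I anticipate is bounding the regularisation increment $\e^2 \int [h(\dd\tilde\pi/\dd P) - h(\dd\pi/\dd P)]\, \dd P$ cleanly, without incurring uncontrolled factors of $\|\dd\pi/\dd P\|_\infty$. To handle this I would combine the convexity of $h$ with \cref{assumptionh2}, so that the linearisation of the increment reduces to the $h$-cost of the newly created mass block on a product set of volume $r^{2d}$, where Jensen's inequality is sharp. The entropic case $p=1$, in which $h'(0) = -\infty$, requires more care; I would either use the one-sided estimate $h_1(a+b) - h_1(a) \leq b(\log b + C_a)$ directly or pass to the limit from the polynomial case $p \downarrow 1$.
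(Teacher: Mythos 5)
Your proposal takes a genuinely different route from the paper, and unfortunately it has gaps that I do not think are fixable within the competitor framework you outline.

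The paper's proof is \emph{not} a competitor construction. It rests on the approximate cyclical monotonicity of \cref{prop:monotonicity}: for $(x,y),(x',y')\in\supp\pi$ one has
\begin{equation}
h'\!\left(\frac{\dd\pi}{\dd P}(x,y)\right)+h'\!\left(\frac{\dd\pi}{\dd P}(x',y')\right)\leq\frac{\Delta}{\e^2}+h'\!\left(\frac{\dd\pi}{\dd P}(x',y)\right)+h'\!\left(\frac{\dd\pi}{\dd P}(x,y')\right),
\end{equation}
and, crucially, \cref{assumptionh2} ($h'(0)=0$ together with $h'$ increasing) makes the left-hand side nonnegative, so one gets $(h')^{-1}(-\Delta/(2\e^2))\leq\max\bigl(\frac{\dd\pi}{\dd P}(x',y),\frac{\dd\pi}{\dd P}(x,y')\bigr)$. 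The authors then choose a well-placed set $B(x,y)$ of short trajectories with $\pi(B(x,y))\gtrsim(rR)^d$ by Chebyshev, and a pigeonhole argument (\cref{claim:densitybound}, based on the marginal constraint $\int\frac{\dd\pi}{\dd P}(x',y)\,\dd\lambda(x')=1$) produces a point $(x',y')\in B(x,y)$ with cross-densities $\lesssim(rR)^{-d}$. Feeding this into the monotonicity inequality yields $(rR)^d(h')^{-1}(rR|y-x-b(x)|/\e^2)\lesssim 1$, which is then inverted.

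Your competitor approach has two genuine gaps. First, you propose to \emph{redistribute a mass $m\sim Dr^d$ by swapping the long trajectory through $(x^\ast,y^\ast)$}, but for regularised transport the density $\frac{\dd\pi}{\dd P}$ near $(x^\ast,y^\ast)$ is not controlled from below: membership in $\supp\pi$ only gives $\frac{\dd\pi}{\dd P}\geq 0$, and the $\pi$-mass of any neighbourhood of $(x^\ast,y^\ast)$ can be arbitrarily small. Consequently the quadratic-cost savings of the swap are not $\sim m\ell^2$ but $\sim\pi(\text{nbhd})\cdot\ell^2$, which may be negligible; no contradiction follows. (Note in the paper it is $B(x,y)$, the set of \emph{short} trajectories, that is shown to carry mass $\gtrsim(rR)^d$ — that lower bound comes from $\lambda$ and Chebyshev, not from the long trajectory.) Second, your estimate for the regularisation increment, $\e^2 r^{2d}h(m/r^{2d})$, implicitly assumes the competitor deposits its new mass where $\frac{\dd\pi}{\dd P}=0$. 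By convexity $h(\rho+\delta)-h(\rho)\geq h(\delta)-h(0)$, so the increment at a location with pre-existing density $\rho$ is \emph{at least} what you wrote, and can be far larger when $\rho$ is large; you would need an a priori density bound on the destination, which is precisely what the paper's pigeonhole argument provides and your proposal lacks. The remark that "convexity plus \cref{assumptionh2}" would let you linearise the increment to the $h$-cost of the new block goes in the wrong direction: convexity makes the increment superadditive, not subadditive. Finally, I would flag that the local $L^\infty$ bounds of \cite{Goldman2020} that the paper builds on are themselves monotonicity-based rather than competitor-based; the competitor machinery of \cite{Goldman2020,Otto2017} is used for the Campanato iteration (i.e.\ the large-scale regularity of \cref{sec:largescale}), not for the support bound.
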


The only available results proving sparsity of the support for regularised optimal transport problems concern $OT_{h_2}$, where global results showing that the spread of the support is of order $\tau(\e)$ are given in \cite{Nutz,Wiesel}. Our result applies to a much wider range of regularisations and is local. Whereas the proofs in \cite{Nutz,Wiesel} heavily rely on the precise structure of minimisers of quadratically regularised optimal transport, our proof builds on the local $L^\infty$-bounds for optimal transport problems obtained in \cite{Goldman2020,Goldman2025} and relies on a cyclical monotonicity result, see \cref{prop:monotonicity}. We view the terms involving $E(\pi,2R,b)$ as arising from the quasi-minimality with respect to quadratic optimal transport. They agree with the bounds obtained in the unregularised setting. The term $\tau(\e)$ comes from the regularisation and we believe it captures the sharp scaling in $\e$. Moreover, as the remark below shows, without \cref{assumptionh2} sparsity of the support fails.

\begin{remark}\label{rem:support}
The assumption $h'(0)=0$ is essentially equivalent to assuming the existence of $C_0\in \R$ such that $h'(x)\geq C_0$ for all $x\in \R_+$. Indeed, assuming the existence of such $C_0$, since $h$ is twice differentiable and convex, we may extend $h'$ continuously to $\R_+\cup \{0\}$ by setting $h'(0)=C_0$. Consider $\tilde h(x) = h(x)-C_0 x$. Then $\tilde h$ is twice differentiable, strictly convex and satisfies $\tilde h'(0)=0$. Moreover,
\begin{align}
\int \tilde h\left(\frac{\dd\pi}{\dd P}\right)\dd P =& \int h\left(\frac{\dd\pi}{\dd P}\right) \dd P-C_0\int \dd\pi=\int h\left(\frac{\dd\pi}{\dd P}\right) \dd P-C_0\lambda(\R^d).
\end{align}
In particular, minimality of \eqref{problem} is unaffected by the change $h\to \tilde h$.

If $\lim_{z\to 0} h'(z) = -\infty$ and $\pi$ is a minimiser of \eqref{problem}, then $\supp\;\pi = \supp\;P$. Indeed, it holds (see \eqref{eq:RadonNikodym}) that
\begin{align}
\frac{\dd\pi}{\dd P} = (h')^{-1}\left(\frac{f(x)+g(y)-|x-y|^2}{\e^2}\right)
\end{align}
for almost every $(x,y)\in \supp\;P$. Thus, if $\frac{\dd\pi}{\dd P}=0$ on a set of positive $P$-measure, then $f(x)+g(y)$ is infinite-valued on a set of positive $P=\lambda\otimes \mu$-measure. As $f\in L^1(\lambda), g\in L^1(\mu)$, this is impossible.
\end{remark}

While our approach does not seem to yield global bounds with the expected rate $\tau(\e)$, we can obtain a global bias bound, generalising \cite[Theorem 5.2]{Wiesel}, which treated the case $p=2$. Moreover, in the case $p=2$, we improve on the rate obtained in \cite[Theorem 5.2]{Wiesel}.
\begin{corollary}\label{cor:wieselcomparison}
Let $\lambda,\mu$ be compactly supported probability measures and assume that
\begin{equation}
\inf_{B_r(z)\subset \supp\; \lambda}\frac{\lambda(B_r(z))}{r^d} \eqqcolon D >0 \, .
\end{equation}
 Let $\pi_\e$ be a minimiser of \eqref{problem} with $h=h_p$ for some $p>1$. Assume the minimiser of the unregularised problem is given by a $L$-Lipschitz transport map $T$. Then there is $\e_0>0$ such that if $\e\leq \e_0$, it holds that
\begin{align}\label{eq:WieselComparison}
\text{either } |y-T(x)|\leq C R \quad \text{ or }\quad d(x,\partial\;\supp\;\lambda)<R \, 
\end{align}
with $R\sim \e^\frac{4}{(d(p-1)+2)(d+2)}$ and $C$ depending on $p,d,D,L$ and $\int |x|^{3}\dd(\lambda\otimes \mu)$.
\end{corollary}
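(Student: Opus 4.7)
The plan is to apply \cref{prop:Linfty} with the drift $b(x) \coloneqq T(x) - x$, which is $(L+1)$-Lipschitz by hypothesis, and to choose the scale $R$ so that the local excess energy and the regularisation length $\tau(\e)$ are simultaneously absorbed.

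The key preliminary is a global $L^2$-stability bound of the form
\[
\int_{\R^d\times\R^d} |y-T(x)|^2 \, \dd\pi_\e \lesssim_L \e^{4/(d(p-1)+2)}.
\]
Two ingredients enter. First, since $P=\lambda\otimes\mu$ is a probability measure with $h_p(1)=0$, Jensen's inequality yields $\int h_p(\dd\pi_\e/\dd P)\,\dd P\geq 0$, so that $\int |x-y|^2 \dd\pi_\e \leq OT_{h_p}(\lambda,\mu)$. Combining with the expansion $OT_{h_p}-OT \leq C\e^{4/(d(p-1)+2)}$ from \cite{Eckstein} and the optimality of $\pi_0$ for quadratic transport produces
\[
0 \leq \int |x-y|^2 \dd\pi_\e - W_2^2(\lambda,\mu) \leq C\e^{4/(d(p-1)+2)}.
\]
Second, this transport-cost gap is upgraded to an $L^2$-distance from the graph of $T$ by convex analysis: writing $T=\nabla\varphi$ with $\varphi$ convex, the $L$-Lipschitz property of $\nabla\varphi$ is equivalent to $L^{-1}$-strong convexity of the conjugate $\varphi^*$, whence
\[
\varphi(x) + \varphi^*(y) - x\cdot y \geq \tfrac{1}{2L}|y - T(x)|^2 .
\]
Integrating against $\pi_\e$ and using the Kantorovich duality identity $\int\varphi\,\dd\lambda+\int\varphi^*\,\dd\mu = \int x\cdot y\,\dd\pi_0$ converts the scalar transport-cost gap into the desired $L^2$-bound on $\pi_\e$.

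With this in hand, fix any $x_0\in\supp\lambda$ with $d(x_0,\partial\supp\lambda)\geq 2R$, so that $B_{2R}(x_0)\subset\supp\lambda$ and the local density hypothesis of \cref{prop:Linfty} is inherited with the constant $D$ from the global bound. The excess energy then controls as
\[
E(\pi_\e,2R,b) \lesssim \frac{1}{R^{d+2}}\int |y-T(x)|^2\,\dd\pi_\e \lesssim_L \frac{\e^{4/(d(p-1)+2)}}{R^{d+2}}.
\]
Choosing $R = C_0\,\e^{4/((d(p-1)+2)(d+2))}$ makes $E$ bounded by a constant; at the same time $\tau(\e)/R\sim \e^{2d/((d(p-1)+2)(d+2))}\to 0$ and $R>R_c\sim\tau(\e)$ for $\e\leq\e_0$, so the three terms in the max of \cref{prop:Linfty} are all $O(1)$. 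One concludes $|y-T(x)|\leq c(p,d,D,L)R$ for every $(x,y)\in\supp\pi_\e$ with $x\in B_R(x_0)$. Ranging $x_0$ over admissible centres covers exactly the set $\{x : d(x,\partial\supp\lambda)\geq R\}$, giving the first alternative in \eqref{eq:WieselComparison}.

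The main obstacle is the upgrade in the preliminary step: the quasi-minimality of $\pi_\e$ only bounds a scalar quantity, the transport cost, and one needs the Lipschitz hypothesis on $T$ to produce enough quadratic growth in the Fenchel--Young inequality to turn this into the pointwise $L^2$-distance from the graph of $T$. Without such a hypothesis, only weaker and non-quadratic stability estimates are available, which would degrade the exponent in $\e$ in the final rate.
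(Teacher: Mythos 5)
Your proposal is correct and follows the same overall route as the paper: bound $\int|y-T(x)|^2\,\dd\pi_\e$ by $L\,\e^{4/(d(p-1)+2)}$, localise to get $E(\pi_\e,2R,T-\mathrm{id})\lesssim R^{-(d+2)}\e^{4/(d(p-1)+2)}$, feed this into~\cref{prop:Linfty} with $b=T-\mathrm{id}$, and choose $R\sim\e^{4/((d(p-1)+2)(d+2))}$ to balance the three terms and beat $R_c$. The one place you depart from the paper is the derivation of the $L^2$-stability bound: the paper simply invokes~\cref{lem:malamut} (quoting \cite{Malamut}) together with~\cref{lem:eckstein}, while you re-derive both inputs from scratch --- first observing $\int h_p(\dd\pi_\e/\dd P)\,\dd P\geq h_p(1)=0$ by Jensen to get $\int|x-y|^2\dd\pi_\e\leq OT_{h_p}$, then writing $T=\nabla\varphi$ and using that $L$-Lipschitzness of $\nabla\varphi$ is equivalent to $L^{-1}$-strong convexity of $\varphi^*$, so the Fenchel--Young gap satisfies $\varphi(x)+\varphi^*(y)-x\cdot y\geq\tfrac1{2L}|y-T(x)|^2$, which upon integration against $\pi_\e$ and Kantorovich duality reproduces the Malamut inequality (in fact with the slightly sharper constant $L$ in place of $2L$). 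This self-contained convex-duality proof is a nice replacement for the citation, but the structure of the argument and the resulting rate in $\e$ are identical to the paper's.
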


The structure of this paper is as follows. In \cref{sec:prelim} we define our notation and collect a number of known results on the theory of regularised optimal transport minimisers. In \cref{sec:Linfty} we prove local bounds on the support. \cref{sec:largescale} describes how to obtain a large-scale $\e$-regularity theory, while \cref{sec:smallscale} proves gradient estimates for $T_\e$ on small-scales. Combining these two sections, we prove \cref{thm:main} in \cref{sec:proofMain}.

\section{Preliminaries}\label{sec:prelim}
\subsection{Notation}\label{sec:notation}
We write $\lesssim, \gtrsim$ if $\leq,\geq$ hold up to a  constant. We usually specify the dependence of the implicit constant in the statement of our results, but not within our proofs. We write $\sim$ if both $\lesssim$ and $\gtrsim $ hold. We use $a \ll b$ to mean that $a/b$ is sufficiently small. For $p \in [1,\infty]$, we define its H\"older conjugate $p'\coloneqq p/(p-1)$.

For $R>0$, $\#_R= (B_R\times \R^d)\times (\R^d\times B_R)$.
Given $A\subset \R^d$, we write $B\Subset A$ if $B$ is compactly contained in $A$.

 We say that a function $f:[0,\infty)\to \R$ is superlinear at $+\infty$ if
\begin{equation}
  \lim_{x\to +\infty}\frac{f(x)}{x}=+\infty \, .
\end{equation}

We use $\mathcal{M}(\R^d)$ to denote the space of positive, finite measures on $\R^d$. For any measures $\rho,\nu \in \mathcal{M}(\R^d)$, we write $\rho \ll \nu$ if $\rho$ is absolutely continuous with respect to $\nu$ and we denote by $\dfrac{\dd \rho}{\dd \nu}$ the Radon--Nikodym derivative. For any $\nu \in \mathcal{\R}^d$, we will use $\supp\; \nu$ to denote its support.  We use $\mathscr{H}^d_\mu$ to denote the $d$-dimensional Hausdorff measure defined with respect to $\mu$.

We use $C^{k,\alpha}(\Omega)$ (resp. $C^{k,\alpha}_{\rm loc}(\Omega)$) $k\in \mathbb{N},\, \alpha \in (0,1]$ to denote the space of $k$-times continuously differentiable functions on $\Omega$ (resp. compact subsets of $\Omega$) with $\alpha$-H\"older continuous highest-order derivative. We set $C^{k,\alpha-}(\Omega)=\bigcap_{\alpha'<\alpha}C^{k,\alpha'}(\Omega)$. The semi-norm in $C^{0,\alpha}(B_R)$ is denoted $[\cdot]_{\alpha,R}$.

Given a Lipschitz function $b(x)$ and $\pi\in \mathcal{M}(\R^d\times \R^d)$, denote $E(\pi,R,b) = R^{-(d+2)}\int_{B_R\times \R^d} |y-x-b(x)|^2 \dd\pi$. If $b=0$, we write $E(\pi,R)=E(\pi,R,0)$.

For $\lambda,\mu$ probability measures on $\R^d$ with $C^{0,\alpha}$-densities on $B_R$, we denote $D_{\lambda,\mu}(R)\coloneqq R^{2\alpha}([\lambda]_{\alpha,R}^2+[\mu]_{\alpha,R}^2),$ conflating measures and densities.

\subsection{A general monotonicity principle}
Monotonicity properties of minimisers to \eqref{problem} play a key role in providing $L^\infty$-bounds on their support. We state a general monotonicity principle that can essentially be found in \cite[Lemma 1.3]{Beiglbock}.

\begin{proposition}\label{prop:monotonicity}
Let $\pi$ be a minimiser of \eqref{problem}. Assume \cref{assumptionh} is satisfied.  Then for any $(x,y),(x',y')\in \supp\;\pi$ it holds that
\begin{equation}\label{eq:monotonicity}
h'\left(\frac{\dd \pi}{\dd P}(x,y)\right)+h'\left(\frac{\dd \pi}{\dd P}(x',y')\right)
\leq \frac{\Delta}{\e^2}+h'\left(\frac{\dd \pi}{\dd P}(x',y)\right)
+h'\left(\frac{\dd \pi}{\dd P}(x,y')\right)
\end{equation}
where
\begin{align}
\Delta=\Delta(x,y,x',y')\coloneqq|x-y'|^2+|x'-y|^2-|x-y|^2-|x'-y'|^2.
\end{align}
\end{proposition}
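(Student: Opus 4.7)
The plan is to realise the inequality as the first-order optimality condition for $\pi$ against a competitor that swaps a small amount of mass between boxes localised around the four points $(x,y)$, $(x',y')$, $(x,y')$, $(x',y)$. This is the same strategy which yields cyclical $c$-monotonicity in the unregularised setting, but now the regularisation term contributes the $h'$-dependent terms to \eqref{eq:monotonicity}.

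Concretely, after dispensing with the degenerate cases $x=x'$ or $y=y'$ (in which $\Delta=0$ and the claimed inequality becomes an equality), I would, for small enough $r>0$ so that $B_r(x)\cap B_r(x')=\emptyset$ and $B_r(y)\cap B_r(y')=\emptyset$, introduce the probability measures
\begin{equation}
\nu_1\coloneqq \frac{\pi|_{B_r(x)\times B_r(y)}}{\pi(B_r(x)\times B_r(y))},\qquad \nu_2\coloneqq \frac{\pi|_{B_r(x')\times B_r(y')}}{\pi(B_r(x')\times B_r(y'))},
\end{equation}
which are well-defined since $(x,y),(x',y')\in \supp \pi$. Denoting by $\alpha_1,\gamma_1$ the marginals of $\nu_1$ and by $\beta_2,\delta_2$ those of $\nu_2$, set $\sigma\coloneqq \alpha_1\otimes \delta_2+\beta_2\otimes \gamma_1-\nu_1-\nu_2$ and define the competitor $\pi_t\coloneqq \pi+t\sigma$. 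A direct computation of marginals gives $\pi_t\in \Pi(\lambda,\mu)$, and the choice of $\nu_i$ guarantees $\pi_t\geq 0$ for all $t\in [0,t_0]$ with $t_0$ sufficiently small.

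Exploiting minimality, the right derivative at $t=0$ of the functional in \eqref{problem} evaluated on $\pi_t$ is nonnegative. The transport cost contributes $\int |x-y|^2\,\dd\sigma$, which tends to $\Delta$ as $r\to 0$ by continuity of the integrand. Since $\sigma\ll P$, a first-order expansion of $h$ yields the regularisation contribution $\e^2\int h'(\dd\pi/\dd P)\,\dd\sigma$; using the continuous representative of $\dd\pi/\dd P$ provided by \eqref{eq:formulapif}, one may pass to the limit $r\to 0$ to obtain $\e^2[h'(\dd\pi/\dd P(x,y'))+h'(\dd\pi/\dd P(x',y))-h'(\dd\pi/\dd P(x,y))-h'(\dd\pi/\dd P(x',y'))]$. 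Summing the two contributions, rearranging, and dividing by $\e^2$ yields \eqref{eq:monotonicity}.

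The main technical point is the order of limits: I would first fix $r$, divide the inequality coming from minimality by $t>0$, let $t\to 0^+$ to obtain a one-parameter family of inequalities indexed by $r$, and only then let $r\to 0$. The accompanying subtlety is the pointwise interpretation of $\dd\pi/\dd P$ at the four base points, which is resolved by working throughout with the continuous version supplied by \eqref{eq:formulapif}.
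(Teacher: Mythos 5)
Your proof is correct and coincides with the variational mass-swap argument underlying \cite[Lemma~1.3]{Beiglbock}, to which the paper's one-line proof appeals without spelling out details. The competitor $\pi_t=\pi+t\sigma$ built from localised tensor products of marginals, the verification that $\pi_t\in\Pi(\lambda,\mu)$ and $\pi_t\geq 0$ for small $t>0$, the right-derivative computation at $t=0^+$ exploiting convexity of $h$ (with the observation that when $h'(0^+)=-\infty$ the density $\dd\pi/\dd P$ is bounded away from zero so the linearisation is justified, and when $h'(0^+)>-\infty$ one may normalise $h'(0)=0$), and the two-stage limit ($t\to 0^+$ for fixed $r$, then $r\to 0$ via the continuous representative of $\dd\pi/\dd P$) are precisely the ingredients of that reference.
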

\begin{proof}
The proposition follows from an easy adaption of \cite[Lemma 1.3]{Beiglbock}.
\end{proof}

\subsection{Qualitative theory of minimisers of regularised transport}\label{sec:qualitative}
In this section, we recall the qualitative theory of minimisers of \eqref{problem}. This follows by fairly standard arguments, see e.g. \cite{Bayraktar2025,Gonzalez2025}, but for the sake of completeness we provide the complete arguments in~\cref{sec:regularised_optimal_transport}. Throughout this section, we fix a minimiser $\pi$ of \eqref{problem} and assume that \cref{assumptionh} holds. Note in particular, that then $h'$ is strictly monotonic increasing and hence has a strictly monotonic increasing inverse $(h')^{-1}$ on its image. Moreover, denoting by $h^\ast$ the convex conjugate of $h$, $(h^\ast)' = (h')^{-1}$ and for $z>0$, $(h^\ast)''(h'(z))= (h'')(z)^{-1}$.

There exist continuous potentials $f,g$, which are unique up to additive constants, such that for every $x\in \supp\;\lambda$, $y\in \supp\;\mu$, $f,g$ satisfy the following first-order optimality conditions:

\begin{align}\label{eq:Schrodinger}
1 =& \int (h')^{-1}\left(\frac{(f(x)+g(y)-|x-y|^2}{\e^2}\right)\dd\lambda(x)\\
=&  \int (h')^{-1}\left(\frac{f(x)+g(y)-|x-y|^2}{\e^2}\right)\dd\mu(y).
\end{align}
 The relation to $\pi$ is given through the identity:
\begin{align}\label{eq:RadonNikodym}
\frac{\dd\pi}{\dd P}(x,y) = (h')^{-1}\left(\frac{f(x)+g(y)-|x-y|^2}{\e^2}\right).
\end{align}
Consequently,
\begin{align}
\supp\;\pi = \{(x,y)\colon f(x)+g(y)-|x-y|^2\geq 0\}.
\end{align}
It is convenient to write $\psi(x,y) = f(x)+g(y)-|x-y|^2$. We note that then
\begin{align}
h''\left(\frac{\dd\pi}{\dd P}\right)^{-1} = (h^\ast)''(\psi(x,y)).
\end{align}

In the case $h(z)=\frac 1 {p-1}(|z|^p-1)$, more can be said about $f,g$.
\begin{lemma}\label{lem:concavity}
Let $h(z)=\frac 1 {p-1} (|z|^p-1)$ for some $p\in(1,2]$. Then, on any convex open subset $\Omega\subseteq \supp\;\lambda$, $f(x)-|x|^2$ is $C^1$ and concave.  In particular, $\psi(x,y)$ is concave as a function of $x$ on $\Omega$ for all $y$. Moreover, for $x \in \Omega$
\begin{align}
\nabla f(x) =& 2x-2\left(\int \psi(x,y)_+^{p^\prime-2}\dd\mu(y)\right)^{-1}\int y\psi(x,y)_+^{p^\prime-2}\dd\mu(y) \, .
\end{align}
An analogous result holds for $g(y)-|y|^2$ and $\psi(x,y)$ as a function of $y$.
\end{lemma}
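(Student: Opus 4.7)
The plan is to reformulate the Schr\"odinger identity \eqref{eq:Schrodinger} as an implicit equation for the shifted potential $\tilde f(x) \coloneqq f(x) - |x|^2$, and then exploit joint convexity of that relation. Writing also $\tilde g(y) \coloneqq g(y) - |y|^2$, expanding the square gives $\psi(x,y) = \tilde f(x) + \tilde g(y) + 2 x \cdot y$, which is affine in $x$ for each fixed $y$. For $h = h_p$ we have $(h')^{-1}(u) = \bigl((p-1)u/p\bigr)^{1/(p-1)}$ on $[0,\infty)$, so \eqref{eq:Schrodinger} becomes $H(\tilde f(x), x) = 1$, where
\begin{equation*}
  H(s, x) \coloneqq \int \Phi\bigl(s + \tilde g(y) + 2 x \cdot y\bigr)\, \dd\mu(y), \qquad \Phi(t) \coloneqq \left(\frac{(p-1)\, t_+}{p\, \e^2}\right)^{\!1/(p-1)}.
\end{equation*}

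Since $1/(p-1) \geq 1$ for $p \in (1, 2]$, the function $\Phi$ is convex and non-decreasing. Composing with the jointly affine map $(s, x) \mapsto s + \tilde g(y) + 2 x \cdot y$ and integrating in $\mu$ preserves convexity, so $H$ is jointly convex in $(s, x)$; moreover $H(\cdot, x)$ is strictly increasing through the level $1$. For $x_1, x_2 \in \Omega$, $\theta \in [0,1]$, and $x_\theta \coloneqq (1-\theta) x_1 + \theta x_2$, joint convexity combined with $H(\tilde f(x_i), x_i) = 1$ gives
\begin{equation*}
  H\bigl((1-\theta)\tilde f(x_1) + \theta \tilde f(x_2),\, x_\theta\bigr) \leq 1 = H(\tilde f(x_\theta), x_\theta),
\end{equation*}
and strict monotonicity forces $\tilde f(x_\theta) \geq (1-\theta)\tilde f(x_1) + \theta \tilde f(x_2)$. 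Hence $\tilde f$ is concave on $\Omega$; concavity of $\psi(\cdot, y)$ follows since $\psi(x, y) - \tilde f(x)$ is affine in $x$. The analogous statements for $\tilde g$ and for $\psi(x, \cdot)$ are obtained by the symmetric argument.

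The gradient formula comes from implicit differentiation. Being concave, $\tilde f$ is locally Lipschitz, hence differentiable almost everywhere; at any such point, differentiating $H(\tilde f(x), x) = 1$ in $x$ gives $\partial_s H \cdot \nabla \tilde f + \nabla_x H = 0$. A direct computation, using the identity $(2-p)/(p-1) = p' - 2$, shows that $\partial_s H$ and $\nabla_x H$ share a common positive prefactor multiplying $\int \psi(x,y)_+^{p' - 2}\, \dd\mu(y)$ and $\int 2 y\, \psi(x,y)_+^{p' - 2}\, \dd\mu(y)$ respectively. Rearranging and then inserting $\nabla f = \nabla \tilde f + 2 x$ reproduces the claimed formula. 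This right-hand side is continuous in $x$ by dominated convergence, well-defined because the Schr\"odinger identity forces $\psi(x, \cdot)_+ > 0$ on a $\mu$-set of positive measure, so the denominator is locally bounded below by a positive constant. Since a concave function coincides with any continuous single-valued selection from its superdifferential, $\tilde f \in C^1(\Omega)$.

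The main obstacle I foresee is justifying differentiation under the integral at the endpoint $p = 2$, where $\Phi(t) = c\, t_+$ is merely Lipschitz and so $\Phi'$ exists only almost everywhere. The remedy lies in the absolute continuity of $\mu$ together with concavity of $y \mapsto \tilde g(y) + 2 x \cdot y$: the level set $\{y : \psi(x, y) = 0\}$ is the zero set of a non-constant concave function, hence Lebesgue-null and thus $\mu$-null, so the integrand defining $H$ is differentiable at $\mu$-almost every $y$ with a locally bounded derivative, and the dominated convergence theorem applies.
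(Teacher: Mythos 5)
Your proof is correct and rests on the same two ingredients as the paper's: the fact that $\psi(x,y)-\tilde f(x)$ is affine in $x$, and the convexity and monotonicity of $t\mapsto t_+^{q-1}$ with $q=p'\geq 2$, combined with the Schr\"odinger normalisation \eqref{eq:Schrodinger}. The packaging differs: the paper argues by contradiction, extracting from a failure of concavity a $y$-uniform strict inequality $\psi(sx+(1-s)z,\cdot) < s\psi(x,\cdot)+(1-s)\psi(z,\cdot)$ and then showing the normalisation constant would have to be strictly smaller than itself, whereas you encode the same information into joint convexity of $H(s,x)=\int\Phi(s+\tilde g(y)+2x\cdot y)\,\dd\mu(y)$ and read off concavity of the implicitly defined $\tilde f$ from strict monotonicity of $H(\cdot,x)$ through level $1$. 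The two are mathematically equivalent, and your implicit-function formulation is a bit more systematic; it also makes the subsequent gradient formula come out by routine implicit differentiation $\nabla\tilde f=-\nabla_x H/\partial_s H$.

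One place where you add genuine value is the endpoint $p=2$ ($q=2$), where $\Phi(t)=ct_+$ is only Lipschitz, so differentiating under the integral and the claimed continuity of the right-hand side both require the level set $\{y:\psi(x,y)=0\}$ to be $\mu$-null. Your observation that this set is the boundary of a convex region (by concavity of $\psi(x,\cdot)$), hence Lebesgue-null, and that $\mu$ is absolutely continuous, is precisely the justification the paper leaves implicit. Note that at $p=2$ the formula reads $\psi_+^{0}=\mathds 1_{\{\psi>0\}}$ under the convention $0^0=0$, which is the correct one-sided derivative convention here. The remaining point you invoke --- that a concave function admitting a continuous single-valued selection of its superdifferential is $C^1$ --- is standard, and the denominator $\int\psi_+^{q-2}\,\dd\mu$ is strictly positive because \eqref{eq:Schrodinger} forces $\psi(x,\cdot)>0$ on a $\mu$-positive set, matching the paper's well-definedness remark.
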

\begin{proof}
We focus on the statements involving $f$, the arguments for the statements involving $g$ are analogous.  Set $q=p^\prime \geq 2$ and fix some convex open $\Omega \subseteq \supp\; \lambda$. Assume $f- |x|^2$ is not concave on $\Omega$. Then, there exist $x,z\in \Omega$ and $s \in (0,1)$, such that
\begin{align}
\psi(s x+(1-s) z,y)<s \psi(x,y)+(1-s) \psi(z,y) \, ,
\end{align}
for all $y \in \R^d$. We then estimate using \eqref{eq:Schrodinger}, monotonicity of $(\cdot)_+^{q-1}$, and convexity of $(\cdot)^{q-1}$,
\begin{align}
(q')^{1-q'}\e^{2(q-1)} =& \int \psi(s x+(1-s)z,y)_+^{q-1}\dd\mu(y)\\
<& \int (s \psi(x,y)_++(1-s)\psi(z,y)_+)^{q-1}\dd\mu(y)\\
\leq& s \int \psi(x,y)_+^{q-1}\dd\mu(y)+(1-s) \int \psi(z,y)_+^{q-1}\dd\mu(y) = (q')^{1-q'}\e^{2(q-1)}.
\end{align}
We conclude that $f-|x|^2$ is concave on $\Omega$ and so is $\psi(x,y)$ as a function of $x$.

Consequently, $\psi$ is differentiable in $x$ almost everywhere on $\Omega$ and for almost every $x \in \Omega$, we may differentiate \eqref{eq:Schrodinger} to find
\begin{align}
\nabla f(x) = 2x-2\left(\int\psi(x,y)_+^{q-2}\dd\mu(y)\right)^{-1}\int y \psi(x,y)_+^{q-2}\dd\mu(y).
\end{align}
Note that the above object is well-defined due to~\eqref{eq:Schrodinger}, which implies that for every $x \in \Omega$ there must exist a set of positive $\mu$-measure on which $\psi(x,y)_+>0$. Recalling that $f$ is continuous, the right-hand side is continuous as a function of $x$, proving that $f-|x|^2$ is in fact a $C^1$-function.
\end{proof}

\subsection{Transformation under affine changes of coordinates}\label{sec:affinechanges}
We outline how minimality of \eqref{problem} is affected by affine changes of coordinates. Consider $\kappa\in K$, $K$ a compact subset of $(0,\infty)$, $b\in \R^d$ and $A\in \R^{d\times d}$ positive-definite and symmetric, and $\gamma\in G$, where $G$ is a compact subset of $(0,\infty)$. Denote the set of all such $\mathsf{s}\coloneqq (A,b,\gamma,\kappa)$ by $\mathscr S$, the set of admissible rescalings. For any given $\mathsf{s}=(A,b,\gamma,\kappa)\in \mathscr S$, introduce
\begin{gather}
Q(x,y) = (Q_1(x),Q_2(y))= (A^{-1}x,\gamma a(y-b))\\
\lambda_{\mathsf s} = \kappa(Q_1)_{\#}\lambda,\quad \mu_{\mathsf s} = \kappa(Q_2)_{\#}\mu ,\quad \pi_{\mathsf s} = \kappa Q_{\#} \pi.
\end{gather}

Suppose now $\pi\in \Pi(\lambda,\mu)$ minimises \eqref{problem} with $h=h_p$ for some $p\geq 1$ and $\mathsf s=(A,b,\gamma,\kappa)\in \mathscr S$. Then
\begin{align}
&\frac 1 {\kappa} \int |x-y|^2 \dd\pi_{\mathsf s} + \frac{\e^2}{\kappa} \int h\left(\frac{\dd\pi_{\mathsf s}}{\dd(\lambda_{\mathsf s}\otimes \mu_{\mathsf s})}\right)\dd (\lambda_{\mathsf s}\otimes \mu_{\mathsf s})\\
=& \int |A^{-1} x-\gamma A(y-b)|^2 \dd\pi + \e^2 \kappa^{1-p} \int h\left(\frac{\dd\pi}{\dd P}\right)\dd P\\
&+1_{p>1}\frac 1 {p-1}\e^2(\kappa^{1-p}- \kappa)\int 1 \dd P\\
=& \int |A^{-1} x|^2-\gamma^2 |x|^2+2\gamma \langle x,b\rangle \dd \lambda + \int |\gamma A(y-b)|^2-\gamma^2 |y|^2 \dd\mu + \int \gamma |x-y|^2 \dd\pi\\
&+ \e^2\kappa^{1-p} \int h\left(\frac{\dd\pi}{\dd P}\right)\dd P+1_{p>1}\frac 1 {p-1}\e^2(\kappa^{1-p}- \kappa)\int 1 \dd P.
\end{align}
Recognising the first two integrals as null-Lagrangians, we have that $\pi_{\mathsf s}\in \Pi(\lambda_{\mathsf s},\mu_{\mathsf s})$ is a minimiser of \eqref{problem} with parameter $\e \kappa^\frac{1-p} 2\gamma^{-\frac 1 2}$.

\subsection{Quantitative convergence of costs}
We recall two results concerning convergence rates for regularised optimal transport costs to quadratic optimal transport costs.
\begin{lemma}{\cite[Corollary 3.14]{Eckstein}} \label{lem:eckstein} Let $\lambda,\mu$ be compactly supported probability measures with bounded densities $f$ and $g$, respectively. Then for $p\geq 1$,
\begin{align}
OT_{h_p}(\lambda,\mu)-OT(\lambda,\mu) \leq \kappa(\e)\coloneqq\begin{cases}
\frac d 2\e^2 \log(\e^{-2})+K\e^2 \quad& \text{ if } p=1\\
K\e^\frac 4 {d(p-1)+2} \quad& \text{ if } p>1.
\end{cases}
\end{align}
Here $K$ depends on $p$, $d$, the size of the support of $\lambda,\mu$ and the bound for their densities only. In particular, if $|\supp\;\lambda|+|\supp\;\mu|\lesssim 1$ and $|f|+|g|\lesssim 1$, then $K\lesssim 1$.
\end{lemma}
\begin{proof}
As stated in \cite[Corollary 3.14]{Eckstein}, $K$ depends on the constant in the quantization rate of $\lambda$ and $\mu$. By \cite[Corollary 6.7]{Eckstein}, see also \cite[Remark 2.1]{Eckstein}, this constant can be bounded by an explicit constant depending on $p$, $d$ and $\int |x|^{p+\delta}\dd(\lambda+\mu)$. This gives the statement.
\end{proof}
Further, we recall the following result:
\begin{lemma}\cite[Lemma 3.8]{Malamut}\label{lem:malamut}
Let $\lambda,\mu$ be compactly supported probability measures. Suppose that $T$ is the optimal transport map between $\lambda$ and $\mu$ with respect to quadratic cost and $L$-Lipschitz. Then for any $\pi\in \Pi(\lambda,\mu)$,
\begin{align}
\int |T(x)-y|^2 \dd\pi\leq 2 L \left(\int |x-y|^2 \dd \pi-\int |x-T(x)|^2\dd\lambda\right).
\end{align}
\end{lemma}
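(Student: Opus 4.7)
The plan is to exploit the Brenier representation of the optimal map. By Brenier's theorem, the $L$-Lipschitz map $T$ can be written as $T=\nabla\phi$ for some convex $\phi$. Lipschitzness of $\nabla\phi$ is equivalent to $\phi$ being $L$-smooth and, by the Baillon--Haddad theorem (equivalently, by convex duality), to the convex conjugate $\phi^\ast$ being $(1/L)$-strongly convex on the range of $\nabla\phi$. The key observation is that the potential terms $\phi^\ast(y)$ and $\phi^\ast(T(x))$ have the same $\pi$-integral, because $\pi$ has $y$-marginal $\mu$ and $T_\#\lambda=\mu$; so they will cancel when a strong convexity inequality is integrated against $\pi$.

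Concretely, the first step is to write out the $(1/L)$-strong convexity of $\phi^\ast$ at the pair of points $(y,T(x))$, using $\nabla\phi^\ast(T(x))=x$, to obtain the pointwise bound
\[
\phi^\ast(y)-\phi^\ast(T(x))-x\cdot(y-T(x))\ \geq\ \tfrac{1}{2L}|y-T(x)|^2 .
\]
Integrating against $\pi$ and invoking the aforementioned cancellation of the $\phi^\ast$-terms yields
\[
\int x\cdot(T(x)-y)\,\dd\pi\ \geq\ \tfrac{1}{2L}\int|T(x)-y|^2\,\dd\pi .
\]
The second step is an elementary algebraic identity: expanding squares and using that $\pi\in\Pi(\lambda,\mu)$ together with $T_\#\lambda=\mu$, the terms $\int|x|^2\dd\lambda$, $\int|y|^2\dd\mu$, and $\int|T(x)|^2\dd\lambda$ cancel in pairs, leaving
\[
\int|x-y|^2\,\dd\pi-\int|x-T(x)|^2\,\dd\lambda\ =\ 2\int x\cdot(T(x)-y)\,\dd\pi .
\]
Substituting this identity into the previous display produces the claimed inequality, in fact with the slightly sharper constant $L$ in place of $2L$.

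I do not anticipate a genuine obstacle. The only technical point is ensuring that the strong convexity inequality for $\phi^\ast$ is valid at the points appearing in the integrals; but since $T_\#\lambda=\mu$, every $y\in\supp\mu$ lies in the range of $\nabla\phi=T$, so $\phi^\ast$ is differentiable and $(1/L)$-strongly convex at all points in play, while any $\lambda$-null set on which $\phi$ fails to be differentiable is irrelevant for the integrals.
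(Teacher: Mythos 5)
The paper does not give a proof of this lemma; it cites \cite[Lemma 3.8]{Malamut} and uses it as a black box. So there is no in-paper argument to compare against, and I evaluate your proof on its own merits.

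Your argument is correct, and it even yields the sharper constant $L$ in place of $2L$. The strategy — pass to the Brenier potential $\phi$, use that $L$-smoothness of $\phi$ is equivalent to $(1/L)$-strong convexity of $\phi^\ast$, apply the strong-convexity inequality at the pair $(y, T(x))$ with subgradient $x\in\partial\phi^\ast(T(x))$, and integrate against $\pi$ so that the $\phi^\ast$-terms cancel because both marginals of $\pi$ and $T_\#\lambda=\mu$ push $\phi^\ast$ to the same integral against $\mu$ — is clean, and the subsequent polarization identity is elementary and correctly exploits the marginal constraints and $T_\#\lambda=\mu$. One small phrasing caveat: strong convexity is not a pointwise property, so the closing remark that $\phi^\ast$ is "differentiable and $(1/L)$-strongly convex at all points in play" is not quite what is needed. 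What is actually required is the two-point inequality
\begin{equation}
\phi^\ast(y)\ \geq\ \phi^\ast(T(x)) + x\cdot\bigl(y-T(x)\bigr) + \tfrac{1}{2L}\,|y-T(x)|^2
\end{equation}
for $y\in\supp\mu$, $x\in\supp\lambda$, and its validity rests on $\phi$ being $L$-smooth on a convex set containing the relevant segments. If one only knows that $T$ is $L$-Lipschitz on $\supp\lambda$ (which may be non-convex), one should first invoke a $C^{1,1}$ convex extension theorem (e.g.\ Azagra--Le~Gruyer--Mudarra) to extend $T$ to the gradient of a globally $L$-smooth convex function on $\R^d$; once that is done, $\phi^\ast$ is $(1/L)$-strongly convex on all of $\R^d$ and the inequality above holds unconditionally. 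With that one technical repair made explicit, your proof is complete.

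Also note that you do not need the differentiability of $\phi^\ast$ at $T(x)$ at all: the subgradient inequality with any $p\in\partial\phi^\ast(T(x))$ suffices, and $x\in\partial\phi^\ast(T(x))$ follows from Fenchel duality whenever $T(x)\in\partial\phi(x)$. So the appeal to differentiability in the final paragraph can simply be dropped.
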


\section{Local $L^\infty$-bounds}\label{sec:Linfty}
In this section we prove our first main result, local $L^\infty$-bounds on the support of minimisers of \eqref{problem}. From this result, we deduce the bias bound \cref{cor:wieselcomparison}. Moreover, we provide a replacement for these bounds in the entropic setting that is sufficient for our needs.

\begin{proof}[Proof of \cref{prop:Linfty}]
For convenience, we write $E(R,b)\coloneqq E(\pi,R,b)$. Fix $(x,y)\in (B_R\times \R^d)\cap \supp\;\pi$. 

\step {1. Monotonicity} Recall that with $\Delta=\Delta(x,y,x',y')=|x-y'|^2+|x'-y|^2-|x-y|^2-|x'-y'|^2 $, ~\cref{prop:monotonicity} implies that for any $(x',y')\in \supp\;\pi$,
\begin{align}
0\leq & h'\left(\frac{\dd \pi}{\dd P}(x,y)\right)+h'\left(\frac{\dd \pi}{\dd P}(x',y')\right)\leq \frac{\Delta}{\e^2}+h'\left(\frac{\dd \pi}{\dd P}(x',y)\right)
+h'\left(\frac{\dd \pi}{\dd P}(x,y')\right)\\
\leq& \frac{\Delta}{\e^2}+2\max\left(h'\left(\frac{\dd \pi}{\dd P}(x',y)\right),h'\left(\frac{\dd \pi}{\dd P}(x,y')\right)\right) \, .
\end{align}
Assuming $\Delta \leq 0$, we deduce
\begin{align}\label{eq:monotonicity1}
(h')^{-1}\left(\frac{-\Delta}{2\e^2}\right)\leq& (h')^{-1}\left(\max\left(h'\left(\frac{\dd \pi}{\dd P}(x',y)\right)
,h'\left(\frac{\dd \pi}{\dd P}(x,y')\right)\right)\right)\\
=&\max\left(\frac{\dd\pi}{\dd P}(x',y),\frac{\dd\pi}{\dd P}(x,y')\right).
\end{align}

\step {2. Bounds on $\Delta$} We next prove that $-\Delta\geq r R |y-x-b(x)|$ whenever $|y-x-b(x)|\geq 18(L+1) r R + 6 \bar{r}R$ for $0<r<\frac 1 3$ and $\bar r>0$. Set
\begin{align}
B(x,y) =& \left\{(x',y')\in B_{rR}\left(x+2 r R\frac{y-x-b(x)}{|y-x-b(x)|}\right)\times \R^d\colon |x'+b(x')-y'|\leq  \bar{r} R\right\}.
\end{align}
Now fix some $(x',y')\in B(x,y)$. Note
\begin{align}
\Delta =& 2 \langle x-x',y-y'\rangle \\
=& 2 \langle x-x',y-x-b(x)-(y'-x'-b(x'))\rangle + 2\langle x-x',b(x)-b(x')+x-x'\rangle.
\end{align}
Regarding the second term, using the $L$-Lipschitz regularity of $b$,
\begin{align}
2\langle x-x',b(x)-b(x')+x-x'\rangle \leq 2(L+1)|x-x'|^2\leq 2(L+1)(3 r R)^2 \, .
\end{align}
For the first term, we find using the choice of $(x',y')$,
\begin{align}
&2\langle x-x',y-x-b(x)-(y'-x'-b(x'))\rangle \\
=& 2\langle -2 r R \frac{y-x-b(x)}{|y-x-b(x)|},y-x-b(x)-(y'-x'-b(x'))\rangle\\
&+ 2\langle x-x'+2 r R \frac{y-x-b(x)}{|y-x-b(x)|},y-x-b(x)-(y'-x'-b(x'))\rangle\\
\leq& -4 r R |y-x-b(x)|+4 r\bar{r} R^2 +2 r R(|y-x-b(x)|+\bar{r} R)\\
=& -2 r R |y-x-b(x)|+ 6 r \bar{r} R^2.
\end{align}
As $|y-x-b(x)|\geq  18(L+1) r R +6\bar{r}R$, we deduce
\begin{align}
\Delta \leq -  r R  |y-x-b(x)| \, .
\end{align}

\step{3. $B(x,y) \cap \supp\; \pi$ is large}  We want to ensure 
\begin{equation}
  \label{eq:goodSetSize}
  \pi(B(x,y))\sim (r R)^d \, .
\end{equation}

Choosing $B=B_{rR}(x+2rR\frac{y-x-b(x)}{|y-x-b(x)|})$, we have
\begin{align}\label{eq:badSet}
\pi\left(B\times \R^d\setminus B(x,y)\right)\leq& \frac 1 {(\bar{r} R)^2} \int_{B\times \R^d} |y'-x'-b(x')|^2 \dd\pi(x',y')\leq \frac{3^{d+2}R^d E(2R,b)}{\bar{r} ^2}.
\end{align}
We now divide our analysis into two regimes in which we specify $r,\bar{r}$:

\substep{1. The large transport regime} If 
\begin{equation}
|y-x-b(x)|\geq R(1 + K(d,D,L) E(2R,b)^{\frac 12}) \, ,
\end{equation}
where
\begin{equation}
  K(d,D,L)= 18 \sqrt{2D^{-1}\times 54^d}(L+1)^{-\frac d2} \, 
\end{equation}
we choose
\begin{equation}
r=1/(18(L+1))<1/3,\quad \bar{r}=K(d,D,L)/6E(2R,b)^{\frac 12}   \, .
\label{eq:regime1}
\end{equation} 
Then we deduce that
\begin{equation}
  \pi\left( B(x,y) \right) \sim (r R)^d\, ,
\end{equation}
and 
\begin{equation}
  |y-x-b(x)|\geq 18(L+1)rR + 6 \bar{r}R \, .
  \label{eq:lb1}
\end{equation}

\substep{2. The small transport regime} If  
\begin{equation}
 |y-x-b(x)|< R(1 + K(d,D,L) E(2R,b)^{\frac 12})\, , \quad E(2R,b) \leq 1\, ,
\end{equation}
we choose
\begin{align}
  r= &\,\frac{|y-x-b(x)|}{R}P(d,L),\quad P(d,L)\coloneqq \min\left(\frac{1}{4(1 + K(d,D,L))},\frac{1}{36(L+1)}\right)<\frac 13 \, , \\
  \bar{r}=&\,\sqrt{2D^{-1}} r^{-\frac d 2}3^{d/2 +1}E(2R,b)^{\frac12}\eqqcolon M(d,D,L)|y-x-b(x)|^{-\frac d2}E(2R,b)^{\frac12}   \, .
\label{eq:regime2}
\end{align} 
This implies again that
\begin{equation}
  \pi\left( B(x,y) \cap (B \times \R^d)\right) \sim ( r R)^d\, .
\end{equation}
Note further that if $|y-x-b(x)|\geq R N E(2R,b)^{\frac 1 {d+2}}$ then
\begin{align}
   6\bar{r}R \leq & \,R N^{-\frac d2 } M(d,D,L)E(2R,b)^{\frac 1 {d+2}} \\
\leq & \,  N^{-(1+\frac d2)}M(d,D,L) |y-x-b(x)| <\frac12 |y-x-b(x)|\, ,
 \end{align} 
 for $N=N(d,D,L)=(2M(d,D,L))^{1+\frac d2}$. Combining this estimate with the choice of $r$, we have
 \begin{equation}
   |y-x-b(x)|\geq 18(L+1)rR + 6\bar{r}R\, .
   \label{eq:lb2}
 \end{equation}

We now make the following claim whose proof we postpone to Step 5.
\begin{claim}\label{claim:densitybound}
For $r,\bar{r}$ chosen to be as in~\eqref{eq:regime1} or ~\eqref{eq:regime2}, there exists $C>0$, such that we can pick 
\begin{align}
(x',y')\in  \supp\; \pi \cap B(x,y)
\end{align}
such that
\begin{align}\label{eq:step4}
\max\left(\frac{\dd\pi}{\dd P}(x,y'),\frac{\dd\pi}{\dd P}(x',y)\right) \leq C (rR)^{-d}.
\end{align} 
\end{claim}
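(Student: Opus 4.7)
My plan is a short Markov/pigeonhole argument that takes as input only the lower bound $\pi(B(x,y)) \gtrsim (rR)^d$ from Step~3 and the fact that $\pi$ has marginals $\lambda$ and $\mu$. Write $\rho \coloneqq \dd\pi/\dd P$. Via the continuous representation $\rho = (h')^{-1}(\psi_+/\e^2)$ from~\eqref{eq:RadonNikodym} together with the first-order conditions~\eqref{eq:Schrodinger}, the pointwise identities $\int \rho(x,y')\,\dd\mu(y') = 1$ (for every $x \in \supp\;\lambda$) and $\int \rho(x',y)\,\dd\lambda(x') = 1$ (for every $y \in \supp\;\mu$) both hold; the base point $(x,y) \in \supp\;\pi \subset \supp\;\lambda \times \supp\;\mu$ satisfies these hypotheses, so the two identities are available at the fixed $x$ and the fixed $y$.

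For a large parameter $C_0 > 0$ (to be chosen in terms of $d, D, L$), I would then introduce the bad sets
\begin{align}
A \coloneqq \{x' \in \R^d : \rho(x',y) > C_0 (rR)^{-d}\}, \qquad B \coloneqq \{y' \in \R^d : \rho(x,y') > C_0 (rR)^{-d}\}.
\end{align}
Markov's inequality applied to the two marginal identities immediately gives $\lambda(A) \leq (rR)^d/C_0$ and $\mu(B) \leq (rR)^d/C_0$. Since $\pi$ has marginals $\lambda, \mu$, we have $\pi(A \times \R^d) = \lambda(A)$ and $\pi(\R^d \times B) = \mu(B)$, so the bad region $(A \times \R^d) \cup (\R^d \times B)$ carries total $\pi$-mass at most $2(rR)^d / C_0$.

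Combining this with $\pi(B(x,y)) \geq c_0(d,D,L)(rR)^d$ from Step~3 (valid in both regimes~\eqref{eq:regime1} and~\eqref{eq:regime2}), I would choose $C_0 = C_0(d,D,L)$ large enough that the bad mass is strictly less than $\pi(B(x,y))$. The complement $B(x,y) \setminus ((A \times \R^d) \cup (\R^d \times B))$ then has strictly positive $\pi$-measure and therefore meets $\supp\;\pi$; any point $(x',y')$ in the intersection lies in $B(x,y) \cap \supp\;\pi$ and simultaneously satisfies $\rho(x,y') \leq C_0(rR)^{-d}$ and $\rho(x',y) \leq C_0(rR)^{-d}$, which is~\eqref{eq:step4}. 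I do not foresee a substantive obstacle: the only care point is to use the continuous (rather than merely $L^1$) representative of $\rho$ so that the two marginal identities hold pointwise at the specific $x$ and $y$ we care about, not just for $\lambda$- and $\mu$-a.e. points; this is exactly what Section~\ref{sec:qualitative} provides.
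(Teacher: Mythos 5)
Your proof is correct and uses the same inputs as the paper: the pointwise Schr\"odinger identity $\int\rho(x',y)\,\dd\lambda(x')=1$ (and its counterpart in $y'$) and the mass lower bound $\pi(B(x,y))\gtrsim (rR)^d$ from Step~3. You phrase the pigeonhole directly via Markov's inequality and a union bound, whereas the paper argues by contradiction, splits into the cases $\pi(B_1)\ge\tfrac12\pi(B(x,y))$ or $\pi(B_2)\ge\tfrac12\pi(B(x,y))$, and projects the larger bad set onto one coordinate; these are the same estimate stated in contrapositive form, so the approaches are essentially identical.
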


\step{4. Conclusion under~\cref{claim:densitybound}}

\substep{1. The large transport regime} 
We assume for contradiction that
\begin{equation}
  |y-x-b(x)|\geq R(1 + K(d,D,L) E(2R,b)^{\frac 12}) \, .
\end{equation}
Combining~\eqref{eq:lb1}, the outcome of Step 2, and~\eqref{eq:monotonicity1} with the choice of $(x',y')$ obtained from~\cref{claim:densitybound}, we find
\begin{equation}
  (rR)^d (h')^{-1}\left(\frac{rR |y-x-b(x)|}{\e^2}\right) \leq C \, .
\end{equation}
Using the montonicity of $(h')^{-1}$, ~\eqref{eq:lb1}, and $R\geq R_c$, we obtain
\begin{equation}
  (rR_c)^d (h')^{-1}\left(\frac{ 18(L+1)(rR_c)^2}{\e^2}\right) \leq C\, .
\end{equation}
We implicitly define $R_c$ through the expression
\begin{equation}
  (rR_c)^d (h')^{-1}\left(\frac{ 18(L+1)(rR_c)^2}{\e^2}\right) =2 C \, .
\end{equation}
This is possible because the left-hand side is a continuous, strictly monotone increasing function  which goes to $0$ as $R_c\to 0$ and $+\infty$ as $R_c \to \infty$. We thus have a contradiction, which implies that
\begin{equation}
  |y-x-b(x)|< R(1 + K(d,D,L) E(2R,b)^{\frac 12}) \, ,
\end{equation}
as long as $R\geq R_c$.  This concludes the proof if $E(2R,b) \geq 1$.

\substep{2. The small transport regime}

In light of Substep 1, we consider the setting $E(2R,b)\leq 1$ and
\begin{equation}
  |y-x-b(x)|\leq R(1 + K(d,D,L)) \, .
\end{equation}
We assume
\begin{equation}
  |y-x-b(x)|\geq RN(d,D,L)E(2R,b)^{\frac1{d+2}} \, ,
\end{equation}
where $N$ is as defined in Step 3. Combining~\eqref{eq:lb2}, the outcome of Step 2, and~\eqref{eq:monotonicity1} with the choice of $(x',y')$ from~\cref{claim:densitybound}, we obtain
\begin{equation}
  P(d,D,L)^d|y-x-b(x)|^d(h')^{-1}\left(\frac{P(d,D,L)|y-x-b(x)|^2}{\e^2}\right) \leq C \, ,
\end{equation}
for some constant $P(d,D,L)\in(0,\infty)$. Consider the continuous, strictly monotone increasing function $f:[0,\infty)\to [0,\infty)$
\begin{equation}
  f(z)=P(d,D,L)^d z^d (h')^{-1}\left(\frac{ P(d,D,L)z^2}{\e^2}\right) \, .
\end{equation}

Note that $f(0)=0$ and $\lim_{z\to \infty}f(z)=\infty$. Then, there exists a unique $\e \mapsto \tau(\e)$ such that
 $f(\tau(\e))=C$.
It is straightforward to see that $\tau(\e)\to 0$ as $\e \to 0$. We deduce that $|y-x-b(x)|\leq \tau(\e)$.  This completes the proof.

Assume finally that $h(z)=\frac 1 {p-1}(|z|^p-1)$ for some $p>1$. Then $(h')^{-1}(z) = (p^\prime|z|)^\frac 1 {p-1}$. Thus $\tau(\e)$ solves
\begin{align}
P(d,L)^d\tau(\e)^d \frac{(p^\prime P(d,L) \tau(\e)c)^\frac 1 {p-1}}{\e^\frac2 {p-1}} = C\quad\Rightarrow\quad \tau(\e)\leq c(d,L,p) \e^\frac{2}{d(p-1)+2}.
\end{align}
The same calculation shows that $R_c \sim \tau(\e)$.

\step{5. Proof of~\cref{claim:densitybound}} 
Assume for a contradiction that
\begin{align}\label{eq:contradiction}
\inf_{(x',y')\in B(x,y)\cap \supp\;\pi} \max\left(\frac{\dd\pi}{\dd P}(x,y'),\frac{\dd\pi}{\dd P}(x',y)\right)\geq C (r R)^{-d} \, ,
\end{align}
for some $C>0$ to be chosen later.
We denote 
\begin{align}
B_1(x,y) =& \{(x',y')\in B(x,y)\colon \frac{\dd\pi}{\dd P}(x',y)\geq C (r R)^{-d}\}\,,\\
B_2(x,y)=& \{(x',y')\in B(x,y)\colon \frac{\dd\pi}{\dd P}(x,y')\geq C (r R)^{-d}\}.
\end{align}
Set 
\begin{align}
\mathscr A_1 =& \{x'\in \R^d\colon \exists y' \text{ such that } (x',y')\in B_1(x,y)\cap \supp\;\pi\}\, ,\\
 \mathscr A_2 =& \{y'\in \R^d\colon \exists x' \text{ such that } (x',y')\in B_2(x,y)\cap \supp\;\pi\}.
\end{align}

Assume first that $\pi(B_1(x,y)) \geq \frac 1 2 \pi(B(x,y))$. Then, using \eqref{eq:goodSetSize}, we note
\begin{align}
(r R)^d\sim \pi(B_1(x,y))\leq \pi(\mathscr A_1\times \R^d) = \lambda (\mathscr{A}_1) \,.
\end{align}
In particular, we have
\begin{align}
1 =& \int \frac{\dd\pi}{\dd P}(x',y)\dd\lambda(x')\geq \int_{\mathscr{A}_1} \frac{\dd\pi}{\dd P}(x',y)\dd\lambda(x')\geq C (r R)^{-d} \lambda(\mathscr A_1) \gtrsim C \, .
\end{align}
For a sufficiently large choice of $C$ this provides a contradiction. A similar argument works for the case $\pi(B_2(x,y)) \geq \frac 1 2 \pi(B(x,y))$. Since $\pi(B_1(x,y)\cup B_2(x,y))=\pi(B(x,y))$, this completes the proof.

\end{proof}

We now turn to deducing the bias bound \cref{cor:wieselcomparison}.
\begin{proof}[Proof of \cref{cor:wieselcomparison}]
Using \cref{lem:malamut} and \cref{lem:eckstein}, we find
\begin{align}
\int_{\R^d\times \R^d} |y-T(x)|^2 \dd\pi_\e \leq 2L(\int_{\R^d \times \R^d}|x-y|^2 \, \dd{\pi_\e}-OT(\lambda,\mu))\lesssim L \e^\frac 4 {d(p-1)+2} \, .
\end{align}
In particular, if $B_{2R}\subset \supp\;\lambda$,
\begin{align}
E(\pi_\e,2R,T(x)-x)\lesssim LR^{-{d+2}} \e^\frac 4{d(p-1)+2} \, .
\end{align}
Applying \cref{prop:Linfty} with the choice $b(x)=T(x)-x$ to balls of size $R$ and a simple covering argument, we deduce that if $d(x,\partial\;\supp\;\lambda)\geq 2R$,
\begin{align}
\frac{|y-T(x)|}{R}\lesssim \max\left(\left(\frac{\e^\frac 4{d(p-1)+2}}{R^{d+2}}\right)^\frac 1 2, \left( \frac{\e^\frac 4{d(p-1)+2}}{R^{d+2}}\right)^\frac 1 {d+2},\frac{\e^\frac 2{d(p-1)+2}}{R}\right).
\end{align}
The choice $R= \frac 1 2\e^\frac{4}{(d(p-1)+2)(d+2)}$ concludes the proof. Note that $R>R_c$ from~\cref{prop:Linfty} as long as $\e\leq \e_0$ for some $\e_0>0$ sufficiently small.
\end{proof}

Recall that as demonstrated in \cref{rem:support} in the case of entropic optimal transport, minimisers have full support and we cannot expect a result such as \cref{prop:Linfty} to hold. An integral control on the spread of minimisers was obtained in \cite[Proposition 7]{Gvalani2025}. We state a slight refinement of the result here.
\begin{proposition}\label{prop:LinftyEntropic}
Let $\pi$ be a minimiser of \eqref{problem} with $h=h_1$. There are $C,\Lambda>0$ such that if $R\geq C \e$, $\lambda$, $\mu$ are bounded below by $c>0$ and above by $c^{-1}$ on $B_{3R}$,
 then for almost every $x\in B_R$,
\begin{align}
\frac 1 {R^2}\int_{\R^d\cap \{\lvert x-y\rvert \geq \Lambda R\}} \lvert x-y\rvert^2\frac{\dd \pi}{\dd P}(x,y)\dd \mu(y) \lesssim e^{-\frac 1 {\e^2/R^2}}(1+E(\pi_\e,2R)) \, .
\end{align}
The implicit constant depends on $d$ and $c$ only.
\end{proposition}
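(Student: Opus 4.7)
The plan is to follow the strategy of \cite[Proposition 7]{Gvalani2025}, which the statement refines, adapting it to produce the explicit $(1+E(\pi_\e,2R))$ factor. I focus on the entropic case $h(z) = z\log z$, where the estimate is most relevant because $\supp\;\pi = \supp\;P$ (cf. \cref{rem:support}) so that \cref{prop:Linfty} is unavailable.

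The starting point is the explicit representation \eqref{eq:RadonNikodym}, which in the entropic setting reads $\frac{\dd\pi}{\dd P}(x,y) = \exp\bigl((f(x)+g(y)-|x-y|^2)/\e^2 -1\bigr)$. Combined with the Schr\"odinger marginal equation \eqref{eq:Schrodinger}, the left-hand side of the claimed bound reduces to the ratio
$$\frac{\int_{|x-y|\geq \Lambda R} |x-y|^2\, e^{(g(y)-|x-y|^2)/\e^2}\,\dd\mu(y)}{\int e^{(g(y)-|x-y|^2)/\e^2}\,\dd\mu(y)}.$$
The denominator would be lower-bounded by producing a ``good'' set $G \subset B_{C_1 R}(x)$ with $\mu(G) \sim R^d$ on which $\frac{\dd\pi}{\dd P}(x,\cdot)$ is of unit order. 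Such a set is extracted from the quasi-minimality hypothesis $E(\pi_\e,2R)+D(2R)\ll 1$ and the lower bounds on $\lambda,\mu$ via a Chebyshev-type argument: most of the $\mu$-mass coupled to $x$ lies in a ball of radius comparable to $R$. This yields the lower bound $\gtrsim 1/(1+E(\pi_\e,2R))$, accounting for the multiplicative factor in the final estimate.

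For the numerator, the goal is a Gaussian pointwise bound $e^{(g(y)-|x-y|^2)/\e^2} \lesssim e^{-c|x-y|^2/\e^2}$ valid for $|x-y| \geq \Lambda R$. This would be obtained by applying the monotonicity principle \cref{prop:monotonicity} iteratively along a chain of intermediate points between $x$ and $y$ at spacing of order $R$: although for entropic $h$ the monotonicity is in fact an identity at the level of the density, chaining it together with the Schr\"odinger equation at each intermediate point (and using the lower density bounds provided by $G$ at each link) produces a quantitative upper bound on $g(y)$ with loss of order $|x-y|^2/\e^2$. Integrating $|x-y|^2 e^{-c|x-y|^2/\e^2}$ over $\{|x-y|\geq \Lambda R\}$, and invoking the assumption $R \geq C\e$ with $C$ sufficiently large so that the Gaussian tail dominates, produces the claimed bound $R^2 e^{-R^2/\e^2}(1+E(\pi_\e,2R))$ after division by $R^2$.

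The main technical obstacle lies in the chained monotonicity step: since the monotonicity identity is tight in the entropic regime, one cannot read off a Gaussian rate from a single application and must instead iterate carefully over intermediate scales while keeping track of the cumulative error. A secondary difficulty is ensuring that the $(1+E(\pi_\e,2R))$ dependence enters only linearly, which requires the Chebyshev argument used to construct $G$ to be sharp; this is where the refinement over the bound in \cite{Gvalani2025} is achieved.
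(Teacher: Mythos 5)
Your overall reduction to a ratio via the Schr\"odinger equation is fine, and the idea of producing a set of comparable-to-unit density where $y$ stays near $x$ is in the right spirit. However, the central step of your proposal --- obtaining a \emph{pointwise} Gaussian bound $e^{(g(y)-|x-y|^2)/\e^2}\lesssim e^{-c|x-y|^2/\e^2}$ by chaining the monotonicity identity through intermediate points --- is exactly the step you flag as unresolved, and it is a genuine gap, not a bookkeeping issue. In the entropic case the monotonicity of \cref{prop:monotonicity} is an exact identity, so a single application gives a factor $e^{\Delta/\e^2}$ but also swaps in the unknown densities $\frac{\dd\pi}{\dd P}(x,y')$ and $\frac{\dd\pi}{\dd P}(x',y)$; to close a chain one must control those, and that control has to come from somewhere. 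You gesture at ``using the lower density bounds provided by $G$ at each link'' but lower bounds alone are not enough --- one needs matching upper bounds on the swapped densities, and you do not produce them. In fact the proposition only claims the single factor $e^{-R^2/\e^2}$, not a Gaussian tail $e^{-c\Lambda^2 R^2/\e^2}$, which is a strong hint that a chain argument is not what is intended.

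The paper's proof avoids pointwise density estimates entirely. It fixes $x$ with $\int |x-y|^2\frac{\dd\pi_\e}{\dd P}(x,y)\dd\mu(y)<\infty$, introduces a set $A(x,y)$ of competitor pairs $(x',y')$ with three properties: $|x-x'|\lesssim R$, small transport $|x'-y'|^2\lesssim R^2 E(\pi_\e,2R)$, and cross-difference $-\Delta(x,y,x',y')\gtrsim R^2$. The monotonicity identity is applied \emph{once} and then \emph{integrated} against $\dd\pi_\e(x',y')$ over $A(x,y)$; the $\Delta$-condition pulls out the factor $e^{-R^2/\e^2}$, the integrand $|x-y|^2$ is dominated by $|x-y'|^2+|x'-y'|^2+|x'-y|^2$, and these three pieces are estimated by the Schr\"odinger normalisation and the $A(x,y)$-constraints. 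This is where the $(1+E)$ really comes from: $|x-y'|^2$ contributes the $1$ (by the marginal equation), while $|x'-y'|^2+|x'-y|^2$ contributes the $E(\pi_\e,2R)$ (by the smallness of transport on $A$). Your claim that the $(1+E)$ factor arises from a Chebyshev lower bound $\gtrsim 1/(1+E)$ on the denominator is not correct; in the paper the denominator is handled by the input $\pi(A(x,y))\gtrsim R^d$ borrowed from \cite[Proposition 7]{Gvalani2025}, which gives an $E$-independent lower bound. So the decomposition you propose misattributes the source of the $E$-dependence and hinges on an unproved Gaussian pointwise estimate; the paper's averaged single-step argument bypasses both difficulties.
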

\begin{proof}
Note that
\begin{align}
\int_{B_R} \int_{\R^d} |x-y|^2 \frac{\dd \pi_\e}{\dd P}\dd \mu(y)\dd \lambda(x) = \int_{B_R\times \R^d} |x-y|^2 \dd \pi_\e <\infty.
\end{align}
Consequently, for almost every $x\in B_1$,
\begin{align}
\int |x-y|^2 \frac{\dd \pi_\e}{\dd P}\dd \mu(y)<\infty.
\end{align}
Fix now such $x\in B_R$. Define
\begin{align}
A(x,y)\coloneqq& \Big\{(x^\prime,y^\prime)\in \#_R\colon \lvert x-y\rvert^2+\lvert x^\prime-y^\prime\rvert^2-\lvert x^\prime-y\rvert^2-\lvert x-y^\prime\rvert^2 \geq R,\\
&\quad |x-x'|\leq 2R,\lvert x^\prime-y^\prime\rvert^2\leq  R^2 \max(\Lambda E(\pi,2R),1)\Big \}.
\end{align}
Using the monotonicity of $\pi_\e$ from \cref{prop:monotonicity}, we have
\begin{align}
&\frac{\dd \pi_\e}{\dd P}(x,y)\frac{\dd \pi_\e}{\dd P}(x^\prime,y^\prime)\leq  e^{-\frac{R^2} {\e^2} \Delta(x,y,x',y')}\frac{\dd \pi_\e}{\dd P}(x,y^\prime)\frac{\dd \pi_\e}{\dd P}(x^\prime,y).
\label{eq:approxcm}
\end{align}
In particular, using the definition of $A(x,y)$ and \eqref{eq:approxcm}, we find
\begin{equation}\label{eq:pro4step1}
\begin{split}
&\int\int_{A(x,y)}1_{|x-y|\geq \Lambda R}|x-y|^2\frac{\dd\pi_\e}{\dd P}(x,y) \dd \pi_\e(x^\prime,y^\prime)\dd \mu(y)\\
\leq& e^{-\frac{R^2}{\e^2}}\int \mathds 1_{\{\lvert x-y\rvert\geq \Lambda R\}\times A(x,y)}\lvert x-y\rvert^2\frac{\dd \pi_\e}{\dd P}(x,y^\prime)\dd \pi_\e(x^\prime,y)\dd\mu(y^\prime)\\
\lesssim& e^{-\frac {R^2}{\e^2}} \int \mathds 1_{\{\lvert x-y\rvert\geq \Lambda R\}\times A(x,y)} \lvert x-y^\prime\rvert^2 \frac{\dd \pi_\e}{\dd P}(x,y^\prime)\dd \mu(y^\prime)\dd \pi_\e(x^\prime,y)\\
&\quad + e^{-\frac {R^2}{\e^2}} \int \mathds 1_{\{\lvert x-y\rvert\geq \Lambda R\}\times A(x,y)}(\lvert x^\prime-y^\prime\rvert^2+\lvert x^\prime-y\rvert^2)\dd \mu(y^\prime)\dd \pi_\e(x^\prime,y)\\
=& I + II.
\end{split}
\end{equation}
The Schr\"odinger equation \eqref{eq:Schrodinger} allows us to deduce that
\begin{align}
II \lesssim e^{-\frac {R^2}{\e^2}}R^{d+2}(1+ E(\pi_\e,2R)).
\end{align}
Further,
\begin{align}
I\leq&  e^{-\frac{R^2} {\e^2}}(10R^2+2\Lambda R^2 E(\pi_e,2R))\int 1_{\{|x-y|\geq \Lambda\}\times A(x,y)} \frac{\dd\pi_\e}{\dd  P}(x,y') \dd\pi_\e(x',y)\dd\mu(y')\\
\lesssim& e^{-\frac {R^2} {\e^2}}R^{d+2}(1+E(\pi_e,2R)).
\end{align}
The proof of \cite[Proposition 7]{Gvalani2025} shows that $\pi(A(x,y))\gtrsim R^d$, which concludes the proof.
\end{proof}

\section{Large-scale regularity}\label{sec:largescale}
In this section we apply the large-scale regularity for almost-minimisers as developed in \cite{Gvalani2025} in order to obtain a large-scale $\e$-regularity result. Introduce
\begin{align}
\hat E(\pi,R)\coloneqq E(\pi,R) +1_{p>1}\frac{\e^2}{R^{d+2}} \int_{\#_R} h_p\left(\frac{\dd \pi}{\dd (\Pi_x\pi \otimes \Pi_y\pi) }\right) \dd (\Pi_x\pi \otimes \Pi_y\pi).
\end{align}
\begin{theorem}\label{thm:largescale} 
Fix $p \in [1,2]$ and $R_0>0$. Let $\lambda,\mu$ have $C^{0,\alpha}$-densities on $B_{R_0}$ for some $\alpha\in (0,1)$ and be bounded away from $0$ on $B_{R_0}$. Assume $\lambda(0)=\mu(0)=1$. Furthermore, let $\pi\in \Pi(\lambda,\mu)$ be a minimiser of the optimal transport problem \eqref{problem} with $h(z)= h_p(z)$. Then, there exists some $\e_1>0$ such that if
\begin{align}
\hat E(\pi,R_0)+D_{\lambda,\mu}(R_0)+\frac{\e^2}{R_0^2} <\e_1,
\label{eq:smallnessentropic}
\end{align}
then for any  $R_c \ll r\leq R_0$, we have
\begin{align}\label{eq:largeScaleReg}
\min_{A\in \R^{d \times d},b\in \R^d} \frac 1 {r^{d+2}}\int_{\#_r} \lvert y-Ax-b\rvert^2 \dd \pi\lesssim \hat E(\pi,R_0)+D_{\lambda,\mu}(R_0)+\frac{\e^\frac 4{p(d-1)+2}}{r^{2}} \, .
\end{align}
\end{theorem}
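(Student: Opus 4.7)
The strategy is to apply the Campanato iteration framework for almost-minimisers of the quadratic optimal transport problem developed in \cite{Gvalani2025}. That framework takes as its inputs (i) quasi-minimality of the candidate plan $\pi$ with respect to unregularised OT, and (ii) a local $L^\infty$-control on the support of $\pi$ that handles long trajectories at scales above a critical length. Both inputs are now available in our setting: quasi-minimality with energy defect $\kappa(\e) \lesssim \e^{4/(p(d-1)+2)}$ follows from \cref{lem:eckstein}, while the $L^\infty$-control is provided by \cref{prop:Linfty}, valid precisely at scales $r \gg R_c$.

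The plan is to iterate a one-step harmonic-approximation improvement at dyadic scales $r_k = \theta^k R_0$ for a small contraction factor $\theta \in (0,1/4)$, stopping once $r_k$ approaches $R_c$. At each step I would apply the affine normalisation of \cref{sec:affinechanges} to reduce to a configuration with unit density at the origin, and then establish a decay relation of the form
\begin{align}
\mathcal{E}(\pi, \theta r) \lesssim \theta^2 \, \mathcal{E}(\pi, r) + D_{\lambda,\mu}(r) + (\text{term from the quasi-minimality defect}),
\end{align}
where $\mathcal{E}(\pi, r) = \min_{A,b} r^{-(d+2)} \int_{\#_r} |y - Ax - b|^2 \, \dd\pi$. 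The factor $\theta^2$ is the standard gain from the harmonic approximation of quadratic OT minimisers, in the spirit of \cite{Goldman2020, Otto2017}; the term $D_{\lambda,\mu}(r)$ captures the H\"older-regularity defect of the marginals; and the last term absorbs the energy gap $\kappa(\e)$ together with the error from truncating the plan to $B_r \times \R^d$, using the support control of \cref{prop:Linfty}. Summing this recursion geometrically from $R_0$ down to any $r \gg R_c$ produces the stated bound, with the residual term $\e^{4/(p(d-1)+2)}/r^2$ emerging from the accumulated regularisation defect across scales.

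The principal obstacle is to adapt the iteration of \cite{Gvalani2025}, which was developed there for entropic regularisation (where the critical scale is $\e$ and the energy gap is of order $\e^2 |\log \e|$), to the polynomial setting, where the critical scale $R_c \sim \e^{2/(d(p-1)+2)}$ and the energy gap $\kappa(\e) \sim \e^{4/(p(d-1)+2)}$ have different scalings. The main technical modifications are to substitute the entropic spread-control of \cref{prop:LinftyEntropic} by the sharp support bound \cref{prop:Linfty} at each step, and to verify that the rescaling $\e \mapsto \e \gamma^{-1/2}$ from \cref{sec:affinechanges} keeps the iteration self-similar and the termination at $R_c$ consistent with the scaling of $\kappa(\e)$. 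Since the key new ingredients (\cref{prop:Linfty} and \cref{lem:eckstein}) are already in place, this adaptation is largely mechanical, but careful bookkeeping of constants through the affine changes of coordinates will be required.
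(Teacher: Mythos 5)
Your overall strategy matches the paper's: verify Assumptions (6) and (7) of the almost-minimiser framework of \cite{Gvalani2025} and then invoke its Campanato iteration, using \cref{prop:Linfty} for the control of long trajectories and \cref{lem:eckstein} for the energy defect. However, the claim that the quasi-minimality input ``follows from \cref{lem:eckstein}'' and that the remaining adaptation is ``largely mechanical'' substantially undersells the main technical content. The local quasi-minimality statement \eqref{eq:quasimin} is at a fixed large scale $R$ and in arbitrary rescaled coordinates, and it is emphatically not a direct consequence of the global energy-gap bound. One must construct a competitor by gluing $\restr{\pi}{\#_R^c}$ with (a multiple of) the solution of the \emph{regularised} problem $OT_{h_p}(\overline\lambda,\overline\mu)$ between the normalised local marginals $\overline\lambda,\overline\mu$, and only then apply \cref{lem:eckstein} to this local regularised problem. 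The delicate step is that the two pieces of the glued competitor overlap on an interface set $\mathscr A$, and the regularisation energy on $\mathscr A$ does not simply split; controlling the cross term requires using \cref{prop:Linfty} to localise $\mathscr A$ to a thin annular shell of thickness $CR\max\bigl(E(\pi,2R)^{1/(d+2)},R^{-1}\e^{2/(d(p-1)+2)}\bigr)$, and then a Young-type inequality at the level of the integrand $|\cdot|^p$. Your proposal mentions a ``truncation error'' but does not identify this as the crux of the argument.

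A second point you omit: the one-step decay and the iteration cannot be run for the quadratic excess $E$ alone. Because the competitor is built from a regularised, not unregularised, local plan, the defect on the right-hand side of \eqref{eq:quasimin} is naturally controlled by a \emph{modified} excess
\begin{align}
\hat E(\pi,R) = E(\pi,R) + R^{-(d+2)}\int_{\#_R} h\left(\frac{\dd\pi}{\dd(\Pi_x\pi\otimes\Pi_y\pi)}\right)\dd(\Pi_x\pi\otimes\Pi_y\pi)\,,
\end{align}
which augments $E$ by the local regularisation energy, and the Campanato iteration must be stated and propagated for $\hat E$. Without this modification the $\delta$-absorption in the one-step improvement does not close. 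So the approach is right and all the ingredients you name are indeed the ones used, but the two points above are not bookkeeping: they are the part of the proof that actually has to be supplied on top of \cite{Gvalani2025}.
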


\begin{proof}
The case $p=1$ is proven in \cite[Theorem 1]{Gvalani2025}, so we focus on $p\in (1,2]$.

The result will follow from an application of \cite[Theorem 3]{Gvalani2025}. Hence, we need to check that large-trajectories are controlled and that minimisers are local almost minimisers of the quadratic energy (Assumptions (6) and (7) in \cite{Gvalani2025}): Using the notation of \cref{sec:affinechanges}, we need to prove that for any $\delta>0$, some $\alpha>0$, and any $R_c \ll R \leq R_0$:
\begin{itemize}
\item If $\pi\in \Pi(\lambda,\mu)$ is a minimiser of \eqref{problem}, for any $\mathsf s\in \mathscr S$ such that $E(\pi_{\mathsf s},2R)\ll 1$ and any $\hat \pi = \tilde \pi + \restr{\pi_{\mathsf s}}{\#_R^c}\in \Pi(\lambda_{\mathsf s},\mu_{\mathsf s})$,
\begin{align}\label{eq:quasimin}
&\int_{\#_R} |x-y|^2 \dd\pi_{\mathsf s}+\e^2 \int_{\#_R} h\left(\frac{\dd\pi}{\dd P_{\mathsf s}}\right)\dd P_{\mathsf s}-\int |x-y|^2 \dd\tilde \pi 
\leq C \e^\frac 4{d(p-1)+2} \pi_{\mathsf s}(\#_R)\\ &\,\qquad\\ &\,\qquad +\delta \left(\int_{\#_{2R}} |x-y|^2 \dd\pi_{\mathsf s}+\e^2 \int_{\#_{2R}}h\left(\frac{\dd \pi_{\mathsf s}}{\dd P_{\mathsf s}}\right)\dd P_{\mathsf s}\right).
\end{align}
Here $P_{\mathsf s} = \lambda_{\mathsf s}\otimes \mu_{\mathsf s}$.
\item If $\pi\in \Pi(\lambda,\mu)$ is a minimiser of \eqref{problem}, then there exist $\Lambda>0$ such that for all $\mathsf s\in \mathscr S$, if $E(\pi_{\mathsf s},2R)+D_{\lambda_{\mathsf s},\mu_{\mathsf s}}(2R)\ll 1$, then
\begin{align}
\frac 1 {R^{d+2}} \int_{\#_R\cap \{|x-y|\geq \Lambda R\}}|x-y|^2 \dd\pi_{\mathsf s}\leq C \delta E(\pi_{\mathsf s},2R).
\end{align}
\end{itemize}
In fact, the first condition appears in \cite{Gvalani2025} without the terms involving $h$. Denote the marginals of $\pi\in \mathscr M(\R^d\times \R^d)$ by $\Pi_x \pi$ and $\Pi_y \pi$, respectively. Arguing exactly as in the proof of \cite[Theorem 6]{Gvalani2025}, \eqref{eq:quasimin} allows to establish a one-step improvement of the form: For any $\beta\in (0,1)$, there exists $\theta\in (0,1)$ and $\mathsf s_1\in \mathcal S$ such that
\begin{align}
\hat E(\pi_{\mathsf s_1},\theta R)\leq \theta^{2\beta}\hat E(\pi_{\mathsf s},10R)+C_\theta  D_{\lambda_{\mathsf s},\mu_{\mathsf s}}(10R)+C_\theta R^{-2}\e^\frac 4 {d(p-1)+2}.
\end{align}
The Campanato iteration employed in \cite[Theorem 3]{Gvalani2025} now gives \eqref{eq:largeScaleReg} without any changes to the argument, except to replace $E$ with $\hat E$.

In light of the discussion of \cref{sec:affinechanges}, it suffices to prove the claims for the case $\mathsf s = (\textup{Id},0,1,1)$. Hence, we consider only this case and drop the subscript. The second item is then a direct consequence of \cref{prop:Linfty}.

Hence, it remains to prove the local quasi-minimality of minimisers with respect to the quadratic cost \eqref{eq:quasimin}. Define the local marginals
\begin{align}
\overline \lambda(A) = \restr{\pi}{\#_R}(A\times \R^d),\quad \overline \mu(A) = \restr{\pi}{\#_R}(\R^d\times A).
\end{align}
Note that $\overline \lambda \leq \lambda, \overline \mu\leq \mu$. Let $\overline \pi= \textup{argmin}\; OT_h(\overline \lambda,\overline \mu)$. We remind the reader that in our notation $OT_h(\overline \lambda,\overline \mu)$ is considered with respect to the reference measure $\overline P = \overline \lambda\otimes \overline \mu$.  Denote $\mathscr A=\supp\; \restr{\pi}{\#_R^c} \cap \supp\;\overline \pi$. 

Consider $\hat \pi =\restr{\pi}{\#_R^c}+\overline \pi$. Note that $\Pi_x \hat \pi  = \lambda$, $\Pi_y \hat \pi  = \mu$. Hence,
\begin{align}
&\int |x-y|^2 \dd\hat\pi + \e^2 \int h\left(\frac{\dd\hat\pi}{\dd P}\right)\dd P\\
=& \int_{\#_R^c} |x-y|^2 \dd\pi + \e^2 \int_{\#_R^c\setminus \mathscr A} h\left(\frac{\dd\pi}{\dd P}\right)\dd P + \int |x-y|^2 \dd \overline \pi \\
&\quad+ \e^2 \int_{(\R^d\times \R^d)\setminus\mathscr A} h\left(\frac{\dd\overline \pi}{\dd P}\right)\dd P + \e^2\int_{\mathscr A} h\left(\frac{\dd(\restr{\pi}{\#_R^c} + \overline \pi)}{\dd P}\right) \, \dd P \\
\leq& \int_{\#_R^c} |x-y|^2 \dd\pi + \e^2 \int_{\#_R^c\setminus \mathscr A} h\left(\frac{\dd\pi}{\dd P}\right)\dd P + OT_h(\overline \lambda,\overline \mu) \\
&\quad + \e^2\int_{\mathscr A} h\left(\frac{\dd(\restr{\pi}{\#_R^c} + \overline \pi)}{\dd P}\right)\dd P + C\e^2 R^d \,.\label{eq:localMinimalityMain}
\end{align}

We now estimate the last term as follows: Note that due to \cref{prop:Linfty}, $\mathscr A\subset (B_{R+r}\times B_{R+r})\setminus \left(B_{R-r}\times B_{R-r}\right)$ for $r=C R \max\left(E(\pi,2R)^\frac 1 {d+2},R^{-1} \e^\frac 2 {d(p-1)+2}\right)$. Consequently, for any $\delta>0$,
\begin{align}\label{eq:lastTerm}
&\int_{\mathscr A} h\left(\frac{\dd(\restr{\pi}{\#_R^c} + \overline \pi)}{\dd P}\right)\dd P\leq \frac 1 {p-1}\int_{\mathscr A} \left| \frac{\dd(\restr{\pi}{\#_R^c} + \overline \pi)}{\dd P}\right|^p\dd P+ C \e^2 R^d\\
\leq & \, \frac{1+\delta}{p-1} \int_{\mathscr A} \left| \frac{\dd \restr{\pi}{\#_R^c}}{\dd P}\right|^p\dd P + C(\delta) \int_{\mathscr A} \left| \frac{\dd\overline \pi}{\dd\overline P}\right|^p\dd \overline P +C R^d\e^2\,.
\end{align}
In order to apply \cref{lem:eckstein}, we rescale $\bar \pi$. Consider
\begin{align}
\tilde \pi = \pi(\#_R)^{-1}(x/R,y/R)_{\#} \bar \pi \in \Pi(\tilde \lambda,\tilde \mu)\coloneqq \Pi\left(\pi(\#_R)^{-1} (x/R)_{\# }\bar \lambda,\pi(\#_R)^{-1} (y/R)_{\# }\bar \mu\right).
\end{align}
 Note that $\tilde \lambda$ and $\tilde \mu$ are probability measures, supported in $B_2$ and have bounded densities. We compute
\begin{align}
&\int |x-y|^2 d \overline \pi + \frac{\e^2}{p-1} \int \left(\left\lvert \frac{\dd\overline \pi}{\dd \overline P} \right \rvert^p-1\right) \dd \overline P\\
=& \pi(\#_R) R^2 \int |x-y|^2 \dd\tilde \pi+\frac{\e^2 \pi(\#_R)^{2-p}}{p-1} \int \left(\left\lvert \frac{\dd\tilde \pi}{\dd \tilde P} \right \rvert^p-1\right) \dd \tilde P\\
&\quad+\frac{\e^2}{p-1}(\pi(\#_R)^{2-p}-\pi(\#_R)^2)
\end{align}
In particular $\tilde \pi$ minimises $OT_h(\tilde \lambda,\tilde \mu)$ with regularisation parameter 
\begin{align}
\tilde \e^2 = \e^2 \pi(\#_R)^{1-p}R^{-2}\sim \e^2 R^{d(1-p)-2} \ll 1
\end{align}
 as $R \gg R_c$. Applying \cref{lem:eckstein}, we deduce that there is $K>0$ depending only on $p$, $d$ and the bounds on the densities of $\lambda,\mu$ such that
\begin{align}\label{eq:quasiminimalityApplied}
OT_h(\overline \lambda,\overline \mu) \leq \pi(\#_R)R^2 \left(OT(\tilde \lambda,\tilde \mu)+K\tilde \e^\frac 4 {d(p-1)+2}\right)\leq OT(\overline \lambda,\overline \mu)+K C \e^\frac 4 {d(p-1)+2} R^d.
\end{align} 

Combining \eqref{eq:localMinimalityMain} with \eqref{eq:lastTerm}, re-arranging and using \eqref{eq:quasiminimalityApplied}, we find
\begin{align}
&\int_{\#_R} |x-y|^2 \dd \pi+\e^2 \int_{\#_R} h\left(\frac{\dd \pi}{\dd P}\right) \dd P \\
\leq& \delta\left(\int_{\#_{2R}} |x-y|^2 \dd\pi+\e^2 \int_{\#_{2R}} h\left(\frac{\dd \pi}{\dd P}\right)\dd P\right) + OT_h(\overline \lambda,\overline \mu) + C(\delta)R^d\e^2 \\
&\, + C(\delta) \e^2 \int_{\mathscr A} h\left( \frac{\dd\overline \pi}{\dd\overline P}\right)\dd \overline P \\ 
\leq& \delta\left(\int_{\#_{2R}} |x-y|^2 \dd\pi+\e^2 \int_{\#_{2R}} h\left(\frac{\dd \pi}{\dd P}\right)\dd P\right) + OT(\overline \lambda,\overline \mu) + C(\delta) R^d\e^\frac 4 {d(p-1)+2}.
\end{align}
This establishes \eqref{eq:quasimin} and completes the proof.
\end{proof}

\section{Small-scale regularity}\label{sec:smallscale}
The goal of this theory is to provide a Lipschitz estimate on $T_\e$ at the critical scale $\e^\frac 2 {d(p-1)+2}$. The proof relies on the qualitative theory of minimisers to \eqref{problem} explained in \cref{sec:qualitative}.

\begin{proposition}\label{prop:smallScale}
Let $p\in [1,2]$. Suppose $\pi$ is a minimiser of \eqref{problem} with $h(z)=h_p(z)$. Then there is $\e_1>0$ and $C=C(p,d)>0$ such that if $\e \leq \e_1$ and $r= C  \e^\frac {2}{d(p-1)+2}$, $\lambda,\mu$ are bounded above and below on $B_r$ and $E(\pi,B_r)\ll 1$, then
$T_\e$ is Lipschitz in $B_{r/3}$ with a Lipschitz bound independent of $\e$:
\begin{align}
\esssup[x\in B_{r/3}] |\nabla T_\e]\lesssim 1 \, .
\end{align}
If $p<2$, $T_\e\in C^2(B_{r/3})$ and the essential supremum may be replaced by a supremum.
\end{proposition}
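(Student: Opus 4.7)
The plan is to rescale the problem to unit scale and then compute and bound $\nabla T_\e$ directly via the Brenier-type formula from \cref{lem:concavity}.

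First, I would use the affine rescaling from \cref{sec:affinechanges} (combined with a mass normalisation) to reduce to a minimiser $\tilde\pi$ of $OT_{h_p}(\tilde\lambda,\tilde\mu)$ on $B_1$, with $\tilde\lambda,\tilde\mu$ bounded above and below, effective regularisation parameter $\tilde\e\sim 1$, and $E(\tilde\pi,1)\ll 1$. The choice $r=C\e^{2/(d(p-1)+2)}$ is precisely the critical scale at which this normalisation yields $\tilde\e\sim 1$: a direct calculation gives $\tilde\e^2 = \e^2 r^{-(d(p-1)+2)}\sim 1$ for this $r$. Since the Lipschitz constant of $T_\e$ is scale invariant, it suffices to bound $|\nabla\tilde T|$ uniformly on $B_{1/3}$.

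Second, for $p\in(1,2)$, the Brenier-type formula from \cref{lem:concavity} gives
\[
\tilde T(u)=\tilde D(u)^{-1}\int v\,\tilde\psi_+^{p'-2}(u,v)\,\dd\tilde\mu(v),\qquad \tilde D(u)=\int\tilde\psi_+^{p'-2}(u,v)\,\dd\tilde\mu(v).
\]
Differentiating under the integral using $\nabla_u\tilde\psi=2(v-\tilde T(u))$ yields
\[
\nabla\tilde T(u)=\frac{2(p'-2)}{\tilde D(u)}\int(v-\tilde T(u))^{\otimes 2}\,\tilde\psi_+^{p'-3}(u,v)\,\dd\tilde\mu(v),
\]
with the integrand integrable since $p'-3>-1$. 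To bound this by a universal constant I would combine: (a) $|v-\tilde T(u)|\lesssim 1$ on the support, via \cref{prop:Linfty} applied with $E\ll 1$ and $\tilde\e\sim 1$; (b) $\tilde D(u)\gtrsim 1$, from the Schr\"odinger equation~\eqref{eq:Schrodinger} and the lower bound on $\tilde\mu$; and (c) $\int\tilde\psi_+^{p'-3}\,\dd\tilde\mu\lesssim (p'-2)^{-1}$, via the coarea formula and a lower bound on $|\nabla_v\tilde\psi|$ coming from the Brenier formula applied in the $v$-variable. The $(p'-2)^{-1}$ factor in (c) cancels the prefactor in the expression for $\nabla\tilde T$, yielding $|\nabla\tilde T|\lesssim 1$ uniformly in $p\in(1,2)$. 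Continuity of the integrand gives the additional $C^1$-regularity.

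Third, the endpoints. $p=1$ (entropic) is handled analogously with exponential weights $e^{\tilde\psi/\tilde\e^2}$ replacing $\tilde\psi_+^{p'-2}$, support control via \cref{prop:LinftyEntropic}, and no integrability issues. For $p=2$, the weight $\tilde\psi_+^{p'-3}=\tilde\psi_+^{-1}$ is non-integrable and direct differentiation under the integral fails; I would approximate $h_2$ by $h_{p_n}$ with $p_n\nearrow 2$ and pass to the limit in the $p$-uniform bound from Step~2, obtaining the essential-sup Lipschitz bound (and losing $C^1$-regularity, matching the statement of the proposition).

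The main technical obstacle will be step (c): the sharp bound $\int\tilde\psi_+^{p'-3}\,\dd\tilde\mu\lesssim (p'-2)^{-1}$ requires quantitative control on the level sets of $\tilde\psi$ near $\{\tilde\psi=0\}$ and a uniform lower bound on $|\nabla_v\tilde\psi|$ there. This is where the lower bound on $\tilde\lambda$ and the Brenier formula applied in the $v$-variable enter, and getting a $p$-uniform constant so that the limit $p\nearrow 2$ argument works is the most delicate point.
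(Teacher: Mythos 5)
Your rescaling to unit scale is a valid and somewhat cleaner presentation than the paper's, but it needs to be set up with care: the effective parameter you claim, $\tilde\e^2 = \e^2 r^{-(d(p-1)+2)}$, is \emph{not} the naive $\e^2/r^2$ that one obtains by dilating both coordinates; it only emerges after you also absorb the density normalisation $\kappa=r^{-d}$ into the density $\tilde Z = r^d Z$ and use the explicit homogeneity $h_p(r^d Z) = r^{dp}h_p(Z) + \text{const}$. If you write this out (transport term $\mapsto r^{-(d+2)}$, regularisation term $\mapsto r^{-2d}\cdot r^{dp}$, ratio $r^{d(p-1)+2}$) the calculation is correct and does give $\tilde\e\sim 1$ at the critical scale, with the rescaled Schr\"odinger equation $\int\tilde Z\,\dd\tilde\lambda = 1$ intact and $\tilde\lambda,\tilde\mu$ of density $\sim 1$. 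The paper does \emph{not} rescale: it works at scale $r$ and tracks $L_\infty\sim r$ and $\e$ explicitly, combining them at the end via the Schr\"odinger relation $\e^{2(q-1)}\lesssim \psi_m^{q-1}L_\infty^d$; both roads lead to the same algebra. Your clean one-term formula $\nabla\tilde T(u)=2(q-2)\tilde D(u)^{-1}\int(v-\tilde T(u))^{\otimes 2}\tilde\psi_+^{q-3}\,\dd\tilde\mu$ is also correct and is an algebraic simplification of the paper's $I+II$.

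Two genuine issues. First, in step (c) you propose the coarea formula with a \emph{lower} bound on $|\nabla_v\tilde\psi|$ near $\{\tilde\psi=0\}$. Such a lower bound is not available: $\nabla_v\tilde\psi = 2(\tilde u - \hat T_\e(\tilde v))$, which vanishes at the maximiser and has no uniform lower bound on the level sets of $\tilde\psi$ near the boundary of $S(u)$. The paper avoids this by exploiting concavity together with the \emph{upper} bound $|\nabla_v\tilde\psi|\lesssim 1$ to get the pointwise linear lower bound $\tilde\psi(u,v)\geq\tilde\psi_m - c|v-v_m|$, and then integrates $\tilde\psi_+^{q-3}$ against this bound directly, producing the Beta-function constant $B(q-2,d)\sim(q-2)^{-1}$ you correctly identify. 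You should replace the coarea/lower-bound route by this pointwise argument.

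Second, the $p=2$ endpoint. You propose to approximate $h_2$ by $h_{p_n}$, $p_n\nearrow 2$, and pass to the limit in the $p$-uniform Lipschitz bound. This is a genuinely different route from the paper, which instead derives a boundary-integral (Gauss--Green) representation
\[
\tfrac12\nabla T_\e(x) = |S(x)|^{-1}\int_{\partial S(x)}\frac{\partial_x\psi}{|\partial_x\psi|}\otimes(y-x)\,\dd\mathscr H^{d-1}_\mu - |S(x)|^{-2}\int_{\partial S(x)}\frac{\partial_x\psi}{|\partial_x\psi|}\dd\mathscr H^{d-1}_\mu\otimes\int_{S(x)}(y-x)\,\dd\mu
\]
and bounds it via $|\partial S(x)|/|S(x)|$ together with a lower bound on $|S(x)|$ from the Schr\"odinger equation. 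Your limiting argument has a gap: you would need to prove that minimisers $\pi_{\e,p_n}$ of $OT_{h_{p_n}}(\lambda,\mu)$ and the associated maps $T_{\e,p_n}$ converge to $\pi_{\e,2}$ and $T_{\e,2}$ locally uniformly as $p_n\nearrow 2$ (at fixed $\e$), and this convergence is not established in the paper nor in the references it cites (which concern $\e\to 0$, not $p\to 2$). You would also have to reconcile the $p$-dependent critical radius $r=C(p,d)\e^{2/(d(p-1)+2)}$ across the approximating sequence. The paper's direct boundary-integral argument for $p=2$ avoids all of this and is substantially simpler.
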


\begin{proof}
Set $S(x) = \{y\colon (x,y)\in \supp\;\pi\}$ and $\hat S(y) = \{x\colon (x,y)\in \supp\;\pi\}$. Let $\delta\in (0,1/6)$. For $p>1$, choosing $C>0$ sufficiently large, \cref{prop:Linfty} ensures that
 
\begin{align}
\sup_{x\in B_{r/3}, y\in S(x), \hat x\in \hat S(y)} |\hat x-y|\leq \delta r  \eqqcolon L_\infty.
\end{align}
We now divide our analysis into three cases, $p\in(1, 2)$, $p=2$ and $p=1$. We set $q=p'$ for the rest of the proof. We note that in terms of $q$, $r=C \e^\frac{2(q-1)}{d+2(q-1)}$. We also now fix some $x \in B_{r/3}$.

\step{1. $p\neq 2$}  Note that we may use \cref{lem:concavity} to rewrite $T_\e$ as
\begin{align}\label{eq:firstDerivative}
T_\e(x)-2x = 2\left(\int \psi(x,y)_+^{q-2}\dd\mu(y)\right)^{-1}\int (y-x) \psi(x,y)_+^{q-2}\dd\mu(y)= \nabla f_\e(x).
\end{align}
We deduce,
\begin{align}\label{eq:boundFirstDerivative}
|\nabla f_\e(x)| \leq 2 L_\infty \, .
\end{align}
\cref{lem:concavity} shows that $\psi$ is a concave function of $x$. This justifies differentiating \eqref{eq:firstDerivative} to obtain for almost every $x\in B_{r/3}$,
\begin{align}
&\frac 1 2\nabla T_\e(x)\\
 =& (q-2)\left(\int_{S(x)}\psi(x,y)^{q-2} \dd\mu(y)\right)^{-1} \int_{S(x)} (y-x)\otimes\nabla_x \psi(x,y) \psi(x,y)^{q-3} \dd\mu(y)\\
&+(q-2)\left(\int_{S(x)}\psi(x,y)^{q-2} \dd\mu(y)\right)^{-2}\int_{S(x)} \nabla_x \psi(x,y)\psi(x,y)^{q-3}\dd\mu(y)\\
&\quad\otimes \int_{S(x)} (y-x) \psi(x,y)^{q-2} \dd\mu(y)= I + II.
\end{align}
We fix now $x\in B_{r/3}$ for which the above identity holds.
We begin by noting that
\begin{align}\label{eq:boundDerivpsi}
\sup_{y \in S(x)}|\nabla_x \psi(x,y)|= |\nabla f_\e(x) - 2(x-y)|\leq |\nabla f_\e(x)|+2\sup_{y\in S(x)}|x-y|\leq 4 L_\infty \, ,
\end{align}
where we used \eqref{eq:boundFirstDerivative}.

Let $\psi_m $ be the maximum of $\psi(x,\cdot)$. As $\psi(x,\cdot)$ is concave on $S(x)$ and not constant, this is well-defined and moreover assumed at a point $y_m \in S(x)$. By concavity again, we find
\begin{align}
\psi_m = \psi(x,y_m)\leq \psi(x,y)+\langle \nabla_y \psi(x,y),y-y_m\rangle
\end{align}
for any $y\in S(x)$. We deduce, using a version of  \eqref{eq:boundFirstDerivative} for $g_\e$, that there is $c>0$ such that
\begin{align}
\label{eq:concaveUsed}
\psi(x,y)\geq \psi_m - c L_\infty |y-y_m|.
\end{align}

Consequently, as $q\geq 2$, we may estimate
\begin{align}\label{eq:lowerBoundSmallScale}
\int \psi(x,y)_+^{q-2} \dd \mu(y)\gtrsim& \int (\psi_m - c L_\infty |y-y_m|)_+^{q-2} \dd y\\
\gtrsim& \int (\psi_m - c L_\infty t)_+^{q-2} t^{d-1} \dd t\gtrsim \frac{\psi_m^{q-2+d}}{L_\infty^d}.
\end{align}
Further note for $k\leq q$, using \cref{prop:Linfty},
\begin{align}
\int_{S(x)} \psi(x,y)^{q-k} \dd \mu(y)\lesssim \psi_m^{q-k} |S(x)|\lesssim \psi_m^{q-k} L_\infty^d.
\end{align}
Finally, if $q\in (2,3)$, we estimate using \eqref{eq:concaveUsed},
\begin{align}
\int_{S(x)} \psi(x,y)^{q-3} \dd\mu(y)\lesssim \int (\psi_m - c L_\infty |y-y_m|)_+^{q-3} \dd y\lesssim& \frac{\psi_m^{q-3+d}}{L_\infty^d}.
\end{align}
Collecting estimates for $q\geq 3$, we have shown
\begin{align}\label{eq:Estimate1}
I + II \lesssim \frac{L_\infty^2 \psi_m^{q-3}L_\infty^d}{\psi_m^{q-2+d} L_\infty^{-d}}= \frac{L_\infty^{2d+2}}{\psi_m^{d+1}}.
\end{align}
Recall that due to \eqref{eq:Schrodinger},
\begin{align}\label{eq:SchrodingerUsed}
\e^{2(q-1)} \lesssim \int \psi(x,y)_+^{q-1}\dd\mu(y)\leq \psi_m^{q-1} |S(x)|\lesssim \psi_m^{q-1} L_\infty^d.
\end{align}
Combining \eqref{eq:Estimate1} and \eqref{eq:SchrodingerUsed} gives
\begin{align}\label{eq:finalEstimateSmallScale}
I+ II \lesssim \frac{L_\infty^{2d+2}}{\e^{2(d+1)}L_\infty^{-\frac{d(d+1)} {q-1}}} = \frac{L_\infty^{2d+2+\frac{d(d+1)} {q-1}}}{\e^{2(d+1)}}\lesssim 1,
\end{align}
by definition of $L_\infty$ and the choice of $r$. In case $q<3$, collecting estimates and using \eqref{eq:SchrodingerUsed}, we find
\begin{align}
I + II \lesssim \frac{L_\infty^2 \psi_m^{q-3+d}L_\infty^{-d}}{\psi_m^{q-2+d} L_\infty^{-d}}= \frac{L_\infty^2}{\psi_m}\lesssim \frac{L_\infty^{2+\frac d {q-1}}}{\e^2}\lesssim 1.
\end{align}

It remains to prove that $T_\e$ is indeed everywhere differentiable. In order to see this, it suffices to prove that $I+II$ is continuous as a function of $x$. For $q\geq 3$, this is obvious. On the other hand, for $q\leq 3$, following the arguments leading up to \eqref{eq:lowerBoundSmallScale}, we find for $a>0$,
\begin{align}
\int_{S(x)\cap \{\psi(x,y)\leq a\}} \psi(x,y)^{q-3}\dd \mu(y)\lesssim \frac{\psi_m(x)^{q-3+d}}{L_\infty^d}\int_0^{a} (1-t)^{q-3}t^{d-1}\dd t.
\end{align}
In particular, since $\psi$ is a $C^1$-function of $x$,
\begin{align}
&\sup_{x\in \overline B_{r/3}} \int_{S(x)\cap \{\psi(x,y)\leq a\}} \psi(x,y)^{q-3}\dd \mu(y)\\
\lesssim& \frac{(\sup_{x\in \overline B_{r/3}}\psi_m(x))^{q-3+d}}{L_\infty^d}\int_0^{a} (1-t)^{q-3}t^{d-1}\dd t<\infty.
\end{align}
We deduce that $\int_{S(x)} \psi(x,y)^{q-3}\dd\mu(y)$ is uniformly integrable in $x$ on $\overline B_{r/3}$ from which, in combination with the bounds obtained above, it follows that $I+II$ is continuous on $B_{r/3}$ by Vitali's convergence theorem.

\step{2. $p=2$} The case $p=2$ is slightly simpler. We find arguing as before, that for almost every $x\in B_{r/3}$,
\begin{align}
\frac 12\nabla T_\e(x) =& |S(x)|^{-1}\int_{\partial S(x)} \frac{\partial_x \psi}{|\partial_x \psi|}\otimes (y-x) \dd \mathscr H^{d-1}_\mu(y)\\
&-|S(x)|^{-2} \int_{\partial S(x)} \frac{\partial_x \psi}{|\partial_x \psi|} \dd \mathscr H^{d-1}_\mu(y)\otimes \int_{S(x)}(y-x)\dd \mu(y)= I + II.
\end{align}
Fix an $x$ for which the above identity holds. In particular,
\begin{align}
I + II \lesssim L_\infty \frac{|\partial S(x)|}{|S(x)|}\lesssim \frac{L_\infty^d}{|S(x)|}.
\end{align}
To obtain the last inequality, we used that $S(x)$ is a convex set contained in $B_{c L_\infty}(x)$ for some $c>0$ so that $|\partial S(x)|\leq |\partial B_{c L_\infty}(x)|$. Note that due to the analogous estimate for $g_\e$ to \eqref{eq:boundFirstDerivative} , we have
\begin{align}\label{eq:psim1}
|S(x)|\gtrsim \left(\frac{\psi_m}{L_\infty}\right)^d.
\end{align}
On the other hand, \eqref{eq:SchrodingerUsed} gives
\begin{align}\label{eq:psim2}
\e^2 \leq \psi_m |S(x)| \quad \Leftrightarrow \quad |S(x)| \geq \frac{\e^2}{\psi_m}.
\end{align}
Optimising in $\psi_m$ over \eqref{eq:psim1} and \eqref{eq:psim2}, we deduce $|S(x)|\gtrsim L_\infty^{-\frac d {d+1}} \e^\frac{2d}{d+1}$. Thus, we have, due to our choice of $r$ and the definition of $L_\infty$,
\begin{align}
I + II\lesssim \frac{L_\infty^{d+\frac d {d+1}}}{\e^\frac{2d}{d+1}}\lesssim 1 \, .
\end{align}

\step{3. $p=1$} In the case $p=1$, \eqref{eq:Schrodinger} take the form
\begin{align}
f_\e(x) = - \e^2\log \int \exp((g_\e(y)-|x-y|^2)/\e^2)\dd\mu(y) \quad \text{ for } x\in \supp\;\lambda\, ,\\
g_\e(y) = - \e^2\log \int \exp((f_\e(x)-|x-y|^2)/\e^2)\dd\lambda(x)\quad \text{ for } y\in \supp\;\mu.
\end{align}
and it holds
\begin{align}
\frac{\dd\pi_\e}{\dd P} = \exp((f_\e(x)+g_\e(y)-|x-y|^2)/\e^2)
\end{align}
for every $(x,y)\in \supp\;P$.
It is an immediate consequence that $f_\e$ and $g_\e$ are smooth on $\supp\;\lambda$ and $\supp\;\mu$ respectively. We compute for $x\in B_{r/3}$,
\begin{align}
\nabla f_\e (x) = \int (x-y) \frac{\dd \pi_\e}{\dd P}\dd\mu(y). 
\end{align}
Using H\"older's inequality, \cref{prop:LinftyEntropic} and the choice of $r$, it follows that for almost every ${x\in B_{r/3}}$,
\begin{align}\label{eq:firstDerivativeEntropic}
|\nabla f_\e(x)|\leq \left(\int_{|x-y|\geq \Lambda r} |x-y|^2 \frac{\dd\pi_\e}{\dd P} \dd\mu(y)\right)^\frac 1 2+ \left(\int_{|x-y|\leq \Lambda r} |x-y|^2 \frac{\dd\pi_\e}{\dd P} \dd\mu(y)\right)^\frac 1 2\lesssim \e.
\end{align}
Further,
\begin{align}
\nabla^2 f_\e(x) = \e^{-2}\nabla f_\e(x)\otimes \nabla f_\e(x)+\e^{-2} \int (x-y)\otimes (x-y) \frac{\dd\pi}{\dd P}\dd P
\end{align}
and we deduce, using \eqref{eq:firstDerivativeEntropic}, $|\nabla^2 f_\e(x)|\lesssim 1$ for almost every $x\in B_{r/3}$. As $f_\e$ is $C^2$, this completes the proof.
\end{proof}

By essentially the same argument, we obtain regularity in the regime of large regularisation parameter.
\begin{lemma}\label{lem:RegularitylargeEps}
Let $p\in [1,2]$, $h(z)=h_p(z)$ and $\e \geq \e_1>0$. Assume $\lambda,\mu$ are probability measures, bounded and bounded away from $0$ on their support. Denote $\Omega = \supp\;\lambda$. Let $T_\e$ be obtained via \eqref{eq:defTe} from a minimiser $\pi_\e$ of $OT_h(\lambda,\mu)$. Then there is $C>0$ depending only on $d,p, \e_1$ and the size of the support of $\lambda,\mu$ such that
\begin{align}
\esssup[x\in \Omega] |\nabla T_\e(x)|\leq C.
\end{align}
Moreover, if $p<2$, $T_\e \in C^1(\Omega)$ and the essential supremum may be replaced by a supremum.
\end{lemma}
\begin{proof}
The proof follows from calculations similar to those in the proof of \cref{prop:smallScale}.  Hence, we only present the case $p\in (1,3/2)$. Denoting $q=p^\prime$, this means $q\geq 3$. The other cases follow analogously.

By \cref{lem:LipschitzRegularityPotential}, there is $C_1>0$ depending only on the size of the support of $\lambda,\mu$ such that
\begin{align}
\esssup[x\in \Omega]|\nabla f_\e|+\esssup[y\in \supp\;\mu]|\nabla g_\e| \leq C_1.
\end{align}
Further set $L_\infty =\sup_{(x,y)\in \supp\;\pi_\e}|x-y|$. Arguing as in the proof of \cref{prop:smallScale} up to \eqref{eq:finalEstimateSmallScale}, we find for almost every $x\in \Omega$,
\begin{align}
|\nabla T_\e(x)|\lesssim (L_\infty+C_1) \frac{L_\infty^{2d+1}}{\e^{2(d+1)}L_\infty^{-\frac{d(d+1)}{q-1}}}\leq \frac{(L_\infty+C_1)^{2d+2+\frac{d(d+1)}{q-1}}}{\e_1^{2(d+1)}}.
\end{align}
Noting that both $L_\infty$ and $C_1$ may be bounded by a bound only depending on the size of the support of $\lambda$ and $\mu$ this concludes the proof. The moreover statement may be obtained arguing as in the proof of \cref{prop:smallScale}.
\end{proof}

\section{Proof of~\cref{thm:main} and \cref{cor:convergence}}
\label{sec:proofMain}
We now combine the large and small-scale regularity results from~\cref{sec:largescale,sec:smallscale} respectively, to derive a global regularity estimate for $T_\e$.
\begin{proof}[Proof of \cref{thm:main}]
For any $\e_1>0$, there exists $R_0>0$ such that for $R\leq R_0$, $R^{2\alpha}([\lambda]_{\alpha,R}^2+[\mu]_{\alpha,R}^2)< \e_1$, whenever $B_R(x)\subset \Omega$. Moreover, due to the $C^{1,\alpha}$-regularity of $T$, we may find $A$ invertible and $b$ with $|(I+A)^{-1}|+|(I+A)|+|b|\lesssim \|T^{-1}\|_{C^{1,\alpha}(\Omega)}+\|T\|_{C^{1,\alpha}(\Omega)}$ such that for $B_R\subset\Omega$,
\begin{align}\label{eq:approximateInitial}
\int_{B_R} |T(x)-x-Ax-b|^2\dd\lambda(x)\lesssim R^{d+2+2\alpha}.
\end{align}
Consequently,
\begin{align}
\int_{\#_R} |y-x-Ax-b|^2 \dd\pi_\e\lesssim &\int_{\#_R} |y-T(x)|^2 \dd\pi_\e+\int_{\#_R} |T(x)-x-Ax-b|^2\dd\pi_\e(x)\\
=&\int_{\#_R} |y-T(x)|^2 \dd\pi_\e + \int_{B_R} |T(x)-x-Ax-b|^2 \dd\lambda\\
\lesssim& \int_{\#_R} |y-T(x)|^2 \dd\pi_\e + R^{d+2+2\alpha}.
\end{align}
As $T$ is $L$-Lipschitz on $\Omega$ for some $L>0$, combining \cref{lem:malamut} and \cref{lem:eckstein} we find
\begin{align}
\int |y-T(x)|^2 \dd\pi_\e \leq& L \left(\int |y-x|^2 \dd\pi_\e-\int |T(x)-x|^2 \dd\lambda\right)\lesssim \kappa(\e),
\end{align}
where $\kappa(\e)\to 0$ as $\e\to 0$.

Reducing first $R_0$ if necessary, then reducing $\e_1$ if necessary,  we may assume that for any ball  $B_{R_0}(z)$ contained in $\Omega$ there exists some $A$ invertible and $b$ with $|A^{-1}|+|A|+|b|\lesssim \|T^{-1}\|_{C^{1,\alpha}(\Omega)}+\|T\|_{C^{1,\alpha}(\Omega)}$, such that
\begin{align}
\int_{(B_{R_0}(z)\times \R^d )\cup( \R^d \times B_{R_0}(z) )} |y-x-Ax-b|^2 \dd\pi_\e+D_{\lambda,\mu}(z,R_0)\leq \e_1.
\end{align}
Fix any such ball and change coordinates so that $z=0$. Introduce $\tilde \pi_\e = \pi_{\mathsf s}$ with $\mathsf s = (A+\textup{id},b,\left(\frac{\mu(0)}{\lambda(0)}\right)^\frac 1 d,\lambda(0)^{-1})$ and the corresponding map
\begin{align}
\hat T_\e(x) = \int yh''\left(\frac{\dd\tilde\pi_{\e}}{\dd(\lambda_{\mathsf s}\otimes \mu_{\mathsf s})}(x,y)\right)^{-1}\dd\mu_{\mathsf s}(y).
\end{align}
Note that $ \hat T_\e(x)$ enjoys the same regularity and regularity estimates as $ T_\e$ does (up to a constant depending on $|A^{-1}|$, $|A|$ and $|b|$ only). Moreover, $\lambda_{\mathsf s}(0)=\mu_{\mathsf s}(0)=1$. Due to the discussion in \cref{sec:affinechanges}, $\tilde \pi_\e$ is a minimiser of \eqref{problem} with a regularisation parameter bounded by $C\e$ for some $C>0$ between marginals $\lambda_{\mathsf s}, \mu_{\mathsf s}$ and it holds
\begin{align}
E(\tilde \pi_\e,R_0)+D_{\lambda_{\mathsf s},\mu_{\mathsf s}}(R_0)\lesssim \e_1.
\end{align}
Moreover, applying \cref{lem:eckstein}, if $p>1$,
\begin{align}
\int h\left(\frac{\dd\pi_{\mathsf s}}{\dd(\lambda_{\mathsf s}\otimes \mu_{\mathsf s}}\right)\dd(\lambda_{\mathsf s}\otimes \mu_{\mathsf s})\leq C \e^\frac 4 {d(p-1)+2}.
\end{align}
Reducing $\e_1$ further if necessary, this shows that
\begin{align}
\hat E(\tilde \pi_\e,R_0)+D_{\lambda_{\mathsf s},\mu_{\mathsf s}}(R_0)\lesssim \e_1.
\end{align}

In particular, we may apply \cref{thm:largescale} to $\tilde \pi_\e$ and deduce that choosing $\Lambda>0$ sufficiently large, there exist $\tilde A\in \R^{d\times d}$ and $\tilde b\in \R^d$ with 
\begin{align}
|\tilde b|+|\tilde A|\lesssim \e_1
\end{align}
such that with $r= \Lambda \e^\frac 2 {d(p-1)+2}$,
\begin{align}\label{eq:energySmallScale}
\frac 1 {r^{d+2}} \int_{\#_{r}}|y-x-\tilde Ax-\tilde b|^2 \dd\tilde \pi_\e \lesssim 1.
\end{align}

Consider the change of coordinates associated with $\mathsf s=(\tilde A+\textup{id},\tilde b,1,1)$. \cref{sec:affinechanges} shows that then $\pi_{\e,\mathsf s}$ is a minimiser of \eqref{problem} with a regularisation parameter bounded by $C\e$ for some $C>0$ and with marginals $\lambda_{\mathsf s}$ and $\mu_{\mathsf s}$, $\lambda_{\mathsf s}$ that are bounded and bounded away from $0$ on their support. Moreover, regularity and regularity estimates hold for
\begin{align}
T_\e^{\mathsf s} = \int yh''\left(\frac{\dd\pi_{\e,\mathsf s}}{\dd(\lambda_{\mathsf s}\otimes \mu_{\mathsf s})}(x,y)\right)^{-1}\dd\mu_{\mathsf s}(y)
\end{align}
if and only if they hold for $ \tilde T_\e$ (up to a constant depending on $|(\textup{id}+\tilde A)^{-1}|$, $|\tilde A+\textup{id}|$ and $|\tilde b|$ only). Due to \eqref{eq:energySmallScale}, we may apply \cref{prop:smallScale} to $T_\e^{\mathsf s}$, giving the desired Lipschitz estimate in $B_{Cr}$ for some universal $C$ small enough. We can then complete the proof in the case $\e \leq \e_1$ using a covering argument. The case $\e>\e_1$ is covered by \cref{lem:RegularitylargeEps}.
\end{proof}

We conclude this section by proving \cref{cor:convergence}.
\begin{proof}[Proof of \cref{cor:convergence}]
Let $(f_\e,g_\e)$ be the dual potentials for \eqref{problem}. \cref{lem:LipschitzRegularityPotential}  shows that $f_\e,g_\e$ are $L$-Lipschitz on $\supp\;\lambda \times \supp\;\mu$ where $L$ depends only on the size of the support. In order to see this, note that $(f_\e/\e^2,g_\e/\e^2)$ solve the dual problem with cost function $|x-y|^2/\e^2$, which gives a Lipschitz constant of order $1/\e^2$ for $(f_\e/\e^2,g_\e/\e^2)$ in \cref{lem:LipschitzRegularityPotential}. 

Let $x_0\in \supp\;\lambda$, $y_0\in \supp\;\mu$ and normalise by setting $f_\e(x_0)=0$. Then $\{f_\e\},\{g_\e\}$ are uniformly bounded in $\e$. By Arzela--Ascoli, we may extract a non-relabeled subsequence such that $\{f_\e\},\{g_\e\}$ converge uniformly to some limit $(f_\ast,g_\ast)$. 

Let now $(f,g)$ be the Kantorovich potential for the solution of $OT(\lambda,\mu)$. Recall that they solve the problem
\begin{align}\label{eq:dualProblemOT}
\mathcal D = \sup_{(f,g)\in C(\supp\;\lambda)\times C(\supp\;\mu),f(x)+g(y)\leq |x-y|^2} \int f \dd\lambda + \int g\dd\mu.
\end{align}

 Fixing $f(x_0)=0$, $(f,g)$ are unique. We further note that
 \begin{align}\label{eq:dualForm}
Z_\e = c_p\left(\frac{f_\e(x)+g_\e(y)-|x-y|^2}{\e^2}\right)_+^{\frac1 {p-1}}.
\end{align}
is the density with respect to $\lambda\otimes \mu$ of the optimal coupling $\pi_\e$ for \eqref{problem}. We now aim to show that $(f_\ast,g_\ast)$ is an admissible couple for the dual problem of optimal transport. Suppose $f_\ast(x)+g_\ast(y)>|x-y|^2$ for some $(x,y)\in \supp\;\lambda \times \supp\;\mu$. By continuity of $f_\ast,g_\ast,|x-y|^2$, there exists a compact neighbourhood $B$ of $(x,y)$ such that $f_\ast(x)+g_\ast(y)-|x-y|^2 >0$ in $B$. In light of \eqref{eq:dualForm}, $Z_\e\to+\infty$ uniformly on $B$ as $\e\to 0$. Noting $(\lambda\times \mu)(B)>0$, this contradicts $\int Z_\e \dd(\lambda\otimes \mu)=1$.

It remains to prove that $(f_\ast,g_\ast)$ are optimal. By uniqueness of the Kantorovich potentials, it will then follow that $(f_\ast,g_\ast)=(f,g)$. Note that by standard theory $OT(\lambda,\mu) = \mathcal D$. By
the duality theory developed in \cref{sec:regularised_optimal_transport} and using the notation established there, $\mathcal D_\e = OT_h(\lambda,\mu)$. Clearly $OT(\lambda,\mu)\leq OT_h(\lambda,\mu)$. Thus
\begin{align}
\mathcal D \leq& \mathcal D_\e = \int f_\e \dd\lambda + \int g_\e \dd \mu -c_p\e^2 \int \left(\frac{(f_\e(x)+g_\e(y)-|x-y|^2)_+}{\e^2}\right)^{p^\prime}\\
\leq& \int f_\e \dd\lambda + \int g_\e \dd \mu \to \int f_\ast \dd \lambda + \int g_\ast \dd \mu.
\end{align}
Thus $(f_\ast,g_\ast)$ solve \eqref{eq:dualProblemOT}.

Due to \cref{cor:potentials} (note the situation is symmetric in $(x,y)$ so an analogue statement holds for $g_\e$), $\{f_\e\},\{g_\e\}$ are uniformly bounded in $C^{2,1}_{\textup{loc}}(\supp\;\lambda)$ and $C^{2,1}_{\textup{loc}}(\supp\;\mu)$, respectively. As $(f_\e,g_\e)\to (f,g)$ uniformly on $\supp\;\lambda \times \supp\;\mu$, we deduce that $(f_\e,g_\e)\to (f,g)$ in $C^{2,1-}_{\textup{loc}}(\supp\;\lambda)$ and $C^{2,1-}_{\textup{loc}}(\supp\;\mu)$, respectively. Using \eqref{eq:BenamouBrenier} in the form of \cref{lem:concavity} and Brenier's theorem in the form $T(x) = x - \nabla f(x)$, it follows that $T_\e \to T$ in $C^{1,1-}_{\textup{loc}}(\supp\;\lambda)$.
\end{proof}

\appendix
\section{Regularised optimal transport} 
\label{sec:regularised_optimal_transport}
In this section, we prove some results about regularised optimal transport with general regularisations $h$ satisfying \cref{assumptionh}. We start with the following existence result. The arguments in this section closely follow those in \cite{Nutz2025} with the key difference being that we allow the regularisation to be a general convex function. We will always work under the assumption that $h$ satisfies \cref{assumptionh}. Under more restrictive assumptions on the regularisation the results in this section have been obtained in \cite{Bayraktar2025,Gonzalez2025}. For the sake of convenience we drop the dependence on $\e>0$.  We first note that~\eqref{problem} can be reformulated as follows:
\begin{align}
  \mathcal{P}\coloneqq &\,\min_{Z \in \mathcal{Z}}\mathsf{P}(Z)\\\coloneqq&\, \min_{Z \in \mathcal{Z}}\int_{\R^d \times \R^d} |x-y|^2 Z \, \dd P + \int_{\R^d \times \R^d} h(Z) \, \dd P \, ,
\end{align}
where 
\begin{equation}
  \mathcal{Z}\coloneqq \left\{ Z: Z \textrm{ is measurable, } Z\dd P \in \Pi(\lambda,\mu) \right\} \, .
\end{equation}
In what follows, we will use $c$ as a place holder for the quadratic cost $|x-y|^2$.
We refer to $\mathcal{P}$ as the \emph{primal problem}. Associated to $\mathcal{P}$, we have the following dual problem:
\begin{align}
  \mathcal{D}\coloneqq &\,\sup_{f\in L^1(\lambda), g \in L^1(\mu)}\mathsf{D}(f,g) \\\coloneqq&\,\sup_{f\in L^1(\lambda), g \in L^1(\mu)}\int_{\R^d}f \, \dd \lambda + \int_{\R^d}g \, \dd\mu - \int_{\R^d \times \R^d} h^\ast(f+g-c)\, \dd P \, .
\end{align}
We now have the following result.
\begin{proposition}\label{prop:primalExistence}
  Let $\lambda,\mu$ have compact support. Then, there exists a unique minimiser $Z^* \in L^1(\lambda \otimes \mu)$ of the primal problem.  
\end{proposition}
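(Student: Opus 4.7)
The plan is a straightforward application of the direct method of calculus of variations, exploiting strict convexity and superlinearity of $h$.

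First I would establish that the problem is non-degenerate. Since $\lambda,\mu$ are compactly supported with the same mass, the constant $Z_0 \equiv 1$ (which corresponds to the product coupling $\lambda \otimes \mu / \lambda(\R^d)$ up to normalisation) is admissible and yields a finite value of $\mathsf{P}$: the transport term is finite because $|x-y|^2$ is bounded on $\mathrm{supp}\, P$, and the regularisation term is finite because $h$ is continuous and $P$ has finite mass. Together with the fact that $h$ is bounded below (\cref{assumptionh}) and $|x-y|^2 Z \geq 0$ for admissible $Z \geq 0$, this shows $-\infty < \mathcal{P} < \infty$.

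Next I would extract a minimising sequence $Z_n \in \mathcal{Z}$ with $\mathsf{P}(Z_n) \to \mathcal{P}$. The key compactness ingredient is the superlinearity of $h$ at $+\infty$: combined with the lower bound on $h$ and the upper bound $\sup_n \int h(Z_n) \, \mathrm{d}P < \infty$, the de la Vall\'ee Poussin criterion gives uniform integrability of $\{Z_n\}$ with respect to $P$. Since moreover $\int Z_n \, \mathrm{d}P = \lambda(\R^d)$ is fixed and $P$ has finite mass on the compact set $\mathrm{supp}\,\lambda \times \mathrm{supp}\,\mu$, the Dunford--Pettis theorem provides a subsequence (not relabelled) converging weakly in $L^1(P)$ to some $Z^\ast \geq 0$.

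The final step is to verify that $Z^\ast$ is admissible and minimising. Admissibility, i.e. $Z^\ast P \in \Pi(\lambda,\mu)$, follows by testing weak $L^1$-convergence against arbitrary $\varphi \in L^\infty(\lambda)$ viewed as a bounded function on $\R^d \times \R^d$ depending only on $x$: $\int \varphi(x) Z_n \, \mathrm{d}P \to \int \varphi(x) Z^\ast \, \mathrm{d}P$, and the left-hand side equals $\int \varphi \, \mathrm{d}\lambda$ for every $n$; the analogous argument handles the $\mu$-marginal. For the functional, the transport term $Z \mapsto \int |x-y|^2 Z \, \mathrm{d}P$ is linear with bounded integrand on $\mathrm{supp}\,P$, hence weakly $L^1$-continuous, while $Z \mapsto \int h(Z) \, \mathrm{d}P$ is weakly $L^1$-lower semicontinuous by convexity of $h$ (Ioffe's theorem, or equivalently Mazur's lemma applied to the convex functional). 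Hence $\mathsf{P}(Z^\ast) \leq \liminf_n \mathsf{P}(Z_n) = \mathcal{P}$, and $Z^\ast$ is a minimiser.

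Uniqueness is immediate: $\mathcal{Z}$ is convex, the transport term is linear, and $Z \mapsto \int h(Z) \, \mathrm{d}P$ is strictly convex by the strict convexity of $h$, so $\mathsf{P}$ is strictly convex on $\mathcal{Z}$ and the minimiser is unique. No step here is a genuine obstacle; the only point requiring care is ensuring that one can pass the marginal constraints to the weak $L^1$-limit, which is why Dunford--Pettis (not just weak-$\ast$) compactness is used.
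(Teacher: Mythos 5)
Your proposal is correct and follows essentially the same route as the paper: the direct method in $L^1(P)$, with weak compactness of sublevel sets via superlinearity of $h$ (which you spell out through de la Vall\'ee Poussin and Dunford--Pettis), weak lower semicontinuity from convexity, closedness of the marginal constraints under weak $L^1$ convergence, and uniqueness from strict convexity. The paper's proof is terser but relies on precisely the same ingredients; your version just makes the compactness and constraint-passage steps explicit.
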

\begin{proof}
  Note that $\mathcal{Z}$ is closed, convex, and non-empty. Furthermore, the primal function is strongly lower semicontinous in $L^1(\lambda\otimes \mu)$ and thus, by convexity, weakly lower semicontinuous. Since $h$ is superlinear, sublevel sets of the primal functional are weakly compact in $L^1(\lambda\otimes \mu)$. Thus, by the direct method, there exists some minimiser in $\mathcal{Z}$. Furthermore, since $h$ is strictly convex the minimiser is unique, say $Z_*$. It is also clear that $\dd \pi=Z \,\dd(\lambda \otimes \mu)$ is the unique plan minimising~\eqref{problem}.
  \end{proof}

  We now move on to the the following weak duality result.
  \begin{lemma}\label{lem:weakDuality}
  Let $\lambda,\mu$ have compact support.    We have the following results:
    \begin{enumerate}
      \item For all $Z\in \mathcal{Z}$ and $(f,g)\in L^1(\lambda)\times L^1(\mu)$, we have that
      \begin{equation}
        \mathsf{P}(Z)\geq \mathsf{D}(f,g)\, ,
      \end{equation}
      with equality holding if and only if $Z=(h')^{-1}(f+g-c)$ $(\lambda\otimes\mu)$-a.e.
      \item Let $f,g$ be measurable, and $Z\in \mathcal{Z}$ be of the form $(h')^{-1}(f+g-c)$. Then,
      \begin{enumerate}
        \item $\mathsf{P}(Z)<\infty$ and $(f,g)\in L^1(\lambda)\times L^1(\mu)$.
        \item There is no duality gap, i.e. $\mathcal{P}=\mathcal{D}$.
        \item $Z$ and $(f,g)$ are optimal for $\mathsf{P}$ and $\mathsf{D}$, respectively. 
      \end{enumerate}
      \item Conversely, assume that $\mathcal{P}=\mathcal{D}$. Then, if $(f,g)\in L^1(\lambda) \times L^1(\mu)$ is optimal for $\mathcal{D}$ and $Z=(h')^{-1}(f+g-c)\in \mathcal{Z}$, then $Z$ is optimal for $\mathcal{P}$. Moreover
      \begin{align}\label{eq:schrodingerappendix}
      \int Z \,\dd\lambda(x) = 1 \quad \text{ for } \mu-\mathrm{a.e. }\, y,\quad
      \int Z \,\dd\mu(x)=1 \quad \text{ for } \lambda-\mathrm{a.e. }\,x.
      \end{align}
    \end{enumerate}
  \end{lemma}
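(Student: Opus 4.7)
The plan is to deploy the Fenchel--Young inequality $h(s) + h^\ast(t) \geq st$ (for $s \geq 0,\, t \in \R$, with equality iff $t = h'(s)$, equivalently $s = (h^\ast)'(t) = (h')^{-1}(t)$) as the central tool for all three parts, combined with the observation that for $Z \in \mathcal{Z}$ the marginal conditions $Z\,\dd P \in \Pi(\lambda,\mu)$ allow one to rewrite $\int f\,\dd\lambda = \int fZ\,\dd P$ (and likewise for $g$) by Fubini.

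For Part (i), I would substitute these Fubini identities into $\mathsf{P}(Z) - \mathsf{D}(f,g)$ to obtain
\begin{equation}
\mathsf{P}(Z) - \mathsf{D}(f,g) = \int \bigl[h(Z) + h^\ast(f+g-c) - (f+g-c)Z\bigr]\,\dd P,
\end{equation}
whose integrand is pointwise non-negative by Fenchel--Young, with equality iff $Z = (h')^{-1}(f+g-c)$ $(\lambda\otimes\mu)$-a.e. Part (iii) is then nearly immediate: the equality characterisation of Part (i) applied to the given data yields $\mathsf{P}(Z) = \mathsf{D}(f,g) = \mathcal{D} = \mathcal{P}$, so $Z$ is optimal; the Schrödinger equations \eqref{eq:schrodingerappendix} are simply the defining property of $Z\,\dd P \in \Pi(\lambda,\mu)$, i.e. of $Z \in \mathcal{Z}$, which is given by hypothesis.

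Part (ii) is where the technical work concentrates. The Fenchel--Young equality furnishes the pointwise identity $h(Z) + h^\ast(f+g-c) = (f+g)Z - cZ$. Using that $h$ and $h^\ast$ are both bounded below (the latter via $h^\ast(t) \geq 0 \cdot t - h(0) = -h(0)$), and that $cZ \geq 0$ with $c$ bounded on the compact supports so $\int cZ\,\dd P < \infty$, I would conclude that $(f+g)Z$ is pointwise bounded below by a constant. The strategy to upgrade this to integrability is to truncate, setting $f_n \coloneqq (f \wedge n) \vee (-n)$ and similarly for $g_n$, apply the marginal-decoupling identity $\int f_n Z\,\dd P = \int f_n\,\dd\lambda$ (valid since $f_n$ is bounded), and compare against the Fenchel--Young lower bound on the truncated identity; monotone convergence then yields $f \in L^1(\lambda)$, $g \in L^1(\mu)$ and $\mathsf{P}(Z) < \infty$. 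Integrating the pointwise identity and invoking Part (i) then gives $\mathsf{P}(Z) = \mathsf{D}(f,g)$, which combined with weak duality forces both sides to equal $\mathcal{P} = \mathcal{D}$ and $Z,(f,g)$ to be the respective optimisers.

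The main obstacle is precisely this $L^1$-integrability argument: Fenchel--Young gives only a pointwise lower bound on $(f+g-c)Z$, so obtaining a matching upper bound---needed to split $\int(f+g)Z\,\dd P$ into separate integrals against $\lambda$ and $\mu$---requires the truncation-plus-monotone-convergence scheme above, with care taken that the monotone structure is preserved by $Z \geq 0$. Everything else in the lemma follows by straightforward manipulation once this integrability is in hand.
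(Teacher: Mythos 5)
Your proposal follows the same route as the paper: rewrite the gap $\mathsf{P}(Z)-\mathsf{D}(f,g)$ by inserting $\pm\int (f+g)Z\,\dd P$, use the coupling property to split $\int (f+g)Z\,\dd P = \int f\,\dd\lambda + \int g\,\dd\mu$, and then close with the pointwise Fenchel--Young inequality $h(s)+h^\ast(t)\geq st$ (which the paper phrases as ``$x\mapsto xy-h(x)$ has a unique maximum at $(h')^{-1}(y)$''). Parts (ii) and (iii) are handled, in both your write-up and the paper, by the equality case of this inequality, and your reading of the Schr\"odinger equations in part (iii) as just the marginal condition $Z\,\dd P\in\Pi(\lambda,\mu)$ matches the paper's ``immediate from $Z\in\Pi(\lambda,\mu)$''.

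The one place where you go beyond the paper is part (ii): you explicitly flag that when $(f,g)$ are only measurable, one must justify the splitting $\int (f+g)Z\,\dd P = \int f\,\dd\lambda + \int g\,\dd\mu$ and the finiteness of $\mathsf{P}(Z)$, which the paper passes over in a single sentence. Identifying this as the genuine obstacle is correct. However, the truncation scheme you sketch is not yet a proof: monotone convergence requires $f_n Z$ to be monotone, and $f_n := (f\wedge n)\vee(-n)$ is not monotone in $n$ for signed $f$, nor does the factor $Z\geq 0$ restore monotonicity. The phrase ``compare against the Fenchel--Young lower bound on the truncated identity'' also leaves unspecified which identity is being truncated and what uniform bound results. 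A working version of this step needs to treat positive and negative parts separately and use the pointwise lower bound $(f+g)Z\geq m$ together with the marginal conditions $\int Z(x,\cdot)\,\dd\mu = 1$, $\int Z(\cdot,y)\,\dd\lambda = 1$ to show that the negative parts cannot blow up; as it stands your sketch does not establish this. That said, since the paper's own proof does not supply these details either, this is a refinement you would need to make in either case, and the overall structure of your argument is the correct one.
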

\begin{proof}
  For (i), we note that $\mathsf{P}(1)<\infty$. Further,
  \begin{align}
    \mathsf{P}(Z)=&\, \int_{\R^d \times \R^d} (f+g)Z \, \dd(\lambda \otimes \mu)  -\int_{\R^d \times \R^d} (f+g-c)Z-h(Z) \, \dd(\lambda \otimes \mu) \, .
  \end{align}
  Note that the function $x\mapsto xy-h(x)$ has a unique maximum at $(h')^{-1}(y)=(h^\ast)'(y)$. It follows then that
  \begin{equation}
    \mathsf{P}(Z) \geq \mathsf{D}(f,g)\, ,
  \end{equation}
  with equality if and only if $Z=(h')^{-1}(f+g-c)$, $P$-a.e. Consequently, $\mathcal P\geq \mathcal D$. 
 Moreover, using the equality case in this calculation,
  \begin{align}
	\mathcal P\leq \mathsf P(Z) = \mathsf D(f,g)\leq \mathcal D.
  \end{align}  
  Hence (ii) is established. (iii) follows by the same argument. The moreover part is immediate from $Z\in \Pi(\lambda,\mu)$.
\end{proof}

In order to obtain a full duality result, it remains only to construct a solution $(f,g)$ to the dual problem satisfying $Z=(h')^{-1}(f+g-c)\in \mathcal Z$.
\begin{lemma}
There exist $(f,g)\in L^1(\lambda)\times L^1(\mu)$ solving the dual problem $\mathcal D$ such that $Z=(h')^{-1}(f+g-c)\in \mathcal Z$.
\end{lemma}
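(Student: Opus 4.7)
The plan is to extract the dual potentials $(f,g)$ directly from the primal minimiser $Z^\ast$ produced by Proposition~\ref{prop:primalExistence}, via a first-order optimality / Lagrange-multiplier argument. Once such a pair is in hand, Lemma~\ref{lem:weakDuality}(ii) supplies joint optimality and absence of a duality gap, so the task reduces to producing $f \in L^1(\lambda)$, $g \in L^1(\mu)$ with $Z^\ast = (h')^{-1}(f+g-c)$.

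First I would derive the Euler--Lagrange condition for $Z^\ast$. The admissible variations that preserve $Z^\ast \in \mathcal{Z}$ to first order are bounded measurable $\eta(x,y)$ with zero marginals: $\int \eta(x,\cdot)\,\dd\mu = 0$ for $\lambda$-a.e.\ $x$ and $\int \eta(\cdot,y)\,\dd\lambda = 0$ for $\mu$-a.e.\ $y$. Differentiating $\mathsf{P}(Z^\ast + t\eta)$ at $t = 0$ (with one-sided perturbations $(Z^\ast + t\eta)_+$ near $\{Z^\ast = 0\}$) gives the orthogonality relation
\begin{equation*}
\int_{\R^d\times\R^d}\bigl(c(x,y) + h'(Z^\ast)(x,y)\bigr)\,\eta(x,y)\,\dd P = 0\,.
\end{equation*}

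Next I would carry out a separation-of-variables step. Setting $U := c + h'(Z^\ast)$ on $\{Z^\ast > 0\}$, the orthogonality above says that $U$ annihilates every zero-marginal bounded test function. A standard Fubini/Hahn--Banach argument then provides measurable $f \colon \R^d \to \R$ and $g \colon \R^d \to \R$ with $U(x,y) = f(x) + g(y)$ almost everywhere, i.e.\ $Z^\ast = (h')^{-1}(f+g-c)$. The additive-constant ambiguity $(f,g) \mapsto (f+\alpha, g-\alpha)$ is fixed by a normalisation, e.g.\ $\int f\,\dd\lambda = \tfrac12 \mathsf{P}(Z^\ast)$.

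Then I would verify $L^1$-integrability. Compact support of $\lambda, \mu$ together with the marginal identities $\int Z^\ast \, \dd\mu = 1$ for $\lambda$-a.e.\ $x$ and $\int Z^\ast \, \dd\lambda = 1$ for $\mu$-a.e.\ $y$ (as in~\eqref{eq:schrodingerappendix}) translate, via Jensen's inequality applied to the convex conjugate $h^\ast$, into uniform bounds of the form $\int f\,\dd\lambda + \int g\,\dd\mu \leq \mathsf{P}(Z^\ast) + C$, with $C$ depending on $|\supp\lambda|$ and $|\supp\mu|$. Combined with the chosen normalisation, this yields $f \in L^1(\lambda)$ and $g \in L^1(\mu)$. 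Lemma~\ref{lem:weakDuality}(ii) then closes the argument.

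The main obstacle lies in the separation-of-variables step: promoting the orthogonality of $U$ to a measurable, pointwise decomposition $U(x,y) = f(x) + g(y)$ on the support of $Z^\ast$. The subtlety is the boundary of $\{Z^\ast > 0\}$, where $h'$ behaves very differently across the range of $h$ considered: for entropic regularisation ($h = h_1$), $h'(z) \to -\infty$ as $z \to 0^+$ but $Z^\ast > 0$ almost everywhere, so the decomposition holds on the whole space; for subquadratic polynomial $h = h_p$, $h'(0) = 0$ is finite while $Z^\ast$ vanishes on a set of positive measure, and one must extend the identity consistently to $\{Z^\ast = 0\}$ (with the complementary slackness $f(x)+g(y) \leq c(x,y)$ on this set) and ensure the chosen representatives of $f, g$ are measurable.
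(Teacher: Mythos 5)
Your route --- extracting Lagrange multipliers from the first-order optimality conditions of the primal minimiser $Z^\ast$ and then separating $U := c + h'(Z^\ast)$ into $f(x) + g(y)$ --- is genuinely different from the paper's, but the separation-of-variables step that you yourself flag as ``the main obstacle'' is an unresolved gap, not a subtlety you then handle. The orthogonality relation $\int U\,\eta\,\dd P = 0$ for all bounded zero-marginal $\eta$ only places $U$ in the $L^1(P)$-\emph{closure} of the sum space $\{f(x)+g(y) : f\in L^1(\lambda),\, g\in L^1(\mu)\}$, and this closure strictly contains the sum space in general: there is no ``standard Fubini/Hahn--Banach argument'' that upgrades annihilation of zero-marginal test functions to a pointwise additive decomposition with $f\in L^1(\lambda)$, $g\in L^1(\mu)$. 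Closedness of sum spaces in $L^1$ is exactly what makes dual attainment for Schr\"odinger-type problems nontrivial, and no ingredient in your proposal addresses it. Two further weak points: on $\{Z^\ast=0\}$ the admissible variation is one-sided, so the Euler--Lagrange relation degenerates to an inequality there and the orthogonality argument as stated does not apply off $\{Z^\ast>0\}$; and the Jensen-type bound you invoke controls only the signed quantity $\int f\,\dd\lambda + \int g\,\dd\mu$, which does not give $\|f\|_{L^1(\lambda)}$ and $\|g\|_{L^1(\mu)}$ separately, since $f$ and $g$ can take both signs.

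The paper's proof sidesteps all of this by running a finite-dimensional relaxation. For each finite family $\Phi_n$ of marginal test functions, the constrained minimiser $Z_n$ has the explicit decomposable form $(h')^{-1}(b_0^\ast + \langle b^\ast,\Phi_n\rangle - c)$ because the dual problem lives in $\R^{n+1}$, where coercivity and attainment are elementary (via \cite[Lemma 3.5]{Follmer1988}). Decomposability is therefore automatic at each finite stage. The limit $n\to\infty$ is then taken using monotonicity of the values, equi-integrability from superlinearity of $h$, Dunford--Pettis, and finally a compactness/stability result \cite[Proposition 5.8]{Nutz} that is engineered precisely to produce a decomposable weak $L^1$-limit with potentials $(f,g)\in L^1(\lambda)\times L^1(\mu)$. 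If you want to rescue your direct approach, you would need to supply, as a separate lemma, the closedness or decomposability statement you are implicitly assuming --- and that lemma is, in substance, what the cited result of Nutz provides.
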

\begin{proof}
We consider the approximate problem: Fix $n\in \N$ and bounded measurable functions $\Phi =\{\phi_i\colon \R^d\times \R^d \to \R\}_{i=1,\ldots,n}$ with $\int_{\R^{d}\times \R^{d}} \phi_i \dd P = 0$. Consider
\begin{align}\label{eq:problemApproximate}
\inf_{Z\in \mathcal Z_n} \int c Z \,\dd P + \int h(Z) \,\dd P,
\end{align}
where 
\begin{align}
\mathcal Z_n = \{Z\colon Z \text{ is measurable}, \int_{\R^{d}\times \R^{d}} Z \dd P = 1, \int_{\R^{d}\times \R^{d}} \phi_i Z \dd P = 0, 1\leq i \leq n\}.
\end{align}
We claim that there exists a unique solution to \eqref{eq:problemApproximate} and that it takes the form $Z_n = (h')^{-1}(b_0+\sum_i b_i \phi_i-c)$ for some $b_i\in \R$, $0\leq i\leq n$.

Noting that $\mathcal Z_n$ is convex, closed and non-empty, existence and uniqueness follows analogously to the proof of \cref{prop:primalExistence}. Arguing as in the proof of \cref{lem:weakDuality}, we find
\begin{align}
\int_{\R^{d}\times \R^{d}} c Z \dd P + \int_{\R^{d}\times \R^{d}} h(Z)\dd P \geq b_0 +  \int_{\R^{d}\times \R^{d}} h^\ast(b_0+\langle b,\Phi\rangle-c)\dd P,
\end{align}
where $b_0\in \R$, $b=(b_1,\ldots, b_n)\in \R^n$ and $\Phi = (\phi_1,\ldots, \phi_n)$ with equality if and only if $Z=(h')^{-1}(b_0+b\cdot \Phi-c)$ for some $b_0\in \R$ and $b\in \R^n$. Consequently, it suffices to show that there exists $Z\in \mathcal Z_n$ of the form $Z=(h')^{-1}(b_0+b\cdot \Phi-c)$.

To this end, we first consider the problem
\begin{align}
\inf_{(b_0,b)\in \R^{n+1}} G(b_0,b),\quad G(b_0,b)\coloneqq -b_0 + \int h^\ast(b_0+\langle b,\Phi\rangle-c)\dd P.
\end{align}
We aim to show the existence of a minimiser $\mathbf b^\ast = (b_0^\ast,b^\ast)$.

As $h$ is continuous and real-valued, $h^\ast$ is continuous. Consequently, $G\colon \R^{n+1}\to \R$ is convex and continuous. Projecting onto the orthogonal complement of $\{b\in \R^n\colon \langle b,\Phi\rangle = 0,\; P-\text{a.s.}\}$, we may reduce to assuming that $\langle b,\Phi\rangle =0$, $P$-almost surely only for $b=0$. We claim that then $P(\{\langle b,\Phi\rangle>0\})>0$ for $b\neq 0$. If this is not the case for some $b\neq 0$, it must hold that $P(\{\langle b,\Phi\rangle <0\})>0$. In particular, then
\begin{align}
0=\int \langle b,\Phi\rangle \dd P= \int_{\langle b,\Phi\rangle <0} \langle b,\Phi\rangle \dd P<0.
\end{align}
 
As $h$ is bounded below, $\lim_{t\to+\infty} h^\ast(t)=+\infty$. Using that $P(\{\langle b,\Phi\rangle\}>0)>0$ for any $b\neq 0$, we deduce for any $\mathbf b\neq 0$,
\begin{align}
\lim_{\alpha\to+\infty} G(\alpha \mathbf b)=+\infty.
\end{align}
Consequently, applying \cite[Lemma 3.5]{Follmer1988}, we obtain the existence of a minimiser $\mathbf b^\ast$.

Let $\mathbf b^\ast=(b_0^\ast,b^\ast)$ be a minimiser of $G$. As $h$ is strictly convex and $C^1$ on $(0,+\infty)$, $h^\ast$ is $C^1$ on the range of $(h^\ast)'=( h')^{-1}$. Recalling that $\Phi$ is bounded, differentiating with respect to $b_i$ yields
\begin{align}\label{eq:conditionPhi}
\int (h')^{-1}(b_0^\ast+\langle b^\ast,\Phi\rangle -c)\dd P =1,\; \int \phi_i (h')^{-1}(b_0^\ast+\langle b^\ast,\Phi\rangle-c)\dd P = 0,\; 1\leq i\leq n.
\end{align}
In particular $(h')^{-1}(b_0^\ast+\langle b^\ast,\Phi\rangle-c)\in \mathcal Z_n$.

We now aim to find measurable functions $f_n,g_n\colon \R^d\to \R$ such that
\begin{gather}
 (h')^{-1}(f_n(x)+g_n(y)-c) \stackrel{n \to \infty}{\rightharpoonup} Z_\ast \quad \text{ weakly in } L^1(P)
\end{gather}
for some $Z_\ast$.
Applying \cite[Proposition 5.8]{Nutz} then concludes the proof, once we note that \eqref{eq:conditionPhi} combined with the above convergence, proves that $Z_\ast \dd P\in \Pi(\lambda,\mu)$.

We begin by noting that there are countably many, bounded measurable functions $\phi_k$ such that $\rho\in \mathscr P(\R^d\times \R^d)$ has first marginal $\lambda$ if and only if it holds that $\int \phi_k(x) \dd \rho(x,y)=0$ for all $k\geq 1$. Consider now $\Phi_n = \{\phi_i\colon 1\leq i \leq n\}=\{\phi_k(x),\phi_k(y)\colon k\geq 1\}$. Then $\rho\in \Pi(\lambda,\mu)$ if and only if $\int \phi_i \dd \rho = 0$ for all $i\geq 1$. Consider now the minimiser $Z_n$ of \eqref{eq:problemApproximate} in $\mathcal Z_n$ with $\Phi=\Phi_n$. Note that $Z_n$ is of the desired form. As $h$ is super-linear at infinity, it suffices to establish that $\int h(Z_n)\dd P \to \int h(Z)\dd P$.

Note that $1\in \mathcal Z_{n+1}\subset \mathcal Z_n$ for all $n\geq 1$. In particular, by construction, $\int (c Z_n+h(Z_n)) \dd P$ is a non-decreasing, bounded sequence and
\begin{align}
\lim_{n\to\infty} \int (c Z_n + h(Z_n)) \dd P\leq \mathcal D<\infty.
\end{align}
As $h$ is super-linear at infinity, we deduce that $\{Z_n\}$ is equi-integrable. Consequently, by Dunford--Pettis, up to picking a non-relabeled subsequence, $Z_n \rightharpoonup Z_\ast$ weakly in $L^1(P)$ for some $Z_\ast$.  This concludes the proof.
\end{proof}

In order to establish uniqueness of the potentials up to constant, we note the following continuity result. This additionally allows to find representatives such that \eqref{eq:schrodingerappendix} hold for every $(x,y)\in \supp\;\lambda\cap \supp\;\mu$.
\begin{lemma}\label{lem:LipschitzRegularityPotential}
Let $\lambda,\mu$ have compact support. Then the optimal potentials $(f,g)\in L^1(\lambda)\times L^1(\mu)$ have a Lipschitz extension to $\R^d$, with a Lipschitz constant $L$ that only depends on the size of the support of $\lambda$ and $\mu$. The extension to $\supp\;\lambda\times \supp\;\mu$ is unique.
\end{lemma}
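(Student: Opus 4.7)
The plan is to exploit the first-order optimality conditions \eqref{eq:schrodingerappendix}, which for optimal $(f,g)$ with $Z=(h')^{-1}(f+g-c)$ read
\begin{equation*}
\int (h')^{-1}(f(x)+g(y)-|x-y|^2)\,\dd\mu(y)=1\quad\textrm{for $\lambda$-a.e.\ }x,
\end{equation*}
together with the analogous identity obtained by swapping the roles of $(\lambda,\mu)$. Fix measurable representatives of $f,g$ for which these identities hold, and let $A\subseteq\supp\;\lambda$ denote the $\lambda$-full-measure (hence dense) subset of $x$ at which the first identity holds. For $x,x'\in A$, subtracting the two identities yields
\begin{equation*}
\int\bigl[(h')^{-1}(f(x)+g(y)-|x-y|^2)-(h')^{-1}(f(x')+g(y)-|x'-y|^2)\bigr]\,\dd\mu(y)=0.
\end{equation*}
Since $(h')^{-1}$ is strictly increasing, the integrand cannot be $\mu$-a.e.\ strictly positive, so there exists $y_0\in\supp\;\mu$ at which
\begin{equation*}
f(x)+g(y_0)-|x-y_0|^2\leq f(x')+g(y_0)-|x'-y_0|^2.
\end{equation*}

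Rearranging and using $|x-y_0|^2-|x'-y_0|^2=\langle x-x',\,x+x'-2y_0\rangle$ together with the compactness of $\supp\;\lambda$ and $\supp\;\mu$ yields $f(x)-f(x')\leq L|x-x'|$, where $L\coloneqq 2\sup_{\supp\;\lambda}|\cdot|+2\sup_{\supp\;\mu}|\cdot|$ depends only on the sizes of the supports. Swapping $x$ and $x'$ produces the opposite inequality, so $f$ is $L$-Lipschitz on the dense subset $A\subseteq\supp\;\lambda$. The same argument applied to the second identity in \eqref{eq:schrodingerappendix} gives an $L$-Lipschitz estimate for $g$ on a dense subset of $\supp\;\mu$.

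By uniform continuity, $f$ and $g$ extend uniquely as $L$-Lipschitz functions to the closed sets $\supp\;\lambda$ and $\supp\;\mu$ respectively; uniqueness of this extension is the standard observation that two continuous functions agreeing on a dense subset of a closed set agree everywhere on it. A further Lipschitz extension to all of $\R^d$ is then provided by McShane's theorem. The only mildly delicate point is the selection of $y_0\in\supp\;\mu$: if the integrand above vanishes $\mu$-a.e., strict monotonicity of $(h')^{-1}$ forces $f(x)-f(x')=|x-y|^2-|x'-y|^2$ for $\mu$-a.e.\ $y$, and any such $y_0\in\supp\;\mu$ works directly; otherwise, vanishing of the integral forces a subset of positive $\mu$-measure on which the integrand is $\leq 0$, again producing a suitable $y_0\in\supp\;\mu$. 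Beyond this, the proof is routine.
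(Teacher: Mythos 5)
Your proof is correct and rests on the same two pillars as the paper's: the Schr\"odinger identities \eqref{eq:schrodingerappendix} and the strict monotonicity of $(h')^{-1}$. Where the two arguments diverge is in the technique used to compare $f(x)$ and $f(x')$. The paper defines $\Delta(x,x')=\sup_{y\in\supp\;\mu}\bigl||x-y|^2-|x'-y|^2\bigr|$, observes that the Schr\"odinger map $t\mapsto\int(h')^{-1}(t+g(y)-|x'-y|^2)\,\dd\mu(y)$ is strictly increasing in $t$, and squeezes $f(x')$ into the interval $[f(x)-\Delta,f(x)+\Delta]$ by evaluating this map at the endpoints. You instead subtract the two Schr\"odinger identities directly, note that a $\mu$-integral that vanishes cannot have a $\mu$-a.e.\ strictly positive integrand, and extract a witness $y_0\in\supp\;\mu$ at which the pointwise comparison goes your way; the $g(y_0)$ term then cancels and compactness of the supports gives the Lipschitz constant. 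Both routes yield the same bound $L\lesssim\sup_{\supp\lambda}|\cdot|+\sup_{\supp\mu}|\cdot|$, and both then pass from a full-measure (hence dense) subset of $\supp\;\lambda$ to the whole support by uniform continuity and finally to $\R^d$ via McShane. The paper's squeeze argument is somewhat more systematic and avoids the case distinction in your ``mildly delicate point,'' which is in fact already subsumed by your observation that the integrand cannot be $\mu$-a.e.\ strictly positive; your witness-finding argument is a little leaner, at the small cost of this redundancy.
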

\begin{proof}
For all $x,x'\in X_0\subset \supp\;\lambda$, where $X_0$ has full $\lambda$-measure, we find with $\Delta=\Delta(x,x')\coloneqq \sup_{y\in \supp\;\mu} ||x-y|^2-|x'-y|^2|$, using the monotonicity of $(h')^{-1}$,
\begin{align}
&\int (h')^{-1}(f(x)+g(y)-|x'-y|^2+ \Delta) \dd\mu(y) \\
\geq& \int (h')^{-1}(f(x)+g(y)-|x-y|^2)\dd\mu(y) = 1.
\end{align}
Similarly
\begin{align}
&\int (h')^{-1}(f(x)+g(y)-|x'-y|^2- \Delta) \dd\mu(y) \\
\leq& \int (h')^{-1}(f(x)+g(y)-|x-y|^2)\dd\mu(y) = 1.
\end{align}
Noting 
\begin{align}
1 = \int(h')^{-1}(f(x')+g(y)-|x'-y|^2)\dd\mu(y),
\end{align}
we deduce that $f(x')\in [f(x)-\Delta(x,x'),f(x)+\Delta(x,x')]$ and consequently,
\begin{align}
|f(x)-f(x')|\leq | |x-y|^2-|x'-y|^2| \lesssim |x-x'|,
\end{align}
where the implicit constant depends only on the size of the support of $\lambda$ and $\mu$.
By McShane's theorem, $f$ has a Lipschitz extension to $\R^d$. Note moreover that the Lipschitz extension to $\overline{X_0}\supset \supp\;\lambda$ is unique.
The argument for $g$ is analogous.
\end{proof}
We now proceed to prove uniqueness of the potentials.

\begin{lemma}
Assume there is $D>0$ such that
\begin{align}\label{eq:measureLower}
\sup_{B_r(z)\subset \supp\;\lambda} \frac{\lambda(B_r(z))}{|B_r(z)|}\geq D.
\end{align}
and assume $\textup{int}\;\supp\;\lambda$ is connected.
Then the solution $(f,g)\in L^1(\lambda)\times L^1(\mu)$ is unique up to constant.
\end{lemma}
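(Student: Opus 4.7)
The plan is to propagate the a.e.\ equality of two optimal potential pairs into a pointwise equality on an open set, then use a decoupling trick combined with connectedness to conclude. Let $(f_1,g_1), (f_2,g_2) \in L^1(\lambda)\times L^1(\mu)$ be two optimal pairs for $\mathcal{D}$, and fix their Lipschitz-continuous representatives provided by \cref{lem:LipschitzRegularityPotential}. Write $\psi_i(x,y) := f_i(x)+g_i(y)-c(x,y)$. By \cref{prop:primalExistence} the primal minimiser $Z_\ast$ is unique, and the equality clause of \cref{lem:weakDuality}(i) gives $(h')^{-1}((\psi_i)_+) = Z_\ast$ $(\lambda\otimes\mu)$-a.e.\ for $i=1,2$. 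Since $(h')^{-1}$ is strictly monotone on its image, this forces $\psi_1 = \psi_2$ a.e.\ on $\{Z_\ast > 0\}$. The density lower bound on $\lambda$ (together with $\mu$ having full support on its own support) ensures that nonempty relatively-open subsets of $\supp\;\lambda\times\supp\;\mu$ carry positive $\lambda\otimes\mu$-mass, so by continuity of $\psi_i$ this a.e.\ identity upgrades to the pointwise identity $\psi_1 \equiv \psi_2$ on the open set $\Omega_+ := \{\psi_1>0\}\cap(\supp\;\lambda\times\supp\;\mu)$.

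Next, introduce $F(x) := f_1(x)-f_2(x)$ on $\supp\;\lambda$ and $G(y) := g_2(y)-g_1(y)$ on $\supp\;\mu$, so that the previous step reads $F(x) = G(y)$ on $\Omega_+$. For any fixed $x_0 \in \tp{int}\;\supp\;\lambda$, the Schr\"odinger identity \eqref{eq:schrodingerappendix} combined with the continuity of $\psi_1$ extends from ``$\lambda$-a.e.'' to every $x_0\in \supp\;\lambda$, using that the map $x \mapsto \int (h')^{-1}(\psi_1(x,y)_+)\,\dd\mu(y)$ is continuous on $\supp\;\lambda$ (by dominated convergence and the Lipschitz bounds) and that the density lower bound makes $\lambda$-full-measure sets dense in $\supp\;\lambda$. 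Consequently there exists $y_0\in\supp\;\mu$ with $\psi_1(x_0,y_0)>0$, and by continuity one finds a product neighbourhood $U\times V \subset \Omega_+$ of $(x_0,y_0)$. On $U\times V$ the identity $F(x)=G(y)$ decouples: the left-hand side depends only on $x$, the right only on $y$, so both sides must equal a common constant. In particular $F$ is locally constant around every point of $\tp{int}\;\supp\;\lambda$.

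Connectedness of $\tp{int}\;\supp\;\lambda$ then promotes ``locally constant'' to ``constant'', so $F \equiv k$ on $\tp{int}\;\supp\;\lambda$ for some $k\in\R$; continuity of $F$ and the identity $\supp\;\lambda = \overline{\tp{int}\;\supp\;\lambda}$ (a consequence of the density lower bound) extend this to $\supp\;\lambda$. The decoupling identity on any product neighbourhood $U\times V$ inside $\Omega_+$ then forces $G \equiv k$ on $V\cap \supp\;\mu$, and the analogous argument with roles reversed shows $G \equiv k$ on $\supp\;\mu$. Hence $(f_1,g_1) = (f_2+k, g_2-k)$, proving uniqueness up to the admissible additive constant. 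The main technical obstacle I anticipate is the promotion of the a.e.\ identity $\psi_1=\psi_2$ to a genuinely pointwise one on an open product set, since this is where the density lower bound, the continuous-representative Lipschitz regularity of \cref{lem:LipschitzRegularityPotential}, and the care needed to dodge the degenerate case where $(h')^{-1}$ is not strictly monotone (near $h'(0)$ when $h'(0)>-\infty$) all come together; the connectedness argument itself is then a standard local-to-global upgrade.
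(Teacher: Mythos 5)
Your proof is correct and takes a genuinely different route from the paper. The paper identifies $\nabla f$ pointwise a.e.\ by differentiating the identity $Z=(h')^{-1}(f+g-c)$ in $x$ on the open set $E=\{Z>0\}$, invoking Rademacher's theorem, and then passes from a.e.\ uniqueness of the Lipschitz gradient on the connected open set $\mathrm{int}\;\supp\lambda$ to uniqueness of $f$ up to constant. You instead compare two optimal dual pairs directly: strict monotonicity of $(h')^{-1}$ forces $\psi_1=\psi_2$ a.e.\ on $\{Z_*>0\}$, continuity upgrades this to a relatively open set, and the decoupled identity $F(x)=G(y)$ on a product neighbourhood makes $F$ locally (and then, by connectedness, globally) constant. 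Your approach is more elementary --- it avoids differentiating the density and Rademacher's theorem altogether, and in fact does not even use $h''$ --- while the paper's gradient-based argument is closer to what one would do for non-quadratic costs where the optimality conditions are naturally expressed through $\nabla f$.

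Two points deserve tightening. First, you should work with $\Omega_+:=\{\psi_1>h'(0)\}\cap(\supp\lambda\times\supp\mu)$ (equivalently $\{Z_*>0\}$ intersected with the product support), not $\{\psi_1>0\}$: the lemma is stated only under~\cref{assumptionh}, so $h'(0)$ may be any value in $[-\infty,\infty)$, and $\{\psi_1>0\}$ could be empty or too small when $h'(0)\neq 0$. This is a purely notational fix and your argument otherwise goes through unchanged. Second, the phrase ``the analogous argument with roles reversed'' for propagating $G\equiv k$ over $\supp\mu$ is slightly misleading, since neither a density lower bound for $\mu$ nor connectedness of $\mathrm{int}\;\supp\mu$ is assumed; but none is needed. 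Once $F\equiv k$ is established on all of $\supp\lambda$, the identity $F(x)=G(y)$ on $\Omega_+$ directly gives $G\equiv k$ on $\Pi_y(\Omega_+)$, which (by the Schr\"odinger identity~\eqref{eq:schrodingerappendix} applied in the $y$-variable and continuity of $G$) is dense in $\supp\mu$, and hence $G\equiv k$ on $\supp\mu$. I would state this direct propagation rather than claim a symmetric argument you cannot actually run.
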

\begin{proof}
Apply \cref{lem:LipschitzRegularityPotential} to obtain Lipschitz representatives of $(f,g)$ on $\R^d$, still denoted $(f,g)$. The representatives are uniquely determined from $(f,g)$ on $\supp\;\lambda \times \supp\;\mu$. Set $Z=(h')^{-1}(f(x)+g(y)-|x-y|^2)$. This is a version of the density of the optimal coupling $\pi_\ast$. Note that by the strict monotonicity of $h'$,
\begin{align}
Z>0 \quad \Leftrightarrow\;  h'(0)< f(x)+g(y)-|x-y|^2.
\end{align}
As $f,g$ are Lipschitz, we deduce that $E=\{Z>0\}$ is open and $\pi_\ast(E)=1$. Consider the projection onto the $x$-component $\Pi_x(E)$. Note that
\begin{align}
\lambda(\Pi_x(E)) = \pi_\ast(E)=1,\quad \supp\;\lambda =\overline{\Pi_x(E)}.
\end{align}
We deduce that since $\pi_\ast(E)$ is open, $\lambda(\textup{int}\;\supp\;\lambda)=1$.
Using \eqref{eq:measureLower}, we deduce that $\Pi_x(E)$ is a set of full  Lebesgue measure. By Rademacher's theorem, $f$ is differentiable on $\textup{dom}\;\nabla f$, a set of full Lebesgue measure. We set $\Lambda = \textup{dom}\;\nabla f \cap \Pi_x(E)\subset \supp\;\lambda$, a set of full Lebesgue measure on $\supp\;\lambda$.

We claim that $\nabla f$ is uniquely determined on $\Lambda$. Fix $x_0\in \Lambda$. Then $(x_0,y_0)\in E$ for some $y_0$. We note
\begin{align}
0<Z(x,y) = (h')^{-1}(f(x)+g(y)-|x-y|)\quad \forall (x,y)\in B_r(x_0,y_0)
\end{align}
for some $r>0$ as $E$ is open. Differentiating and noting $(h')^{-1}=(h^\ast)'$, we obtain
\begin{align}
\nabla_x Z(x_0,y_0) = (h^\ast)''(f(x_0)+g(y_0)-|x_0-y_0|)(\nabla f(x_0)-(x_0-y_0)).
\end{align}
Note $(h^\ast)''(f(x_0)+g(y_0))-|x_0-y_0|) = h''(Z(x_0,y_0))>0$, so that
\begin{align}
\nabla f(x_0) = \frac{\nabla_x Z(x_0,y_0)}{h''(Z(x_0,y_0))}+(x_0-y_0).
\end{align}
The right-hand side is uniquely determined as $\pi_\ast$ is unique. Consequently, $f$ is a Lipschitz function, whose gradient is uniquely determined almost everywhere on the  open and connected set $\text{int}\;\supp\;\lambda$. This implies that $f$ is uniquely determined up to constant on $\textup{int}\;\supp\;\lambda$, which completes the proof as this set has full $\lambda$-measure.
\end{proof}

\bibliographystyle{amsalpha}

\end{document}